\numberwithin{equation}{section}
\newtheorem{Theorem}{Theorem}[section]
\newtheorem*{Theorem*}{Theorem}
\newtheorem{Corollary}[Theorem]{Corollary}
\newtheorem{Lemma}[Theorem]{Lemma}
\newtheorem{Proposition}[Theorem]{Proposition}
 { \theoremstyle{definition}
\newtheorem{Definition}[Theorem]{Definition}

\newtheorem{Notation}[Theorem]{Notation}
\newtheorem{Example}[Theorem]{Example}
\newtheorem{Remark}[Theorem]{Remark} }
\newcommand{\Set}[1]{\left\lbrace #1\right\rbrace}
\newcommand{\ov}[1]{\overline{#1}}
\newcommand{\bimod}[1]{#1\text{-}\mathsf{Bimod}}
\newcommand{\lmod}[1]{#1\text{-}\mathsf{Mod}}
\newcommand{\lcomod}[1]{#1\text{-}\mathsf{Comod}}
\newcommand{\Aut}{\operatorname{Aut}}
\newcommand{\cha}{\operatorname{char}}
\newcommand{\coev}{\mathsf{coev}}
\newcommand{\coevr}{\widetilde{\mathsf{coev}}}
\newcommand{\deri}{{\rm d}}
\newcommand{\ev}{\mathsf{ev}}
\newcommand{\evr}{\widetilde{\mathsf{ev}}}
\newcommand{\End}{\mathsf{End}}
\newcommand{\Ext}{\operatorname{Ext}}
\newcommand{\Hom}{\mathsf{Hom}}
\newcommand{\wF}{\widetilde{F}}
\newcommand{\ide}{\mathsf{Id}}
\newcommand{\Img}{\operatorname{Im}}
\newcommand{\isomorph}{\stackrel{\sim}{\to}}
\newcommand{\Ob}{\mathsf{Ob}}
\newcommand{\one}{\mathds{1}}
\newcommand{\rev}{\mathrm{rev}}
\newcommand{\Tot}{\mathrm{Tot}}
\newcommand{\Alg}{\mathsf{Alg}}
\newcommand{\CoAlg}{\mathsf{CoAlg}}
\newcommand{\ComAlg}{\mathsf{ComAlg}}
\newcommand{\FrobAlg}{\mathsf{FrobAlg}}
\newcommand{\FPdim}{\mathsf{FPdim}}
\renewcommand{\dim}{\mathsf{dim}}
\newcommand{\Fun}{\mathsf{Fun}}
\newcommand{\Rep}{\mathsf{Rep}}
\newcommand{\locmod}{\mathsf{Rep}^{\mathsf{loc}}}
\newcommand{\Vect}{\mathsf{Vect}}
\newcommand{\mC}{\mathbb{C}}
\newcommand{\mZ}{\mathbb{Z}}
\newcommand{\cC}{\mathcal{C}}
\newcommand{\cD}{\mathcal{D}}
\newcommand{\cZ}{\mathcal{Z}}
\newcommand{\rI}{I}
\begin{document}
\allowdisplaybreaks

\newcommand{\arXivNumber}{2303.04493}

\renewcommand{\PaperNumber}{075}

\FirstPageHeading

\ShortArticleName{Frobenius Monoidal Functors of Dijkgraaf--Witten Categories}

\ArticleName{Frobenius Monoidal Functors of Dijkgraaf--Witten\\ Categories and Rigid Frobenius Algebras}

\Author{Samuel HANNAH~$^{\rm a}$, Robert LAUGWITZ~$^{\rm b}$ and Ana ROS CAMACHO~$^{\rm a}$}
\AuthorNameForHeading{S.~Hannah, R.~Laugwitz and A.~Ros~Camacho}

\Address{$^{\rm a)}$~School of Mathematics, Cardiff University, Abacws, Senghennydd Road,\\
\hphantom{$^{\rm a)}$}~Cardiff, CF24 4AG, Wales, UK}
\EmailD{\href{mailto:hannahs@cardiff.ac.uk}{hannahs@cardiff.ac.uk}, \href{mailto:roscamachoa@cardiff.ac.uk}{roscamachoa@cardiff.ac.uk}}

\Address{$^{\rm b)}$~School of Mathematical Sciences, University of Nottingham, University Park,\\
\hphantom{$^{\rm b)}$}~Nottingham, NG7 2RD, UK}
\EmailD{\href{mailto:robert.laugwitz@nottingham.ac.uk}{robert.laugwitz@nottingham.ac.uk}}

\ArticleDates{Received March 16, 2023, in final form September 26, 2023; Published online October 12, 2023}

\Abstract{We construct a separable Frobenius monoidal functor from \smash{$\cZ\big(\Vect_H^{\omega|_H}\big)$} to $\cZ\big(\Vect_G^\omega\big)$ for any subgroup $H$ of $G$ which preserves braiding and ribbon structure. As an application, we classify rigid Frobenius algebras in $\cZ\big(\Vect_G^\omega\big)$, recovering the classification of \'etale algebras in these categories by Davydov--Simmons [\textit{J.~Algebra} \textbf{471} (2017), 149--175, arXiv:1603.04650] and generalizing their classification to algebraically closed fields of arbitrary characteristic. Categories of local modules over such algebras are modular tensor categories by results of Kirillov--Ostrik [\textit{Adv. Math.} \textbf{171} (2002), 183--227, arXiv:math.QA/0101219] in the semisimple case and Laugwitz--Walton [\textit{Comm. Math. Phys.}, {t}o appear, arXiv:2202.08644] in the general case.}

\Keywords{Frobenius monoidal functor; Frobenius algebra; Dijkgraaf--Witten category; local module; modular tensor category; \'etale algebra}

\Classification{18M20; 18M15}

\section{Introduction}

\subsection{Motivation}

Algebraic structures play an important role in the study of conformal field theory (CFT) and topological field theory (TFT). A key structure in these applications are \emph{modular categories}, i.e., non-degenerate ribbon categories \cite{KL,Shi1}. In rational CFT, modular fusion categories appear as categories of representations over a vertex operator algebra (VOA) \cite{Hua} while modular fusion categories are utilized to construct 3d TFTs of surgery type \cite{RT91,Tur}, and appear in the classification of 3d TFTs \cite{BDSV}.

Generalizations of part of the theory and applications of modular fusion categories to low-dimensional topology have been obtained for non-semisimple (i.e., not necessarily semisimple) modular categories. These constructions include equivalent characterizations of modularity conditions \cite{Shi1}, mapping class group actions and modular functors \cite{FSS,LMSS,SW}, and partially defined non-semisimple TFTs \cite{DGGPR, KL}.
In general, it is still open whether the non-semisimple braided categories of representations of a logarithmic conformal field theories are modular~\cite{HLZ, Len}. A first example of modular categories obtained from groups are the \emph{Dijkgraaf--Witten~$($DW$)$ categories} $\cZ\big(\Vect_G^\omega\big)$ associated to a finite group $G$ and a $3$-cocycle $\omega$ on $G$ \cite{DPR}. These categories are only semisimple if the characteristic of $\Bbbk$ does not divide $|G|$ and are equivalent to representations of certain lattice VOAs.

In this paper, we focus on the study of (Frobenius) algebras in modular categories. On the one hand, modules over such algebras describe boundary conditions of the associated rational CFT associated to a certain VOA \cite{FFRS,FRS}. On the other hand, given a VOA, its possible extensions are in a one-to-one correspondence with commutative algebras in its category of representations~\cite{HKL}. This result extends to vertex operator superalgebras \cite{CKM}. These results give us motivation for classifying algebra objects in $\cZ\big(\Vect_G^\omega\big)$. Many categories of representations of a VOA are pointed fusion categories, like the case of, e.g., lattice VOAs coming from an even, integral lattice (here, $G=\Lambda^*/\Lambda$ is the discriminant form of the lattice, note that this $G$ is abelian) \cite{DL,Len}. In this sense, an important family of vertex operator algebras are the holomorphic ones, those whose category of representations is simply $\Vect$. Given a certain group $G$, one can take the so-called orbifold of a holomorphic VOA, see, e.g., \cite{DongRenXu, Moeller}. Its category of representations will be then equivalent to $\cZ\big(\Vect_G^\omega\big)$ \cite{McRae}.

Given a commutative algebra $A$ in a braided tensor category $\cC$ one defines a braided tensor category $\locmod_\cC(A)$ of \emph{local modules} \cite{KO,LW3,Par,Sch}. Such categories of local modules have been of particular interest in the mathematical physics literature, see, e.g., \cite{FFRS, FRS}.
For instance, categories of local modules relate the representations of a VOA to those of its extensions \cite{CKM,HKL, KO}. Given a \emph{rigid Frobenius algebra} (i.e., a connected commutative special Frobenius algebra) in~$\cC$, it was shown that the rigid monoidal category $\locmod_\cC(A)$ of local modules is again modular (see~\cite{KO} in the semisimple case, and \cite{LW3} in the general case). Such rigid Frobenius algebras were classified for the semisimplification of $U_q(\mathfrak{sl}_2)$-modules \cite{KO}, for the Drinfeld center of modules over a finite group \cite{Dav3,LW3}, and for DW categories $\cZ\big(\Vect_{\Bbbk G}^\omega\big)$ in $\cha\Bbbk=0$ \cite{DS}.

In the present paper, we construct Frobenius monoidal functors. Given two monoidal cate\-gories~$\cC$ and $\cD$, a Frobenius monoidal functor $F\colon \cC\to \cD$ comes with a choice of natural transformations
\begin{gather*}
\mu_{V,W}\colon\ F(V)\otimes F(W)\to F(V\otimes W),\qquad
\nu_{V,W}\colon\ F(V\otimes W)\to F(V)\otimes F(W),
\end{gather*}
which make $F$ a lax and oplax monoidal functor and satisfy compatibility conditions which are analogue to those of a product and coproduct of a Frobenius algebra. While any monoidal functor is, in particular, Frobenius monoidal, for general Frobenius monoidal functors, like those considered in this paper, $F(V)\otimes F(W)$ and $F(V\otimes W)$ are not isomorphic. However, any Frobenius monoidal functor sends Frobenius algebras in $\cC$ to Frobenius algebras in $\cD$.
Frobenius monoidal functors have recently appeared in different contexts in the quantum algebra literature, see, e.g., \cite{FHL,WINART,Yad}. Here, we construct Frobenius monoidal functors to categories of the form~$\cZ\big(\Vect_G^\omega\big)$. These functors are separable, so that $F(V\otimes W)$ is naturally a direct summand of $F(V)\otimes F(W)$, and compatible with braidings whence they preserve connected commutative Frobenius algebras. We apply these functors to classify rigid Frobenius algebras in $\cZ\big(\Vect_G^\omega\big)$ for a field of arbitrary characteristic.

Algebra objects in $\Vect_G^\omega$ were classified up to equivalence of the associated $\Vect_G^\omega$-module categories and representatives are given by twisted group algebras $A(N,\kappa)$ associated to a normal subgroup $N$ and a $2$-cocycle $\kappa$ such that $\deri \kappa= \omega |_N$ \cite{Nat, Ost}, see also \cite{WINART} for explicit Frobenius algebra structures on these algebras. In this paper we find conditions for the existence of lifts of these twisted group algebras to rigid Frobenius algebras in $\cZ\big(\Vect_G^\omega\big)$ in terms of homological algebra data building on results of \cite{DS}. To these central lifts of the twisted group algebra $A(N,\kappa)$ one can then associate tensor categories of representations whose centers are given by local modules.

\subsection{Statements of results}

Let $\Bbbk$ be an algebraically closed field of arbitrary characteristic. We fix a finite group $G$, with a~subgroup $H$, and a $3$-cocycle $\omega$ on $G$ and prove the following result.

\begin{theorem*}[see Propositions~\ref{prop:Frob} and \ref{prop:braidedFrob}]
There is a separable Frobenius monoidal functor $\rI\colon \cZ\big(\Vect_H^{\omega|_{H}}\big)\to \cZ\big(\Vect_G^\omega\big)$. This tensor functor $\rI$ is compatible with braidings and preserves ribbon twists.
\end{theorem*}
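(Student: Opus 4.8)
The plan is to realise $\rI$ as a composite of two more elementary constructions and then track all the structure through each. First I would use the standard description of $\cZ(\Vect_G^\omega)$ as the category of $\omega$-twisted $G$-graded vector spaces equipped with a twisted $G$-action, equivalently as $\omega$-twisted Yetter--Drinfeld modules over $G$; restriction along $H \hookrightarrow G$ gives an obvious oplax/lax pair in one direction, but the functor we want goes the "wrong" way, so the key idea is that $\rI$ should be built from an induction-type functor $\Ind_H^G$ together with the twisted group algebra $\Bbbk[G/H]$-coinduction. Concretely, on a twisted $H$-Yetter--Drinfeld module $V$ one sets $\rI(V) = \Bbbk G \otimes_{\Bbbk H} V$ with the induced grading and action (suitably $\omega$-twisted using a fixed set-theoretic section of $G \to G/H$ and the $2$-cochains measuring the failure of $\omega|_H$ to trivialise), and one checks this lands in $\cZ(\Vect_G^\omega)$.

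Next I would write down the structure morphisms. The lax structure $\mu_{V,W}\colon \rI(V)\otimes \rI(W)\to \rI(V\otimes W)$ is the "multiply the $\Bbbk G$-coordinates and project to the diagonal $H$-double-coset" map: on $g\otimes v \otimes g'\otimes w$ it is zero unless $g' \in gH$, in which case it outputs $g \otimes (v\otimes g^{-1}g' \cdot w)$, with an $\omega$-dependent scalar from reassociating the section cochains. The oplax structure $\nu_{V,W}$ is the corresponding splitting, $g\otimes(v\otimes w) \mapsto \sum_{h\in H} g\otimes v \otimes gh \otimes h^{-1}w$ up to normalising scalars. Separability (the statement that $\mu\circ\nu$ is an isomorphism, in fact a scalar multiple of the identity when $|H|$ is invertible, and more generally that $\rI(V\otimes W)$ is a direct summand of $\rI(V)\otimes\rI(W)$) would follow by direct computation: $\mu\circ\nu$ acts on the summand indexed by a coset as multiplication by $|H|\cdot(\text{unit})$, but to handle arbitrary characteristic I would instead exhibit the idempotent splitting abstractly, using that $\Bbbk G$ is free as a left $\Bbbk H$-module so that $\rI$ has both adjoints and the unit/counit of the (co)induction adjunction provide a canonical splitting independent of $|H|$ being invertible.

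Then I would verify the Frobenius compatibility axioms — that $(\mu\otimes \ide)\circ(\ide\otimes\nu) = \nu\circ\mu = (\ide\otimes\mu)\circ(\nu\otimes\ide)$ — which is the heart of \Cref{prop:Frob}. This reduces, after unwinding the coset bookkeeping, to an identity among the section $2$-cochains $\tau\colon G/H \times G/H \to \Bbbk^\times$ and the $3$-cocycle $\omega$; the relevant identity is exactly the $3$-cocycle condition for $\omega$ pulled back along the section, so it is a finite but mechanical check. For the unit and counit I would take the maps $\rI(\one_H) = \Bbbk[G/H] \to \one_G = \Bbbk$ and $\Bbbk \to \Bbbk[G/H]$ picking out, respectively, the sum of all cosets and the trivial coset; lax/oplax unit axioms are then immediate. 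Finally, for \Cref{prop:braidedFrob}, I would invoke the explicit braiding on $\cZ(\Vect_G^\omega)$ (the half-braiding formula $c_{V,W}(v\otimes w) = (|v|\cdot w)\otimes v$ with an $\omega$-scalar) and check $\mu_{W,V}\circ c_{\rI V,\rI W} = \rI(c_{V,W})\circ \mu_{V,W}$ together with the analogous oplax identity; since the braiding only sees the grading and the action, and $\rI$ was defined to be compatible with both, this again comes down to matching the $\omega$-cocycle scalars. Ribbon-twist preservation, $\rI(\theta_V) = \theta_{\rI V}$, is then automatic because the twist on $\cZ(\Vect_G^\omega)$ is given on the $g$-homogeneous component by the scalar coming from $\omega(g,g,g)$-type data, which is manifestly transported by the inclusion of cosets.

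The main obstacle I expect is not the categorical formalism but the cocycle bookkeeping: choosing the section $s\colon G/H \to G$ and the associated normalisation $2$-cochains so that all of the structure morphisms are simultaneously well-defined (independent of coset representatives) and so that the Frobenius and braiding axioms hold on the nose rather than merely up to a coboundary. I would therefore isolate early a single lemma recording the cochain identities satisfied by $\omega$, $\omega|_H$, and the section data, and then quote it repeatedly; getting that lemma exactly right — in particular in characteristic dividing $|G|$, where one cannot rescale away inconvenient scalars — is where the real work lies.
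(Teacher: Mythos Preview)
Your overall architecture matches the paper's: realise $\rI(V)$ as $\Bbbk G \otimes_{\Bbbk H} V$ with twisted YD structure, write down explicit lax and oplax structure maps with scalars governed by the $2$-cochains $\tau$ and $\gamma$ derived from $\omega$, and reduce every axiom to a $3$-cocycle identity recorded once and reused. That part is fine.

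The genuine gap is your oplax structure. You propose
\[
\nu_{V,W}\bigl(g\otimes(v\otimes w)\bigr)\;=\;\sum_{h\in H} (g\otimes v)\otimes (gh\otimes h^{-1}w)
\]
up to scalars, and then observe that $\mu\circ\nu$ is $|H|$ times the identity, forcing you into an adjunction workaround in bad characteristic. But this is not the right map. Since $gh\otimes h^{-1}w$ and $g\otimes w$ already coincide in $\rI(W)$ by the defining relation of the induced module, your sum collapses to $|H|$ copies of the same vector. The correct oplax structure is simply the diagonal
\[
\nu_{V,W}\bigl(g\otimes(v_d\otimes w_f)\bigr)\;=\;\gamma(g)(d,f)\,(g\otimes v_d)\otimes(g\otimes w_f),
\]
with no sum at all. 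With this $\nu$ one has $\mu_{V,W}\circ\nu_{V,W}=\ide$ on the nose, so the functor is separable in every characteristic without any appeal to adjoints or invertibility of $|H|$. Your worry about characteristic dividing $|G|$ is therefore an artefact of the wrong formula for $\nu$, not a real obstruction.

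Two smaller points. First, your unit map $\one\to\rI(\one)=\Bbbk[G/H]$ should send $1$ to the sum of all coset representatives, not the trivial coset; otherwise it is not a morphism of twisted $G$-modules (the trivial coset alone is not $G$-stable). Second, ribbon preservation is not quite ``automatic'': the verification that $\theta_{\rI(V)}(g\otimes v_d)=g\otimes d\cdot v_d$ uses the identity $\tau(gdg^{-1},g)(d)=\tau(g,d)(d)$, which is a specific consequence of the $3$-cocycle condition and needs to be checked rather than asserted.
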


Using the Frobenius monoidal functors $\rI$, we classify rigid Frobenius algebras in $\cZ\big(\Vect_G^\omega\big)$ generalizing results by Davydov--Simmons \cite{DS} to the non-semisimple case. In fact, all rigid Frobenius algebras in $\cZ\big(\Vect_G^\omega\big)$ are of the form $A=\rI(B)$ for some subgroup $H$ of $G$, and $B=B(N,\kappa,\varepsilon)$ a rigid Frobenius algebra in \smash{$\cZ\big(\Vect_H^{\omega|_H}\big)$} with $\dim_{\Bbbk}B_1=1$. Such algebras $B$ are parametrized by certain elements $\varepsilon\oplus \kappa$ of the second cohomology group $\widetilde{H}^2_{\Tot}(H,N,\Bbbk^\times)$ of a~truncated total complex \smash{$\big(\widetilde{F}^\bullet_{\Tot}(H,N,\Bbbk^\times), \deri_{\Tot}\big)$} which computes the group cohomology of the semi-direct product $H\ltimes N$, described in Appendix~\ref{appendix:coh-crossed} and \cite[Appendix~A]{DS}.

The following result recovers, and extends to arbitrary characteristic, the classification of connected \'etale algebras in $\cZ\big(\Vect_G^\omega\big)$ in \cite[Theorem~3.15]{DS}.

\begin{theorem*}[see Theorem~\ref{thm:classification}]\label{thm2}
Every connected \'etale algebra in $\cZ\big(\Vect_G^\omega\big)$ is isomorphic to one of the form $A(H,N,\kappa, \epsilon)$ for some choice of data $H$, $N$, $\gamma$, $\epsilon$, where
\begin{itemize}\itemsep=0pt
 \item $H$ is a subgroup of $G$, with $N$ a normal subgroup of $H$,
 \item $\kappa\colon N\times N\to \Bbbk^\times$ is a function satisfying $\deri(\kappa)=\omega|_N$,
 \item $\epsilon\colon H\times N \to \Bbbk^\times$ is a function such that
 $\deri_{\Tot}(\varepsilon\oplus\kappa)=\tau\oplus \gamma \oplus \omega$,
 \item the compatibility $\epsilon(n,m)=\frac{\kappa(nmn^{-1},n)}{\kappa(n,m)}$ holds for all $n,m\in N$.
\end{itemize}
Every such connected \'etale algebra has trivial twist and is a rigid Frobenius algebra if and only if~$|N|\cdot |G:H|\neq 0 \in \Bbbk^{\times}$.
\end{theorem*}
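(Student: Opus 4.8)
The plan is to combine the two preceding theorems: Theorem~\ref{thm2} gives a parametrization of connected \'etale algebras $A(H,N,\kappa,\epsilon)$, and the first theorem provides the separable Frobenius monoidal functor $\rI\colon \cZ(\Vect_H^{\left.\omega\right|_H})\to\cZ(\Vect_G^\omega)$ which preserves braidings and ribbon twists. The key observation is that $A(H,N,\kappa,\epsilon)=\rI(B)$ where $B=B(N,\kappa,\varepsilon)$ is a connected commutative algebra in $\cZ(\Vect_H^{\left.\omega\right|_H})$ supported on $N\subseteq H$, so that properties of $A$ can be read off from properties of $B$ together with the behaviour of $\rI$. First I would verify the ``trivial twist'' claim: since the ribbon twist on $B$ is governed by the quadratic form associated to $\kappa$ on the (isotropic with respect to $\omega|_N$) subgroup $N$, and $\rI$ preserves ribbon twists by the first theorem, it suffices to check that $\theta_B=\id_B$; this is part of the definition of $B$ being a rigid Frobenius (hence \'etale, hence isotropic) algebra, and it transports along $\rI$.

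The substantive part is the ``if and only if'' for the rigid Frobenius property. Here I would argue as follows. A connected commutative \'etale algebra is rigid Frobenius precisely when it is \emph{separable} (equivalently, when the composite $A\otimes A\xrightarrow{m}A\xrightarrow{\Delta}A\otimes A$ is a nonzero scalar times a splitting, i.e.\ the canonical Frobenius pairing is non-degenerate and separability holds over $\Bbbk$); in characteristic zero every \'etale algebra is automatically rigid Frobenius, but in positive characteristic separability can fail. The strategy is to compute the relevant \emph{separability element} or, equivalently, the scalar by which $m\circ\Delta$ acts, and show it equals (up to a nonzero factor) $|N|\cdot|G:H|$. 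To do this I would: (i) identify $\Delta\circ m$ on $B=B(N,\kappa,\varepsilon)$ inside $\cZ(\Vect_H^{\left.\omega\right|_H})$ — as a twisted group algebra on $N$ this computation reduces to the classical fact that $\Bbbk_\kappa N$ is separable iff $|N|\neq 0$ in $\Bbbk$, contributing the factor $|N|$; (ii) track what the separable Frobenius functor $\rI$ does to this composite. Because $\rI$ is separable, $\mu\circ\nu=\beta\cdot\id$ for a nonzero scalar, but the induction-type functor $\rI$ is built from a sum over cosets $G/H$, and the Frobenius structure morphisms $\mu,\nu$ on $\rI$ carry a normalization; composing the algebra structure $\rI(m)\circ\mu$ with the coalgebra structure $\nu\circ\rI(\Delta)$ introduces the index factor $|G:H|$. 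The upshot is $\Delta_A\circ m_A=c\cdot|N|\cdot|G:H|\cdot(\text{idempotent})$ for a nonzero universal constant $c$, so $A$ is separable — hence rigid Frobenius — exactly when $|N|\cdot|G:H|\neq 0\in\Bbbk^\times$.

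In more detail, the forward direction (rigid Frobenius $\Rightarrow |N|\cdot|G:H|\neq0$) follows because separability forces the above scalar to be invertible. For the converse, assuming $|N|\cdot|G:H|\neq 0$, I would exhibit an explicit separability idempotent: on the $B$-side, $\tfrac{1}{|N|}\sum_{n\in N}\kappa(\text{--})^{-1}\, n^{-1}\otimes n$ furnishes one for the twisted group algebra, and applying $\rI$ together with $\mu,\nu$ and dividing by the coset-index normalization produces a separability idempotent for $A$; one then checks it satisfies the bimodule and counit axioms, using that $\rI$ is Frobenius monoidal and braided (so commutativity of $A$ is preserved and the idempotent is symmetric). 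Connectedness $\dim_\Bbbk A_1=1$ and rigidity are inherited from $B$ via $\rI$ as recorded in the discussion preceding Theorem~\ref{thm2}.

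The main obstacle I anticipate is bookkeeping the precise scalar normalizations: the Frobenius structure morphisms $\mu_{V,W},\nu_{V,W}$ on $\rI$ come with $3$-cocycle twists (coherence data involving $\omega$ restricted along coset representatives), and one must ensure that all the $\omega$- and $\kappa$-factors cancel so that the final obstruction is cleanly $|N|\cdot|G:H|$ with no hidden characteristic-dependent coefficient. This is exactly the kind of computation where a sign or a cocycle could in principle sneak in, and verifying it is a genuine scalar in $\Bbbk^\times$ (rather than a more complicated expression) is where care is needed; once the functor $\rI$ and the algebras $B(N,\kappa,\varepsilon)$ are set up as in the earlier sections, however, this is a finite check on the relevant hom-spaces.
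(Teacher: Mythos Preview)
Your proposal addresses only the final clause of the statement (the rigid Frobenius iff condition) and the trivial-twist assertion, treating the classification itself as given. But the sentence ``Every connected \'etale algebra in $\cZ(\Vect_G^\omega)$ is isomorphic to one of the form $A(H,N,\kappa,\epsilon)$'' is the substantive part of the theorem, and your outline contains no argument for it. The paper's proof of this direction proceeds as follows: given an arbitrary connected \'etale algebra $A$, one first analyses the degree-one part $A_1$, shows it is a connected separable commutative $\Bbbk$-algebra with $G$-action, hence (since $\Bbbk$ is algebraically closed) isomorphic to $\Bbbk^n$; transitivity of the $G$-action on primitive idempotents then forces $A_1\cong A_H=\Bbbk(G/H)$ for some subgroup $H$. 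Now $A$ is a local $A_H$-module, so the equivalence $\rI\colon \cZ(\Vect_H^\omega)\simeq \locmod_{\cZ(\Vect_G^\omega)}(A_H)$ yields $A\cong\rI(B)$ for a connected \'etale $B$ in $\cZ(\Vect_H^\omega)$ with $\dim_\Bbbk B_1=1$; a separate proposition then classifies such $B$ as $B(N,\kappa,\epsilon)$. None of this reduction appears in your plan.

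For the part you do treat, the overall shape is right but two details are off. First, connectedness of $A=\rI(B)$ is \emph{not} simply inherited from $B$: one has $\dim_\Bbbk A_1=|G{:}H|$, not $1$, and the argument that $\Hom_{\cZ(\Vect_G^\omega)}(\one,A)$ is one-dimensional requires identifying it with $(A_1)^G$ and using transitivity of the $G$-action on $G/H$. Second, the two scalars are located differently than you suggest: $m_A\Delta_A=|N|\cdot\ide_A$ comes straight from the twisted group algebra on $N$ (the functor $\rI$ is separable with $\mu\nu=\ide$, contributing no extra scalar here), while the index $|G{:}H|$ arises from $\varepsilon_A(1_A)$, since $1_A$ is a sum over coset representatives. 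With those corrections your specialness computation goes through and matches the paper's.
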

We provide explicit formulas for the Frobenius algebras $A(H,N,\kappa,\epsilon)$ in Lemma~\ref{lem:AFrobenius}.

An interpretation of Section~\ref{thm2} is that the algebras $B(N,\kappa,\epsilon)$ are lifts of the twisted group algebras $A(N,\kappa)$ in \smash{$\Vect_H^{\omega|_H}$} to the center $\cZ\big(\Vect_H^\omega\big)$. These twisted group algebras were used to classify indecomposable module categories over $\Vect_H^\omega$ \cite{Nat,Ost} and are separable Frobenius algebras \cite{WINART}.
Our results show that lifts of these algebras to the center along the forgetful functor are parametrized by functions $\epsilon\colon H\times N\to \Bbbk^\times$ satisfying the conditions from Section~\ref{thm2}.

The category of local modules $\locmod_{\cZ(\Vect_{G}^\omega)}(A)$ over a rigid Frobenius algebra $A$ as in Section~\ref{thm2} is a modular category by \cite[Theorem 4.12]{LW3} and \cite[Theorem~4.5]{KO} in the semisimple case. In fact, \cite[Theorem~3.16]{DS} shows that such modular categories are equivalent as ribbon categories to~$\cZ\big(\Vect_{H/N}^{\ov{\omega}}\big)$ for a $3$-cocycle $\ov{\omega}$ on $H/N$ such that its pullback to $H$ via the quotient homomorphism is equivalent to $\omega|_H$.

In Section~\ref{sec:expl}, we classify all rigid Frobenius algebras in $\cZ\big(\Vect_G^\omega\big)$ for an odd dihedral group $G=D_{2m+1}$, up to \emph{isomorphism} of algebras in $\cZ\big(\Vect_G^\omega\big)$ rather than up to equivalence of their categories of local modules.

The paper is structured as follows. In Section~\ref{sec:background}, we recall the necessary background on (non-semisimple) modular categories, algebraic structures in ribbon categories, and local modules, concluding with a brief review of DW categories. Section~\ref{sec:results} contains the results of the paper, starting with a discussion on DW categories associated to quotient groups, followed by the construction of the Frobenius monoidal functors, and the classification of rigid Frobenius algebras in DW categories. In Appendix~\ref{appendix}, we include basic definitions from group cohomology and several cocycle identities used throughout the text.

\section{Background}\label{sec:background}

\subsection{Modular tensor categories}

Throughout this paper, we fix $\mathbbm{k}$ to be an algebraically closed field of arbitrary characteristic.

In this section, we collect some basic definitions, see, e.g.,~\cite{EGNO} for details. A \textit{monoidal category}~$\cC$ consists of a tuple $( \cC,\otimes, \mathbbm{1},\alpha,\lambda,\rho)$ where $\cC$ is a category, $\otimes \colon \cC \times \cC \to \cC$ is a bifunctor, $\mathbbm{1} \in \Ob (\cC)$, $\alpha_{X,Y,Z} \colon ( X \otimes Y) \otimes Z \to X \otimes ( Y \otimes Z)$ is a natural isomorphism for each $X, Y, Z \in \Ob (\cC)$, and $\lambda_X \colon \mathbbm{1} \otimes X \to X$ and $\rho_X \colon X \otimes \mathbbm{1} \to X$ are natural isomorphisms for all $X \in \Ob (\cC)$,
satisfying coherence axioms (pentagon and triangle).
A functor $F\colon \cC\to \cD$ between two monoidal categories is a \emph{monoidal functor} if there exist natural isomorphisms
\[\mu^F\colon\ F(X)\otimes^\cD F(Y)\to F\big(X\otimes^\cC Y\big), \qquad \one^\cD \to F\big(\one^\cC\big),\]
satisfying certain coherence conditions, see \cite[Definition~2.4.1]{EGNO}.

A monoidal category is called \textit{rigid} if it comes equipped with left and right dual objects. That means, for every $X \in \Ob (\cC)$ there exists respectively an object $X^* \in \Ob (\cC)$ with evaluation and coevaluation maps $\ev_X \colon X^* \otimes X \to \mathbbm{1}$ and $\coev_X \colon \mathbbm{1} \to X \otimes X^*$, as well as an object~${{}^*X \in \Ob (\cC)}$ with evaluation and coevaluation maps $\evr_X \colon X \otimes {}^*X \to \mathbbm{1}$ and $\coevr_X \colon \mathbbm{1} \to {}^*X \otimes X$ satisfying in both cases the usual conditions.
If a rigid monoidal category comes equipped with isomorphisms~$j_X \colon X \to X^{**}$ natural in $X \in \Ob(\cC)$ and satisfying that $j_{X \otimes Y}=j_X \otimes j_Y$, then it is called \textit{pivotal}. The \textit{quantum dimension} of an object $X$ in a pivotal category is the composition $\mathrm{qdim}_j (X):=\ev_{X^*} ( j_X \otimes \ide_{X^*} ) \coev_X \in \End_\cC ( \mathbbm{1} )$.

A $\mathbbm{k}$-linear abelian category $\cC$ is \textit{locally finite} if, for any two objects $V,W \in \Ob (\cC)$, $\Hom_\cC ( V,W)$ is a finite-dimensional $\mathbbm{k}$-vector space and every object has a finite filtration by simple objects. Further, we say $\cC$ is \textit{finite} if $\cC$ is equivalent to a category of finite-dimensional modules over a~finite-dimensional $\Bbbk$-algebra. A \textit{tensor category} is a locally finite, rigid, monoidal category such the tensor product is $\mathbbm{k}$-linear in each slot and the monoidal unit is a simple object of the category.

A monoidal category $\cC$ is called \textit{braided} if it comes equipped with natural isomorphisms $c_{X,Y} \colon X \otimes Y \to Y \otimes X$ for all $X, Y \in \Ob (\cC)$, called the \textit{braiding}, that are compatible with the monoidal structure of the category. This means, the braiding satisfies the so-called \textit{hexagon identities} for any three objects $X, Y, Z \in \Ob (\cC)$:
\begin{gather*}
 \xymatrix{& X \otimes ( Y \otimes Z) \ar[rr]^{c_{X, Y \otimes Z}} && ( Y \otimes Z ) X \ar[dr]^{\alpha_{Y,Z,X}} & \\
 ( X \otimes Y ) \otimes Z \ar[ur]^{\alpha_{X,Y,Z}} \ar[dr]_{c_{X,Y} \otimes \ide_Z} &&&& Y \otimes ( Z \otimes X ), \\
 & ( Y \otimes X ) \otimes Z \ar[rr]^{\alpha_{Y,X,Z}} && Y \otimes ( X \otimes Z ) \ar[ur]_{\ide_Y \otimes c_{X,Z}}
 }
 \\
 \xymatrix{ & ( X \otimes Y ) \otimes Z \ar[rr]^{c_{X \otimes Y,Z}} && Z \otimes ( X \otimes Y ) \ar[dr]^{\alpha^{-1}_{Z,X,Y}} & \\
 X \otimes ( Y \otimes Z ) \ar[ur]^{\alpha^{-1}_{X,Y,Z}} \ar[dr]_{\ide_X \otimes c_{Y,Z}} &&&& ( Z \otimes X ) \otimes Y. \\
 & X \otimes ( Z \otimes Y ) \ar[rr]^{\alpha^{-1}_{X,Z,Y}} && ( X \otimes Z ) \otimes Y \ar[ur]_{c_{X,Z} \otimes \ide_Y}
 }
\end{gather*}

An example of a braided category is that of the \emph{Drinfeld center} (or \emph{monoidal center}, or simply \emph{center}) of a monoidal category $\cC$. Its objects are pairs $\big( X, c^X \big)$ where $X \in \Ob (\cC)$ and $c^X_V \colon X \otimes V \to V \otimes X$ (for any $V \in \Ob (\cC)$) is a natural isomorphism called the \textit{half-braiding} satisfying that{\samepage
\begin{equation*}
 c^X_{V \otimes W} =\big( \ide_{V} \otimes c^X_{W} \big) \big( c^X_{V} \otimes \ide_{W} \big).
\end{equation*}
The braiding of this category is given by $c_{(X, c^X),(Y,c^Y)}:=c^X_{Y}$.}

A \emph{ribbon category} is a braided tensor category $\cC$ together with a \emph{ribbon twist}, i.e., a natural isomorphism $\theta_X\colon X\to X$ which satisfies
\begin{align}\label{eq:ribbontwist-cond}
\theta_{X \otimes Y} = (\theta_X \otimes \theta_Y) c_{Y,X} c_{X,Y},\qquad \theta_\one = \ide_\one,\qquad (\theta_X)^* = \theta_{X^*}.
\end{align}
A tensor functor $F\colon \cC\to \cD$ between ribbon categories $\cC$, $\cD$ with ribbon twists $\theta^\cC$, $\theta^\cD$ is a \emph{ribbon tensor functor} if it commutes with the ribbon structures in the sense that $F\big(\theta^{\cC}_V\big)=\theta^\cD_{F(V)}$. If $F$ is part of an equivalence of categories, then $\cC$ and $\cD$ are equivalent as ribbon categories.

In order to define modular tensor categories, we require the notion of non-degeneracy of a~braided category. We say that an object $X$ \emph{centralizes} another object $Y$ of $\cC$ if
\[c_{Y,X}c_{X,Y}=\ide_{X\otimes Y}.\]
A braided finite tensor category $\cC$ is \emph{non-degenerate} if the only objects $X$ that centralize \emph{all} objects of $\cC$ are of the form $X=\one^{\oplus n}$ \cite[Section~8.20]{EGNO}.
Equivalently, $\cC$ is non-degenerate if and only if it is \emph{factorizable}, i.e., there is an equivalence of braided monoidal categories~$\cZ(\cC)\simeq \cC^{\rev}\boxtimes\cC$, where $\cC^\rev$ is $\cC$ as a tensor category, but with reversed braiding given by the inverse braiding~\cite{Shi1}. If $\cC$ is a fusion category (i.e., a semisimple finite tensor category) then the above notion of non-degeneracy is equivalent to the commonly used condition that the $S$-matrix is non-singular.
A key definition for this paper is the concept of modular category that allows for using general finite tensor categories which are not necessarily non-semisimple.

\begin{Definition}[{\cite{KL,Shi1}}]
A braided finite tensor category is \emph{modular} if it is a non-degenerate ribbon category.
\end{Definition}

\subsection{Frobenius algebras in tensor categories}

In this section, let $\cC=( \cC,\otimes, \mathbbm{1},\alpha,\lambda,\rho)$ be a pivotal finite tensor category.

\begin{Definition} \label{def:co-alg-Frob}\quad
\begin{itemize}\itemsep=0pt
 \item[(a)] An \textit{algebra} in $\cC$ is a triple $( A,m,u)$, with $A \in \Ob (\cC)$, and $m \colon A \otimes A \to A$ (multiplication), $u \colon \mathbbm{1} \to A$ (unit) being morphisms in $\cC$,
 satisfying unitality and associativity constraints:
 \begin{align*}
 m (m \otimes \ide_A) = m(\ide_A \otimes m) \alpha_{A,A,A}, \qquad
 m (u \otimes \ide_A) = \lambda_A, \qquad m(\ide_A \otimes u) = \rho_A.
 \end{align*}

 \item[(b)] A \textit{coalgebra} in $\cC$ is a triple $( C,\Delta,\varepsilon)$, where $C \in \Ob(\cC)$, and $\Delta \colon C \to C \otimes C$ (comultiplication) and $\varepsilon \colon C \to \mathbbm{1}$ (counit) are morphisms in $\cC$,
 satisfying counitality and coassociativity constraints:
 \begin{align*}
 \alpha_{C,C,C}(\Delta \otimes \ide_C) \Delta = (\ide_C \otimes \Delta)\Delta, \qquad
 (\varepsilon \otimes \ide_C)\Delta = \lambda_C^{-1}, \qquad (\ide_C \otimes \varepsilon)\Delta = \rho_C^{-1}.
 \end{align*}

 \item[(c)] A \textit{Frobenius algebra} in $\cC$ is a tuple $ ( A, m, u, \Delta, \varepsilon )$, where
 $( A,m,u)$ is an algebra and $ ( A,\Delta,\varepsilon )$ is a coalgebra so that
 \begin{equation*}
 ( m \otimes \ide_A ) \alpha^{-1}_{A,A,A} ( \ide_A \otimes \Delta )=\Delta m = ( \ide_A \otimes m ) \alpha_{A,A,A} ( \Delta \otimes \ide_A ).
 \end{equation*}
\end{itemize}
\end{Definition}

\begin{Remark} \label{rem:Frobpairing}
 Alternatively, a Frobenius algebra in $\cC$ is a tuple $ ( A,m,u,p,q )$, where $ ( A, m, u )$ is an algebra, $p \colon A \otimes A \to \mathbbm{1}$ and $q \colon \mathbbm{1} \to A \otimes A$ are morphisms in $\cC$ satisfying an invariance condition,
 \[
 p ( \ide_A \otimes m ) \alpha_{A,A,A} = p ( m \otimes \ide_A ),
 \]
 and the `snake' equations.
To convert from $ ( A,m,u,p,q )$ to~$ ( A,m,u,\Delta,\varepsilon )$ in the previous definition, take
\[
\Delta:= ( m \otimes \ide_A )\alpha_{A,A,A}^{-1} (\ide_A \otimes q )\rho_A^{-1} \qquad \text{and} \qquad {\varepsilon:=p (u \otimes \ide_A)\rho_A^{-1}}.
\]
On the other hand, to convert from $( A,m,u,\Delta,\varepsilon )$ to $( A,m,u,p,q)$, take
$p:= \varepsilon_A m_A$ and $q:= \Delta_A u_A$, cf.\ \cite{FS}.
\end{Remark}

\begin{Definition}\quad
\begin{enumerate}\itemsep=0pt
 \item[(a)] An algebra $A$ in $\cC$ is \textit{indecomposable} if it is not isomorphic to a direct sum of non-trivial algebras in $\cC$.
 \item[(b)] An algebra $A$ in $\cC$ is \textit{connected} (or \textit{haploid}) if
 $\dim_{\mathbbm{k}}\Hom_{\mathcal C} ( \mathbbm{1}, A ) = 1$.
 \item[(c)] An algebra $A$ in $\cC$ is \textit{separable} if there exists a morphism $\Delta' \colon A \to A \otimes A$ in $\cC$ so that $m \Delta' = \ide_A$ as maps in $\cC$ with
 \begin{equation*}
( \ide_A\otimes m ) \alpha_{A, A, A} ( \Delta'\otimes \ide_A ) = \Delta' m = ( m \otimes \ide_A ) \alpha^{-1}_{A, A, A} ( \ide_A\otimes \Delta' ).
 \end{equation*}
 \item[(d)] A Frobenius algebra $ ( A,m,u,\Delta,\varepsilon )$ in $\cC$ is
 \textit{special} if $m \Delta = \beta_A\ide_A$ and $\varepsilon u = \beta_1 \; \ide_\mathbbm{1}$ for nonzero $\beta_A,\beta_\one \in \mathbbm{k}^\times$.
 \item[(e)] If $\cC$ is braided with braiding $c$, we call an algebra $A$ in $\cC$ \emph{commutative} if $m c_{A,A}=m$.
 \item[(f)] A separable commutative algebra in $\cC$ is also called an \emph{\'etale algebra}.
 \end{enumerate}
\end{Definition}

Recall that a ribbon category $\cC$ is, in particular, pivotal with pivotal structure $j$ discussed, for example, in \cite[Section~2.5]{LW3}.

\begin{Proposition}[{\cite[Proposition 3.12]{LW3}}] \label{prop:rigidFrobcharacterization}
The following statements are equivalent for a connected commutative algebra $A$ in a ribbon category $\cC$ with twist $\theta$:
\begin{enumerate}\itemsep=0pt
 \item[$(a)$] $A$ is separable with $\dim_j A \neq 0$ and $\theta_A=\ide_A$;
 \item[$(b)$] $A$ is a special Frobenius algebra;
 \item[$(c)$] $A$ admits a morphism $\varepsilon\colon A\to \one$ such $\varepsilon u=\ide_\one$, $\varepsilon m$ is non-degenerate, $\dim_j(A)\neq 0$, and~${\theta_A=\ide_A}$.
\end{enumerate}
\end{Proposition}

If $A$ satisfies the equivalent conditions from Proposition~\ref{prop:rigidFrobcharacterization}, then we say that $A$ is a \emph{rigid Frobenius algebra}. If $\cC$ is semisimple, the conditions in (c) on a connected commutative algebra in $\cC$ recover the definition of a rigid $\cC$-algebra used in \cite{KO} to show that the category of local modules are semisimple. We recall a version of this result which holds even if $\cC$ is not semisimple in Theorem~\ref{thm:locmodular}.

\subsection{Frobenius monoidal functors}

In this section, we recall the definition of a Frobenius monoidal functor and include basic results about such functors preserving algebraic structures in tensor categories. Let $\cC$ and $\cD$ be two monoidal categories.

\begin{Definition}
A \emph{lax monoidal functor} from $\cC$ to $\cD$ consists of
\begin{itemize}\itemsep=0pt
 \item a functor $F \colon \cC \to \cD$,
 \item a natural transformation $\mu_{V,W}\colon F(V)\otimes F(W)\longrightarrow F(V\otimes W)$, and
 \item a morphism $\eta\colon \one \longrightarrow F(\one)$,
\end{itemize}
for any $V,W \in \Ob (\cC)$, subject to the compatibility conditions:
\begin{gather}
\xymatrix{
( F (U) \otimes F (V) ) \otimes F (W) \ar[rr]^{\alpha_{F(U),F(V),F(W)}} \ar[d]_{\mu_{U,V}\otimes \ide_{F(W)}} && F (U) \otimes( F (V) \otimes F (W)) \ar[d]^{\ide_{F(U)} \otimes \mu_{V,W}} \\
F ( U \otimes V ) \otimes F (W) \ar[d]_{\mu_{U \otimes V,W}} && F (U) \otimes F ( V \otimes W ) \ar[d]^{\mu_{U,V \otimes W}} \\
F ( ( U \otimes V ) \otimes W ) \ar[rr]^{F( \alpha_{U,V,W} )} && F (U \otimes ( V \otimes W )),
} \nonumber\\
\begin{split}
& \xymatrix{
\one \otimes F (U) \ar[rr]^{\eta \otimes \ide_{F(U)}} \ar[d]_{\lambda_{F(U)}} && F (\one) \otimes F (U) \ar[d]^{\mu_{\one,U}} \\
F (U) && F ( \one \otimes U ), \ar[ll]_{F (\lambda_U)}
} \qquad \xymatrix{
F (U) \otimes \one \ar[rr]^{\ide_{F(U)} \otimes \eta} \ar[d]_{\rho_{F(U)}} && F (U) \ar[d]^{\mu_{U,\one}} \\
F (U) && F ( U \otimes \one). \ar[ll]_{F ( \rho_U)}
}\end{split}
\label{laxmon}
\end{gather}
We will denote the lax monoidal structure as $( \mu,\eta)$.
\end{Definition}

\begin{Definition}
An \emph{oplax monoidal functor} from $\cC$ to $\cD$ consists of
\begin{itemize}\itemsep=0pt
 \item a functor $F \colon \cC \to \cD$,
 \item a natural transformation $\nu_{V,W}\colon F(V\otimes W)\longrightarrow F(V)\otimes F(W)$ and
 \item a morphism $\epsilon\colon F(\one)\longrightarrow \one$,
\end{itemize}
for any $V,W \in \Ob (\cC)$, subject to compatibility conditions analogous to those of lax monoidal~\eqref{laxmon}, but with their arrows reversed. We will denote the oplax monoidal structure as $( \nu,\epsilon)$.
\end{Definition}

\begin{Definition}\label{def:Frobmonoidal}
A \emph{Frobenius monoidal functor} $F\colon \cC\to \cD$ between two monoidal categories $\cC$,~$\cD$ is a bilax monoidal functor, i.e., comes with a lax monoidal structure $(\mu,\eta)$, and an oplax monoidal structure $(\nu, \epsilon)$, where
\begin{align*}
 & \mu_{V,W}\colon\ F(V)\otimes F(W)\longrightarrow F(V\otimes W), \qquad \nu_{V,W}\colon\ F(V\otimes W)\longrightarrow F(V)\otimes F(W),\\
 & \eta\colon\ \one \longrightarrow F(\one), \qquad \epsilon\colon\ F(\one)\longrightarrow \one,
\end{align*}
for any objects $V$, $W$ of $\cC$,
satisfying the additional compatibility conditions
\begin{align}\label{frobmon1}
\vcenter{\hbox{\xymatrix{
F(V)\otimes F(W\otimes U)\ar[rr]^{\ide_{F(V)}\otimes \nu_{W,U}}\ar[d]_{\mu_{V,W\otimes U}} && F(V)\otimes (F(W)\otimes F(U))\ar[d]^{\alpha^{-1}_{F(V),F(W),F(U)}}\\
F(V\otimes (W\otimes U))\ar[d]_{F(\alpha^{-1}_{V,W,U})}&& (F(V) \otimes F(W))\otimes F(U)\ar[d]^{\mu_{V,W}\otimes \ide_{F(U)}}\\
F((V\otimes W)\otimes U)\ar[rr]^{\nu_{V\otimes W,U}}&&F(V\otimes W) \otimes F(U),
}}}
\\
\vcenter{\hbox{\xymatrix{
F(V\otimes W)\otimes F(U)\ar[rr]^{\nu_{V,W}\otimes \ide_{F(U)}}\ar[d]_{\mu_{V\otimes W,U}} && (F(V)\otimes F(W))\otimes F(U)\ar[d]^{\alpha_{F(V),F(W),F(U)}}\\
F((V\otimes W)\otimes U)\ar[d]_{F(\alpha_{V,W,U})}&& F(V) \otimes (F(W)\otimes F(U))\ar[d]^{\ide_{F(V)} \otimes \mu_{W,U}}\\
F(V\otimes (W\otimes U))\ar[rr]^{\nu_{V,W\otimes U}}&&F(V)\otimes F(W \otimes U).
}}}\label{frobmon2}
\end{align}
\end{Definition}

We say that a Frobenius monoidal functor is \emph{separable} if for any objects $V$, $W$ of $\cC$,
\[
 \mu_{V,W}\circ\nu_{V,W} = \ide_{F(V\otimes W)}.
\]
For details on these definitions see, e.g., \cite[Section~3.5]{AA}.

We are also interested in compatibility conditions of Frobenius monoidal functors with braidings.
Denote a braided monoidal category by $( \cC, c )$. Given two braided monoidal categories~$( \cC, c)$ and~$( \cD, d )$, a \textit{braided lax monoidal functor} is a lax monoidal functor $F \colon \cC \to \cD$ which in addition satisfies
\begin{equation*}
 \xymatrix{F (X) \otimes F ( Y ) \ar[d]_{\mu_{X,Y}} \ar[rr]^{d_{F(X),F(Y)}} && F ( Y) \otimes F (X) \ar[d]^{\mu_{Y,X}} \\ F ( X \otimes Y ) \ar[rr]^{F(c_{X,Y})} && F( Y \otimes X )}
\end{equation*} for any $X, Y \in \Ob (\cC)$.
The notion of braided oplax monoidal functor is analogous to this one. We note that (braided lax/oplax) Frobenius monoidal functors preserve algebraic structures in the respective categories.
\begin{Proposition}\label{prop:Frob-perserved}
Let $F\colon \cC \to \cD$ be a Frobenius monoidal functor.
\begin{enumerate}\itemsep=0pt
 \item[$(a)$] If $A$ is a $($co$)$algebra in $\cC$, then $F(A)$ is a $($co$)$algebra in $\cD$. In fact, $F$ restricts to a functor $F\colon \Alg(\cC)\to \Alg(\cD)$, and a functor $F\colon \CoAlg(\cC)\to \CoAlg(\cD)$.
 \item[$(b)$] If $A$ is a Frobenius algebra in $\cC$, then $F(A)$ is a Frobenius algebra in $\cD$. In fact, $F$ restricts to a functor $F\colon \FrobAlg(\cC)\to \FrobAlg(\cD)$.
 \item[$(c)$] If $F$ is, in addition, separable and $\epsilon \circ \eta\neq 0$ and $A$ a special Frobenius algebra in $\cC$, then~$F(A)$ is a special Frobenius algebra in $\cD$.
\end{enumerate}
\end{Proposition}

\begin{Definition} Take $A:=(A,m_A,u_A)$, an algebra in $\cC$.
A {\it right $A$-module in $\cC$} is a pair $(M, \rho_M)$, where $M \in \cC$, and $\rho_M:=\rho_M^A \colon M \otimes A \to M$ is a morphism in $\cC$ so that \[\rho_M(\rho_M \otimes \ide_A) = \rho_M(\ide_M \otimes m_A)\alpha_{M,A,A} \qquad \text{ and } \qquad r_M = \rho_M(\ide_M \otimes u_A).\] A {\it morphism} of right $A$-modules in $\cC$ is a morphism $f \colon M \to N$ in $\cC$ so that $f\rho_M = \rho_N (f \otimes \ide_A)$. Right $A$-modules in $\cC$ and their morphisms form a category, which we denote by~$\cC_A$.
The categories ${}_A \cC$ of {\it left $A$-modules $(M, \lambda_M:=\lambda_M^A \colon A \otimes M \to M)$ and ${}_A \cC_A$ of {\it $A$-bimodules} in $\cC$} are defined likewise.
\end{Definition}

It follows that given a Frobenius monoidal functor $F$, or, any lax monoidal functor, and an algebra $A$ in $\cC$, $F$ induces a functor $F\colon {}_A\cC\to {}_{F(A)}\cD$. Similar statements hold for left modules, and right\slash left comodules where an oplax monoidal functor is needed.

\subsection{Local modules}\label{subsec:local}

In the following, we recall local modules over commutative algebras in a braided category $\cC$ \cite{KO,LW3,Par,Sch}.

\begin{Definition}
Let $\Rep_{\cC}(A)$ denote the category whose objects are pairs $(V,a_V^r) \in \cC_A$, and morphisms are morphisms in $\cC_A$. We define $a_V^l$ as
\begin{align*}
a^l_V := a^r_V c_{A,V}\colon\ A\otimes V\isomorph V\otimes A \longrightarrow V.
\end{align*}
With this, $\big(V,a_V^l\big)$ is a \emph{left} module in $\cC$. As $A$ is commutative, the actions $a^r_V$, $a^l_V$ commute, $\big(V,a^l_V,a^r_V\big)$ becomes an $A$-bimodule in $\cC$, and $\Rep_{\cC}(A)$ is viewed as a full subcategory of $\bimod{A}(\cC)$ this way.

The category $\Rep_{\cC}(A)$ is monoidal as follows. Given two objects $V$, $W$ in $\Rep_{\cC}(A)$, their tensor product $V\otimes_A W$ is defined to be the coequalizer
\begin{align}\label{eq:rel-tensor}
\xymatrix{
V\otimes A \otimes W\ar@/^/[rr]^{a_V^r\otimes \ide_W}\ar@/_/[rr]_{\ide_V\otimes a_W^l}&& V\otimes W \ar[r]& V\otimes_A W,
}
\end{align}
which is an object in $\Rep_\cC(A)$ with the right $A$-module structure given by $a_{V\otimes_A W}^r = \ide_V\otimes a^r_W$. The unit object is the $A$-bimodule $A$ in $\cC$. This way, $\Rep_{\cC}(A)$ is a monoidal subcategory of $\bimod{A}(\cC)$.
\end{Definition}

\begin{Definition}[{\cite[Definition 2.1]{Par}}]
A right $A$-module $(V,a_V^r)$ in $\cC$ is called \emph{local} if
\begin{align*}
a^r_V&=a^r_V \; c_{A,V} \; c_{V,A}.
\end{align*}
The category of such local modules is denoted by $\locmod_{\cC}(A)$.
\end{Definition}

The category $\locmod_\cC(A)$ is a monoidal subcategory of $\Rep_\cC(A)$,
and $\locmod_\cC(A)$ is braided.
The braiding on $\locmod_\cC(A)$ is obtained from the braiding $c$ in $\cC$ which descends to the relative tensor products of two local modules. The algebra $A$ is \emph{trivializing} if $\locmod_\cC(A)\simeq \Vect$.

The definition of the monoidal category $\Rep_\cC(A)$ extends to the case when $\cD$ is a not necessarily braided monoidal category and $(A,c)$ is a commutative algebra in the Drinfeld center $\cZ(\cD)$ with half-braiding $c=\{c_X\mid X\otimes V\isomorph V\otimes X\}_{X\in \cD}$, see \cite[Section~4]{Sch}.

\begin{Definition}\label{def:RepA-general}
 Let $(A,c)$ be a commutative algebra in $\cZ(\cD)$. Define $\Rep_\cD(A,c)$ to be the category of right modules over $A$ in $\cD$ and monoidal structure given as in \eqref{eq:rel-tensor} with the left $A$-action defined by
 $a_{V}^l:=a_V^r c_V$ for $\big(V,a_V^r\big)$ a right $A$-module in $\cD$.
\end{Definition}
We will subsequently denote $\Rep_\cD(A,c)$ by $\Rep_{\cD}(A)$ when there is no confusion about which half-braiding is used.
We recall that by \cite[Corollary~4.5]{Sch}, the center $\cZ(\Rep_\cD(A))$ is equivalent to $\locmod_{\cZ(\cD)}(A)$ as a braided monoidal category. A special case of this result of interest occurs when~$\cC$ is already a braided monoidal category and we consider ${A^+=(A,c_{A,-})\in \ComAlg(\cZ(\cC))}$. Then Schauenburg's result gives an equivalence of braided monoidal categories between $\cZ(\Rep_\cC(A^+))$ and $\locmod_{\cZ(\cC)}(A^+)$.

Categories of local modules are a source of modular tensor categories, both in the semisimple case \cite[Theorem~4.5]{KO} and the non-semisimple case \cite[Theorem~4.12]{LW3}. For this, recall that a~rigid Frobenius algebra in a ribbon category $\cC$ is a connected commutative algebra satisfying the equivalent conditions of Proposition~\ref{prop:rigidFrobcharacterization}.

\begin{Theorem}\label{thm:locmodular}
If $\cC$ is a modular tensor category and $A$ is a rigid Frobenius algebra in $\cC$, then the category $\locmod_\cC(A)$ of local modules over $A$ in $\cC$ is also modular.
\end{Theorem}

Let $\cD$ be a finite tensor category and $(A,c)$ a rigid Frobenius algebra in $\cZ(\cD)$. The following lemma involves the left adjoint $U$ to the forgetful functor $\Rep_{\cD}(A)\to \cD$ and follows as in \cite[Lemma~4.5]{LW3}.
\begin{Lemma}\label{lem:U}
 The functor $U\colon \cD\to \Rep_\cD(A)$ which sends $X$ to $X\otimes A$ with right $A$-module structure given by multiplication in $A$ is a faithful dominant tensor functor.
\end{Lemma}

Powerful invariants of finite tensor categories are the \emph{Frobenius--Perron dimension} $\FPdim(\cD)$ and $\FPdim_\cD(X)$ for objects $X$ in $\cD$ \cite[Section~4.5]{EGNO}. The above Lemma~\ref{lem:U} implies that
\begin{equation}\label{eq:FPdim-Rep}
\FPdim \big(\Rep_{\cD}(A)\big)=\frac{\FPdim(\cD)}{\FPdim_\cD(A)}.
\end{equation}
This follows from \cite[Lemma~6.2.4]{EGNO} as in \cite[Lemma 4.5]{LW3}. Hence, as $\FPdim(\cZ(\cD))$ equals $\FPdim(\cD)^2$ by \cite[Theorem~7.16.6]{EGNO},
\[
\FPdim \big(\locmod_{\cZ(\cD)}(A)\big)=\frac{\FPdim(\cD)^2}{\FPdim_\cD(A)^2},
\]
see also \cite[Corollary~4.1]{DMNO}. If the category $\cD$ possesses a quasi-tensor functor $F$ to $\Vect$, then $\FPdim_{\cD}(X)=\dim_\Bbbk F(X)$ for any object $X$ in $\cD$ \cite[Proposition~4.5.7]{EGNO}.

\subsection{Dijkgraaf--Witten categories}
In this section, we give an explicit description of the Dijkgraaf--Witten (DW) categories $\cZ\big(\Vect_G^\omega\big)$ associated to a group $G$ and a $3$-cocycle $\omega$ of \cite{DPR} via the structure of twisted Yetter--Drinfeld modules following \cite[Proposition~3.2]{Maj98}.

Let $G$ be a finite group with a $3$-cocycle $\omega\in C^3(G,\Bbbk^\times)$, see equation~\eqref{3cocycle}.
Associated to this data, we define the category $\Vect_G^\omega$ of $G$-graded $\Bbbk$-vector spaces with associativity isomorphism given by \[\alpha((v_g\otimes w_h)\otimes u_k)=\omega^{-1}(g,h,k)v_g\otimes (w_h\otimes u_k),\]
where $v_g$, $w_h$, $u_k$ are $G$-homogeneous elements of degrees $g,h,k$, respectively. We observe that if $\phi\colon G\to G'$ is an isomorphism of groups such that $\omega$ and $\phi^{*}(\omega')$ define the same element in $H^3(G,\Bbbk^\times)$, then $\Vect_G^\omega$ and $\Vect_{G'}^{\omega'}$ (and hence their centers) are equivalent as monoidal categories (respectively, braided monoidal categories) \cite[Section~2.6]{EGNO}.

\begin{Lemma}\label{lem:omega-equiv}
If $\omega,\omega'\in C^3(G,\Bbbk^\times)$ are equivalent, then any choice of $\mu\in \Hom\big(G^2,\Bbbk^\times\big)$ such that $\deri(\mu)\omega'=\omega$ defines an equivalence of monoidal categories
\[T_\mu\colon\ \Vect^\omega_G\to \Vect_G^{\omega'},\]
which is the identity as a functor with monoidal structure given by
\[\mu^{T}_{\Bbbk_g,\Bbbk_h}=\mu(g,h)\ide_{\Bbbk_{gh}}\colon\ T_\mu(\Bbbk_g)\otimes T_\mu(\Bbbk_h)\to T_\mu(\Bbbk_{gh}),\]
where $\Bbbk_g$ denotes the $1$-dimensional vector space concentrated at degree $g$.
\end{Lemma}

Next, define the category of \emph{Yetter--Drinfeld (YD) modules} over $\Bbbk G$ twisted with respect to $\omega$. Such a twisted YD module has a $G$-grading (or, equivalently, a $\Bbbk G$-coaction)
\[V=\bigoplus\limits_{d \in G} V_d,\]
 a morphism
 \[a_V\colon\ \Bbbk G\otimes V\to V, \qquad g\otimes v\mapsto g\cdot v,\]
 which satisfies the twisted $\Bbbk G$-module condition that acting twice on the module is given by
\[
 h \cdot ( k \cdot v_d )=\tau (h,k)(d) hk \cdot v_d,
\]
where $\tau (h,k)(d)$ is defined in terms of the $3$-cocycle $\omega$ as follows:
\begin{align}\label{eq-taudef}
 \tau (h,k)(d):=\frac{\omega(h,k,d) \omega \big(hkd(hk)^{-1},h,k\big)}{\omega\big(h,kdk^{-1},k\big)}
\end{align}
and $v_d$ denotes a homogeneous element of $V$ of degree $d$.
The action and $G$-grading satisfy the \emph{YD compatibility condition} that action with $h \in G$ on the $d$-th component $v_d \in V_d$ will bring the component to the conjugated degree by $h$, namely $h \cdot v_d \in V_{h d h^{-1}}$. Morphisms of twisted YD modules are maps of $G$-graded $\Bbbk$-vector spaces $\phi$ that commute with the twisted actions in the sense that
$g\cdot \phi(v)=\phi(g\cdot v)$.

We note that the map $\tau$ can be derived from \cite[Section~1.3.3]{Wil} or \cite[Proposition 3.2]{Maj98} where right twisted modules are used. It can be interpreted as a $2$-cocycle on an appropriately defined groupoid \cite{Wil}.

There is a tensor product of twisted YD modules, which can be defined as the usual tensor product of graded vector spaces: given two such $V$ and $W$, the $d$-th graded component of $V\otimes W$, for $d\in G$, is given by
\begin{align*}
( V \otimes W )_d=\bigoplus\limits_{d=ab} V_a \otimes W_b.
\end{align*}
The module action will be given by
\[
h \cdot (v_d \otimes v_f )=\gamma(h)(d,f) (h \cdot v_d \otimes h \cdot v_f),
\]
with $d,f \in G$ and where
\begin{align}\label{eq-gammadef}
 \gamma (h)(d,f):=\frac{\omega(h,d,f) \omega\big(hdh^{-1},hfh^{-1},h\big)}{\omega\big(hdh^{-1},h,f\big)}.
\end{align}

We check that the tensor product of twisted YD modules is well defined.

 \begin{Lemma}
 The tensor product of two twisted YD modules is itself a twisted YD module.
 \end{Lemma}
 \begin{proof}
We need to check that
\[h \cdot ( k \cdot ( v_d \otimes v_f ) )=\tau (h,k ) ( df ) hk \cdot ( v_d \otimes v_f ).\]
This condition is equivalent to the equality
 \begin{align*}
 \gamma ( k ) ( d,f ) \gamma ( h ) \big( kdk^{-1},kfk^{-1} \big) \tau ( h,k ) ( d ) \tau ( h,k ) ( f ) =\tau ( h,k ) ( df ) \gamma ( hk ) ( d,f ),
 \end{align*}
 which is proven in Appendix \ref{appendix}, Lemma \ref{gammatau}.
 \end{proof}

 \begin{Lemma}
 The tensor product gives twisted YD modules the structure of a monoidal category.
 \end{Lemma}
 \begin{proof}
 The tensor product should be compatible with the monoidal structure morphisms (associator and unitors) of the category of twisted YD modules that we are working on.
 Since in this category the unitors are the identity this is clear, but for the case of the associator (following the opposite convention of \cite{EGNO}), given by:
$\alpha_{\Bbbk_g,\Bbbk_{g'},\Bbbk_{g''}}=\omega^{-1}(g,g',g'') \ide_{\Bbbk_g \otimes \Bbbk_{g'} \otimes \Bbbk_{g''}}$, we need to check that: $\alpha ( h \cdot ( [ \Bbbk_g \otimes \Bbbk_{g'} ] \otimes \Bbbk_{g''} ) )=h \cdot ( \alpha ( [ \Bbbk_g \otimes \Bbbk_{g'} ] \otimes \Bbbk_{g''} ) )$. Substituting the pertinent definitions, this equality amounts to:
\begin{gather*}
\omega^{-1}\!\big(hgh^{-1}\!,hg'h^{-1}\!,hg''h^{-1}\big) \gamma(h)(gg'\!,g'') \gamma(h)(g,g')=\gamma(h)(g,g'g'') \gamma(h)(g'\!,g'') \omega^{-1}(g,g'\!,g''),
\end{gather*}
which is proven in Appendix \ref{appendix}, Lemma \ref{eq-gammacond}. \end{proof}

\begin{Lemma}
 For two twisted YD modules $V$, $W$, there is a braiding given by
 \begin{align*}
 c_{V,W}\colon\ V \otimes W &\to W \otimes V, \\
 v_g \otimes w_h &\mapsto g \cdot w_h \otimes v_g.
 \end{align*}
\end{Lemma}
\begin{proof}
First, $c_{V,W}$ is a morphism of twisted $G$-modules by the identity
\begin{equation}
\gamma(k)(g,h)\tau\big(kgk^{-1},k\big)(h)=\gamma(k)\big(ghg^{-1},g\big)\tau(k,g)(h),\label{eq:gamma-braid}
\end{equation}
which holds by repeated use of the $3$-cocycle in Appendix \ref{3cocycle} with entries
\begin{itemize}\itemsep=0pt
 \item [--] $g_1=kghg^{-1}k^{-1}$, $g_2=k$, $g_3=gk^{-1}$, $g_4=k$,
 \item [--] $g_1=k$, $g_2=ghg^{-1}$, $g_3=gk^{-1}$, $g_4=k$,
 \item [--] $g_1=k$, $g_2=gk^{-1}$, $g_3=khk^{-1}$, $g_4=k$,
 \item [--] $g_1=k$, $g_2=gk^{-1}$, $g_3=k$, $g_4=h$.
\end{itemize}
The fact that $c_{V,W}^{-1}$ is also a morphism of twisted $G$-modules corresponds to the identity
\begin{equation}\label{eq:gamma-braid2}
 \gamma(k)(g,h)\tau\big(kh^{-1}k,k\big)(g)=\gamma(k)\big(h,h^{-1}gh\big)\tau\big(k,h^{-1}\big)(g).
\end{equation}

The braiding axioms are equivalent to the equalities;
\begin{align*}
 &\omega\big(g,hkh^{-1},h\big) = \omega(g,h,k)\omega\big(ghkh^{-1}g^{-1},g,h\big) \tau(g,h)(k)^{-1}, \\
 &\omega^{-1} \big(ghg^{-1}, g,k\big) = \omega^{-1}(g,h,k)\omega^{-1}\big(ghg^{-1},gkg^{-1},g\big) \gamma(g)(h,k)
\end{align*}
 both of which hold by \eqref{eq-taudef} and \eqref{eq-gammadef} respectively.
\end{proof}

The following proposition can be found, working with right twisted actions, in \cite[Section~3]{Maj98}.
\begin{Proposition}\label{prop:twistedYD-ZVect}
There is an equivalence of braided monoidal categories between $\cZ\big(\Vect_G^\omega\big)$ and the category of twisted YD modules over $G$ with respect to $\omega$, with reverse braiding.
\end{Proposition}
\begin{proof}
We only sketch the proof here. To an object $(V,c)$ in $\cZ\big(\Vect_G^\omega\big)$, one associates the morphism $a_V\colon \Bbbk G\otimes V\to V$ defined by $a_V=(\ide_V\otimes \varepsilon)c^{-1}_{\Bbbk G}$, where $\Bbbk G=\bigoplus_{g\in G} \Bbbk_g$ is the direct sum of all simple $G$-graded modules and $\varepsilon(g)=1$ for all $g\in G$. It follows from the tensor product compatibility of $c^{-1}$, that $a_V$ is a twisted $G$-action. Since $c^{-1}$ is a morphism in $\Vect_G^\omega$, the YD compatibility between coaction and twisted action follows. Conversely, a twisted $G$-action $a_V$ on~$V$ can be extend to an inverse half-braiding on an object $X$ in $\Vect_G^\omega$ by setting
\[c^{-1}_X(x_d\otimes v):=a_V(d,v)\otimes x_d \]
for all $x_d\in X_d$, $v\in V$. One verifies that these assignments extend to an equivalence of braided tensor categories.
\end{proof}

We require $\cZ\big(\Vect_G^\omega\big)$ to have a ribbon structure (which thus induces a pivotal structure). A~ribbon structure can be obtained from \cite[Theorem~5.4]{Shi2}. For this, we choose, in the notation of \cite[Section~3.2]{LW2}, the object $V=\one$, which is a square root of the distinguished invertible object $D=\one$ of $\Vect_G^\omega$, together with the identity $V\otimes V^{**}=D=\one$, and $\sigma_{V,X}\colon X\to X^{**}$ the monoidal natural isomorphism identifying an object with its double dual. This way, $\Vect_G^\omega$ is spherical and, hence, $\cZ\big(\Vect_G^\omega\big)$ is a ribbon category. This gives the following result.

\begin{Proposition}\label{prop:Zribbon}
For a twisted YD module $V$, define
\begin{align*}
\theta_V\colon\ V\to V, \qquad \theta_V(v_d)=d\cdot v_d,
\end{align*}
for $v_d\in V_d$. Then $\theta_V$ defines a ribbon twist which makes $\cZ\big(\Vect_G^\omega\big)$ a ribbon category.
\end{Proposition}
\begin{proof}
We have to check that $\theta$ satisfies \eqref{eq:ribbontwist-cond}. One checks that the first condition listed there is equivalent to the cocycle identity
\begin{align*}
 \gamma(df)(d,f)&=\tau\big(dfd^{-1},d\big)(d)\tau(d,f)(f).
\end{align*}
Using the definitions in equations \eqref{eq-taudef} and~\eqref{eq-gammadef}, this identity is equivalent to
the cocycle condition of $\omega$, cf.\ \eqref{3cocycle} with
$g_1=dfdf^{-1}d^{-1}$, $g_2=dfd^{-1}$, $g_3=d$, $g_4=f$. The condition~$\theta_\one=\ide_{\one}$ is clear, while $(\theta_V)^*=\theta_{V^*}$ follows from the fact that if $\{v_i\}$ is a homogeneous basis for~$V$ with dual basis~$\{f_i\}$, then $\{f_i\}$ is a homogeneous basis and $v_i\in V_g$ if and only if $f_i \in V_{g^{-1}}$.
\end{proof}

\section{Results}\label{sec:results}

Throughout the section, we assume that $\Bbbk$ is an algebraically closed field of arbitrary characteristic.

\subsection{Dijkgraaf--Witten categories of quotient groups via local modules}

In this section, we derive a general result on categories of local modules for algebras given by~$R(\one)$, which is a commutative algebra $(R(\one),c)$ in $\cZ(\cC)$ for $R$ the right adjoint of a tensor functor, using results of \cite{BN11,EGNO}. We specify this general result to DW categories of quotient groups.

\begin{Proposition}\label{prop:BN}
Let $\cC$, $\cD$ be tensor categories with a $\Bbbk$-linear, exact, monoidal functor $L \colon \cC \to \cD$ that has a right adjoint $R\colon \cD\to \cC$ which is faithful and exact. Then there is a tensor equivalence $\cD\simeq \Rep_\cC(R(\one),c)$ and a braided tensor equivalence between $\cZ(\cD)$ and $\locmod_{\cZ(\cC)}(R(\one))$.
\end{Proposition}
\begin{proof}
Set $A := R(\mathbbm{1})$. This is a commutative algebra in $\cZ(\cC)$.
By \cite[Proposition 6.1]{BN11}, there is a tensor equivalence $\cD \to \Rep_{\cC}(A)$ for the monoidal category from Definition~\ref{def:RepA-general}. By \cite[Corollary~4.5]{Sch}, we have that
$\cZ(\Rep_{\cC}(A))$ is tensor equivalent to $\locmod_{\cZ(\cC)}(A)$. Combining these two results yields the second claimed equivalence.
\end{proof}

We would like to be able to apply this result to the categories $\cC = \Vect_G^{\omega'}$ and $\cD = \Vect_H^\omega$. By~\cite[Section 2.6]{EGNO}, any monoidal functor $L\colon \Vect_G^{\omega'}\to \Vect_H^\omega$ corresponds to a group homomorphism $l\colon G\to H$ such that $\omega', l^*\omega$ are equal in $H^3(G,\Bbbk^\times)$, with respect to some $\mu\colon G\times G\to \Bbbk^\times$, i.e., $\omega'=(\deri \mu) l^*\omega$. Thus, on the simple objects of $\Vect_G^\omega$, the functor $L$ is given by $\Bbbk_g \mapsto \Bbbk_{l(g)}$.

If we combine this characterization with the adjunction condition, we get that a functor $R \colon \Vect_H^\omega \to \Vect_G^{\omega'}$ is right adjoint to $L$ if it satisfies, on the simple objects, that
\begin{equation*}
 \Hom_{\Vect_G^\omega}(\Bbbk_g,R(\Bbbk_h)) \cong \Hom_{\Vect_H^\omega}(L(\Bbbk_g),\Bbbk_h) \cong \Hom_{\Vect_H^\omega}(\Bbbk_{l(g)},\Bbbk_h).
\end{equation*}
The functor defined, for a an object $V=\bigoplus_{h\in H}V_h$, by
\[R(V)=\bigoplus_{g\in G}R(V)_{g}, \qquad \text{where}\quad R(V)_g := V_{l(g)} \quad \forall g\in G,
\] and, for a morphism $f\colon \bigoplus_h V_h\to \bigoplus_{h}W_h$ in $\Vect_H^\omega$, by
\[R(f)(v_g)= f(v_g)\in W_{l(g)}=R(W)_g, \qquad \text{for}\quad v_g\in V_{l(g)}=R(V)_g,\quad g\in G,\]
satisfies the above condition of being a right adjoint to $L$.

Assume that $V$ is a twisted YD module.
Now, $R(V)$ has the structure of a twisted YD module with action induced by the module action in $\cZ\big(\Vect_H^\omega\big)$, defined on $V_g=V_{l(g)}$, $g\in G$, by $k\cdot v_{l(g)} := l(k)\cdot v_{l(g)}$ for $v_{l(g)}\in V_g$ and $k\in G$, where on the right hand side $v_k\in V_{l(k)}$ is regarded as a vector in $R(V)$.

\begin{Lemma}
The functor $R\colon \Vect_H^{\omega} \to \Vect_G^{\omega'}$ is always exact and faithful if and only if $l\colon G\to H$ is surjective.
\end{Lemma}

In particular, we can apply the functor $R$ to obtain an algebra
\[A:= R(\mathbbm{1}) = \bigoplus\limits_{x \in \ker(l)}\Bbbk_x \] in $\Vect_G^{l^*\omega}$.
As a $\Bbbk$-vector space, A can be given a $\Bbbk$-basis $\{e_x \mid x \in \ker (l)\}$ with
\begin{itemize}\itemsep=0pt
 \item multiplication given by $e_xe_y = \mu(x,y) e_{xy}$,
 \item unit $1_A = e_1$.
\end{itemize}
By \cite[Proposition 6.1]{BN11}, $A$ is a commutative algebra in $\cZ\big(\Vect_G^\omega\big)$.
Proposition~\ref{prop:BN} now implies the following result.

\begin{Corollary}\label{cor:gen-equiv}
Given a surjective group homomorphism $l\colon G\!\to\! H$ and a $3$-cocycle ${\omega\in C^3(H,\Bbbk^\times)}$.
Then there is an equivalence of braided tensor categories $\cZ(\Vect_H^{\omega})\simeq \locmod_{\cZ(\Vect_G^{l^*\omega})}(A)$.
\end{Corollary}

\begin{Example}
Let $G$ be a group, $H$ be the trivial group $\{1\}$, with the cocycles $\omega$, $\omega'$ being trivial. Using the trivial group homomorphism and Corollary~\ref{cor:gen-equiv}, we get that
\[\Vect_\Bbbk=\cZ(\Vect_\Bbbk) \simeq \locmod_{\cZ(\Vect_G)}(A),\]
where $A = \Bbbk G$. Thus the group algebra $\Bbbk G \in \cZ(\Vect_G)$ is trivializing.
\end{Example}

\begin{Example}
Let us take $G$ to be an abelian group, with $H$ a subgroup. Then $H$ is isomorphic to some quotient of $G$, $H \cong G/N$. We can thus take $l \colon G \to H \cong G/N$ to be the quotient map, resulting in $A:= R(\mathbbm{1})$ being defined as an algebra in $\cZ(\Vect_G^{\omega})$, with $A_g =\Bbbk$ when $g \in N$, and the zero vector space otherwise, i.e., $A$ is the group algebra $\Bbbk N$. Thus ${\cZ(\Vect^\omega_{G/N}) \simeq \locmod_{\cZ(\Vect_G^{\omega'})}(\Bbbk N)}$.
\end{Example}

For a general subgroup $H$ of $G$, we cannot directly apply Corollary~\ref{cor:gen-equiv} as there may not be a~group homomorphism from $G$ to $H$.

\subsection{The induction functor}\label{sec:I}
Let $G$ be a group with a $3$-cocycle $\omega\in C^3(G,\Bbbk^\times)$. For a subgroup $H$ of $G$, we denote by $\omega|_H$ the restriction of $\omega$ to $H^3$. We denote the category of $H$-graded vector spaces twisted by $\omega|_H$ simply by $\Vect_H^\omega$.

In this section, we define a functor
\[\rI \colon\ \cZ( \Vect_H^\omega) \to \cZ\big(\Vect_G^\omega\big)\]
and show that this functor is Frobenius monoidal.
In objects, this functor is defined for any~$V \in \cZ( \Vect_H^\omega)$ as $V \mapsto \rI (V):=\mathbbm{k}G \otimes V$, with relations:
\begin{align}\label{eq:relationsIV}
gh \otimes v_d =\tau(g,h)(d)^{-1} g \otimes h \cdot v_d \qquad \text{for}\quad g \in G, \quad h,d \in H.
\end{align}
The $\Bbbk G$-coaction is given by
\begin{gather*}
\delta ( g \otimes v_d ) := gdg^{-1} \otimes g \otimes v_d.
\end{gather*}

\begin{Lemma}\label{lem:IV-twistedYD}
$\rI ( V )$ has the structure of a YD module with action: ${g\! \vartriangleright\! ( k \otimes v_d ):=\tau(g,k)(d) gk\! \otimes\! v_d}$.
\end{Lemma}

\begin{proof}
First, we check that the coaction is compatible with the comultiplication, namely that $ ( \Delta \otimes \ide ) \delta = (\ide \otimes \delta ) \delta$. Here,{\samepage
 \begin{gather*}
 ( \Delta \otimes \ide ) \delta ( g \otimes v_d ) = ( \Delta \otimes \ide ) ( gdg^{-1} \otimes g \otimes v_d ) = gdg^{-1} \otimes gdg^{-1} \otimes g \otimes v_d, \\
 ( \ide \otimes \delta ) \delta ( g \otimes v_d ) = gdg^{-1} \otimes ( g \otimes v_d ) = gdg^{-1} \otimes gdg^{-1} \otimes g \otimes v_d.
 \end{gather*}
Both sides agree.}

Next, the YD condition requires that for any $g, k \in G$, $d \in H$, the composition
 \begin{align*}
 g \otimes ( k \otimes v_d ) & \mapsto ( g \otimes g ) \otimes ( k \otimes v_d )\mapsto \tau ( g,k ) ( d ) g \otimes ( gk \otimes v_d ) \\ & \mapsto \tau ( g,k ) ( d ) g \otimes gkdk^{-1}g^{-1} \otimes ( gk \otimes v_d )
 \\&\mapsto \tau ( g,k ) ( d ) gkdk^{-1} \otimes ( gk \otimes v_d )
 \end{align*}
 needs to be equal to
 \begin{align*}
 g \otimes ( k \otimes v_d ) & \mapsto ( g \otimes g ) \otimes kdk^{-1} \otimes ( k \otimes v_d ) \mapsto \tau ( g,k ) ( d ) gkdk^{-1} \otimes ( gk \otimes v_d ),
 \end{align*}
which match.

For the defined map to be a twisted action, we require that, for any $ g,h,k \in G$, $d \in H$:
 \begin{equation*}
 g \vartriangleright (h \vartriangleright ( k \otimes v_d )) = \tau(g,h)\big(kdk^{-1}\big) gh \vartriangleright (k \otimes v_d ).
 \end{equation*}
By expanding both sides using the proposed action, this is equivalent to requiring that
 \begin{equation*}
 \tau(h,k)(d)\tau(g,hk)(d)ghk \otimes v_d = \tau(g,h)(kdk^{-1})\tau(gh,k)(d)ghk \otimes v_d.
 \end{equation*}
This equality holds by Lemma~\ref{eq-taucond}.
\end{proof}

We claim that the functor $I$ defined above is \emph{op-lax} monoidal with natural transformation
\begin{align*}
 \nu_{V,W} \colon\ \rI ( V \otimes V ) & \to \rI ( V ) \otimes \rI ( W ), \nonumber \\
 g \otimes ( v_d \otimes w_f ) & \mapsto \gamma ( g ) ( d,f ) ( g \otimes v_d ) \otimes ( g \otimes w_f ),
\end{align*}
with $\gamma$ as defined in \eqref{eq-gammadef}, for any $d,f \in H$, and $V,W \in \Ob ( \cZ( \Vect_H^\omega))$. The counit is given by
\begin{equation*}
 \rI(\one)\to \one, \qquad g\otimes 1\mapsto 1.
\end{equation*}

\begin{Lemma}\label{lem:oplax}
The natural transformation $\nu$ equips the functor $\rI$ with an op-lax monoidal structure.
\end{Lemma}

\begin{proof}
Let $k \in G$ and $V,W \in \Ob \big( \cZ\big(\Vect_H^\omega\big) \big)$. For $\nu_{V,W}$ to be a morphism of YD modules, we require that \[\nu_{V,W}(k\vartriangleright (g \otimes (v_d \otimes w_f))) = k \vartriangleright (\nu_{V,W}(g \otimes (v_d \otimes w_f))).\]
We compute that
\begin{gather*}
 \nu_{V,W}(k\vartriangleright (g \otimes (v_d \otimes w_f))) = \gamma(kg)(d,f)\tau(k,g)(df)(kg \otimes v_d) \otimes (kg \otimes w_f), \\
 k \vartriangleright (\nu_{V,W}(g \otimes (v_d \otimes w_f))) = \gamma(g)(d,f) k \vartriangleright ((g\otimes v_d) \otimes (g \otimes w_f)) \\
\qquad{} =\gamma(k)\big(gdg^{-1},gfg^{-1}\big)\gamma(g)(d,f)
 (k\vartriangleright(g \otimes v_d) \otimes k \vartriangleright (g \otimes w_f)) \\
 \qquad{} =\tau(k,g)(d)\tau(k,g)(f)\gamma(k)\big(gdg^{-1},gfg^{-1}\big) \gamma(g)(d,f)(kg \otimes v_d) \otimes (kg \otimes w_f).
\end{gather*}
The two expressions are equal by Lemma~\ref{gammatau}, so $\nu_{V,W}$ is indeed a morphism of twisted YD modules.

To check the conditions of an op-lax monoidal functor, we need to verify that, for any objects $V,W,U \in \Ob \big( \cZ \big( \Vect_H^\omega \big)\big) $, the following morphisms \[
\rI( ( V \otimes W ) \otimes U) \to \rI(V) \otimes (\rI(W) \otimes \rI(U))\]
 are equal
\begin{equation*}
 (\ide_{\rI(V)}\otimes \nu_{W,U})\nu_{V,W\otimes U}I(\alpha_{V,W,U}) = \alpha_{\rI(V),\rI(W),\rI(U)} (\nu_{V,W} \otimes \ide_{\rI(U)}) \nu_{V\otimes W,U}.
\end{equation*}
Evaluating on a vector $(g \otimes ((v_d \otimes w_f) \otimes u_h))$ in $\rI( ( V \otimes W ) \otimes U)$, we compute
\begin{align*}
 &(\ide_{\rI(V)}\otimes \nu_{W,U})\nu_{V,W\otimes U}I(\alpha_{V,W,U})(g \otimes ((v_d \otimes w_f) \otimes u_h)) \\
 &\quad= \gamma(g)(f,h)\gamma(g)(d,fh) \omega^{-1}(d,f,h)((g\otimes v_d)\otimes ((g\otimes w_f) \otimes (g\otimes u_h))),\\
&\alpha_{\rI(V),\rI(W),\rI(U)} (\nu_{V,W} \otimes \ide_{\rI(U)}) \nu_{V\otimes W,U}(g \otimes ((v_d \otimes w_f) \otimes u_h)) \\
 &\quad= \omega^{-1}\big(gdg^{-1},gfg^{-1},ghg^{-1}\big)\gamma(g)(d,f)\gamma(g)(df,h)
((g\otimes v_d)\otimes ((g\otimes w_f) \otimes (g\otimes u_h))).
\end{align*}
These two expressions are equal by Appendix~\ref{eq-gammacond}.

The unitality condition is easily verified, using $\gamma(g)(d,1)=\gamma(g)(1,d)=1$.
\end{proof}

The functor $\rI$ is also \emph{lax} monoidal with the structural natural transformation
\begin{align*}
 \mu_{V,W}\colon\ \rI(V)\otimes \rI(W)&\to \rI(V\otimes W).
\end{align*}
This map $\mu_{V,W}$ sends a vector $(g\otimes v_d)\otimes (k\otimes w_f)$ to zero unless $g^{-1}k\in H$. If $gH=kH$ we can use equation~\eqref{eq:relationsIV} to replace
$(g\otimes v_d)\otimes (k\otimes w_f)$ by a vector of the form $(g\otimes v_d)\otimes (g\otimes w')$, with~$w'$ having degree $g^{-1}kfk^{-1}g$. Hence,
it suffices to describe the image of $\mu_{V,W}$ on vectors of the form $(g\otimes v_d)\otimes (g\otimes w_f)$, with $g\in G$ and $v_d$, $w_f$ homogeneous vectors of degrees $d$ in $V$ and~$f$ in~$W$, respectively,
\[
 \mu_{V,W}((g\otimes v_d)\otimes (g\otimes w_f))=\gamma(g)(d,f)^{-1}g\otimes (v_d\otimes w_f).
\]

In addition, we define the unit of this lax monoidal structure by
\begin{equation}\label{eq:lax-unit}
u\colon\ \one \to I(\one), \qquad 1\mapsto \sum_{i}g_i,
\end{equation}
where $\{g_i\}_{i\in I}$ is a set of representatives for the left cosets of $H$ in $G$, i.e., $G=\coprod\limits_i g_iH$.

\begin{Lemma}\label{lem:lax}
The natural transformation $\mu$ equips the functor $\rI$ with a lax monoidal structure.
\end{Lemma}
\begin{proof}
First, we check that $\mu_{V,W}$ is a well-defined morphism of twisted YD modules over $G$. For $k,g \in G$ and $V,W \in \Ob \big( \cZ\big(\Vect_H^\omega\big) \big)$, we require that \[\mu_{V,W}(k\vartriangleright ((g \otimes v_d) \otimes (g \otimes w_f))) = k \vartriangleright (\nu_{V,W}((g \otimes v_d) \otimes (g \otimes w_f))).\]
We compute that
\begin{gather*}
 k \vartriangleright (\nu_{V,W}((g \otimes v_d) \otimes (g \otimes w_f))) = \tau(k,g)(df)\gamma(g)(d,f)^{-1}kg \otimes (v_d \otimes w_f),\\
 \mu_{V,W}(k\vartriangleright ((g \otimes v_d) \otimes (g \otimes w_f))) \\
 \qquad{} = \gamma(kg)(d,f)^{-1}\tau(k,g)(d)\tau(k,g)(f) \gamma(k)\big(gdg^{-1},gfg^{-1}\big) kg \otimes (v_d \otimes w_f).
\end{gather*}
These two expressions are equal by Lemma~\ref{gammatau}, so $\mu_{V,W}$ is a morphism of twisted YD modules.

Now we have to check the defining diagrams of a lax monoidal structure. Firstly, consider three objects $V$, $W$, $U$ in $\cZ\big(\Vect_H^\omega\big)$. We need to check that the following morphisms
\[(\rI(V)\otimes \rI(W))\otimes \rI(W)\to \rI(V\otimes (W\otimes U))\] are equal
\begin{align*}
 \mu_{V,W\otimes U}(\ide_{\rI(V)}\otimes \mu_{W,U})\alpha_{\rI(V),\rI(W),\rI(U)}=\rI(\alpha_{V,W,U})\mu_{V\otimes W,U}(\mu_{V,W}\otimes \ide_{\rI(W)}).
\end{align*}
It suffices to check this for vectors of the form $((g\otimes v_d)\otimes (g\otimes w_f))\otimes (g\otimes u_h)$.
We compute
\begin{align*}
 &\mu_{V,W\otimes U}(\ide_{\rI(V)}\otimes \mu_{W,U})\alpha_{\rI(V),\rI(W),\rI(U)}(((g\otimes v_d)\otimes (g\otimes w_f))\otimes (g \otimes u_h))\\
 &\qquad =
\gamma^{-1}(g)(d, fh)\gamma^{-1}(g)(f,h)\omega^{-1}\big(gdg^{-1},gfg^{-1},ghg^{-1}\big)g\otimes (v_d\otimes (w_f\otimes u_h)),
 \\
&\rI(\alpha_{V,W,U})\mu_{V\otimes W,U}(\mu_{V,W}\otimes \ide_{\rI(W)})(((g\otimes v_d)\otimes (g\otimes w_f))\otimes (g \otimes u_h))\\
&\qquad = \gamma^{-1}(g)(d,f)\gamma^{-1}(g)(df,h)\omega^{-1}(d,f,h)g\otimes (v_d\otimes (w_f\otimes u_h)).
\end{align*}
The two expressions are equal by Lemma~\ref{eq-gammacond}.

The unitality conditions for $\rI$ are easily verified, using $\gamma(g)(d,1)=\gamma(g)(1,d)=1$.
\end{proof}

\begin{Proposition}\label{prop:Frob}
The functor $\rI\colon \cZ\big(\Vect_H^\omega\big)\to \cZ\big(\Vect_G^\omega\big)$ is a separable Frobenius monoidal functor.
\end{Proposition}
\begin{proof}
The claim that $\rI$ is a Frobenius monoidal functor follows from checking the diagrams in Definition~\ref{def:Frobmonoidal}. This can be tested on vectors of the form $g_i\otimes v_d\in \rI(V)$, where $\{g_i\}$ is a set of left coset representatives. Commutativity of both diagrams amounts to the condition in Lemma~\ref{eq-gammacond}. Finally, $\rI$ is a separable Frobenius monoidal functor as, clearly, $\mu_{V,W}\nu_{V,W}=\ide_{\rI(V\otimes W)}$.
\end{proof}

\begin{Proposition}\label{prop:braidedFrob}
The functor $I$ is both a braided lax monoidal and braided oplax monoidal functor and preserves the ribbon structure.
\end{Proposition}
\begin{proof}
We start by checking that the lax monoidal structure given by $\mu$ is compatible with the braiding. First, we need to check that we can restrict to vectors of the form $ ( g \otimes v_d ) \otimes (g \otimes w_f)$.

Consider the composition $\rI(c_{V,W})\mu_{V,W}$. By our earlier discussion, this is zero on all vectors not of the proposed form. For $\mu_{W,V}c_{\rI(V),\rI(W)}$ on a generic vector in $\rI(V) \otimes \rI(W)$, we get that
\begin{align*}
\mu_{W,V}c_{\rI(V),\rI(W)}(( g \otimes v_d ) \otimes (k \otimes w_f)) &= \mu_{W,V} \big(\big( gdg^{-1} \vartriangleright (k \otimes w_f)\big) \otimes (g \otimes v_d)\big) \\
 &= \tau\big(gdg^{-1},k\big)(f) \mu_{W,V} \big(\big( gdg^{-1}k \otimes w_f\big)\big) \otimes (g \otimes v_d).
\end{align*}
Now this term is non-zero only when $kgd^{-1} \in H$, which is equivalent to requiring $g^{-1}k \in H$. Hence we can restrict to vector to be of the proposed form and compute
\begin{gather*}
 \mu_{W,V}c_{\rI(V),\rI(W)}(( g \otimes v_d ) \otimes (g \otimes w_f)) \\
 \qquad{} = \mu_{W,V} \big(\big( gdg^{-1} \vartriangleright (g \otimes w_f)\big) \otimes (g \otimes v_d)\big) \\
 \qquad{} = \tau\big(gdg^{-1},g\big)(f) \mu_{W,V} ((gd \otimes w_f) \otimes (g \otimes v_d)) \\
\qquad{} = \tau\big(gdg^{-1},g\big)(f)\tau(g,d)(f)^{-1} \mu_{W,V} ((g \otimes d \cdot w_f) \otimes (g \otimes v_d))\\
\qquad{}{}=\tau\big(gdg^{-1},g\big)(f)\tau(g,d)(f)^{-1} \gamma(g)\big(dfd^{-1},d\big)^{-1} g \otimes (d \cdot w_f \otimes v_d),\\
 \rI(c_{V,W})\mu_{V,W} (( g \otimes v_d ) \otimes (g \otimes w_f)) \\
 \qquad{} = \gamma(g)(d,f)^{-1} \rI(c_{V,W}) (g \otimes (v_d \otimes w_f)) = \gamma(g)(d,f)^{-1} g \otimes (d \cdot w_f \otimes v_d).
\end{gather*}
The two expressions are equal using \eqref{eq-taudef} and~\eqref{eq-gammadef}.

The braided oplax monoidal condition follows similarly. We compute that
\begin{gather*}
 \nu_{W,V}\rI(c_{V,W})(g \otimes (v_d \otimes w_f)) = \nu_{W,V}(g \otimes (d \vartriangleright w_f \otimes v_d)) \\
\phantom{ \nu_{W,V}\rI(c_{V,W})(g \otimes (v_d \otimes w_f))}{} = \gamma(g)(dfd^{-1},d)((g\otimes d \vartriangleright w_f) \otimes (g \otimes v_d)), \\
 c_{\rI(V),\rI(W)}\nu_{V,W} (g \otimes (v_d \otimes w_f)) = \gamma(g)(d,f) c_{\rI(V),\rI(W)} ((g \otimes v_d) \otimes (g \otimes w_f)) \\
\phantom{ c_{\rI(V),\rI(W)}\nu_{V,W} (g \otimes (v_d \otimes w_f)) }{} = \gamma(g)(d,f) ((gdg^{-1} \vartriangleright (g \otimes w_f)) \otimes (g \otimes v_d)) \\
 \phantom{ c_{\rI(V),\rI(W)}\nu_{V,W} (g \otimes (v_d \otimes w_f)) }{}= \tau(gdg^{-1},g)(f) \gamma(g)(d,f) ((gd \otimes w_f)) \otimes (g \otimes v_d)) \\
\phantom{ c_{\rI(V),\rI(W)}\nu_{V,W} (g \otimes (v_d \otimes w_f)) }{} = \tau(g,d)(f)^{-1}\tau(gdg^{-1},g)(f) \gamma(g)(d,f)\\
\phantom{ c_{\rI(V),\rI(W)}\nu_{V,W} (g \otimes (v_d \otimes w_f)) }{}\quad\ \cdot((g\otimes d \vartriangleright w_f) \otimes (g \otimes v_d)).
\end{gather*}
Again, these expressions are equal by equations \eqref{eq-taudef} and \eqref{eq-gammadef}.

Further, $I(\theta_V)=\theta_{I(V)}$ with the ribbon structure defined in Remark~\ref{rem:ribbon}.
 Indeed, we compute that
 \begin{align*}
 \theta_{I(V)}(g\otimes v_d)&=gdg^{-1}\vartriangleright (g\otimes v_d)
 =\tau(gdg^{-1},g)(d)gd\otimes v_d\\
 &=\tau(gdg^{-1},g)(d)\tau(g,d)(d)^{-1} g\otimes d\cdot v_d
 =g\otimes d\cdot v_d=I(\theta_V)(g\otimes v_d),
 \end{align*}
 where the second-to-last equality uses equation~\eqref{eq:gamma-braid} with $k=g$, $g=d$, $h=d$.
\end{proof}

\begin{Corollary}\label{cor:Ipreserves}
If $A$ is an algebra $($respectively, coalgebra or Frobenius algebra$)$ in $\cZ( \Vect_H^\omega)$, then $\rI(A)$ is an algebra $($respectively, coalgebra or Frobenius algebra$)$ in $\cZ( \Vect_G^\omega)$. Moreover, if $A$ is commutative $($respectively, cocommutative$)$ in $\cZ( \Vect_H^\omega)$, then $\rI(A)$ is commutative $($respectively, cocommutative$)$ in $\cZ( \Vect_G^\omega)$.
\end{Corollary}

\begin{Example}\label{ex:AH}
The tensor unit $\one$ is a commutative and cocommutative Frobenius algebra in~$\cZ( \Vect_H^\omega)$. Hence, $\rI(\one):=A_H$ inherits these properties. Explicitly, $A_H$ is spanned as a~$\Bbbk$-vector space by $\Set{\delta_{gH}\mid g\in G }$ subject to the relations that $\delta_{gH}=\delta_{kH}$ if and only if $g^{-1}k\in H$. Further, $A_H$ is a twisted YD module via
\[
k\cdot \delta_{gH}=\delta_{kgH}, \qquad \delta(\delta_{gH})=1\otimes \delta_{gH}.
\]
The multiplication and unit are given by
\[\delta_{gH}\delta_{kH}=\begin{cases}\delta_{gH} & \text{if $g^{-1}k\in H$},\\ 0 & \text{otherwise},\end{cases}
\qquad 1_{A_H}=\sum_{i}\delta_{g_iH},\]
for $\Set{g_i}$ a set of $H$-coset representatives. The comultiplication and counit
\[\Delta_{A_H}(\delta_{gH})=\delta_{gH}\otimes \delta_{gH}, \qquad \varepsilon_{A_H}(\delta_{gH})=1\]
make $A_H$ a commutative and cocommutative Frobenius algebra in $\cZ( \Vect_G^\omega)$. We note that, since $A_H$ has trivial $G$-grading, the braiding is simply given by
$a\otimes b\mapsto b\otimes a$, for all $a,b\in A_H$.
\end{Example}

Consider the category $\Rep_{\Vect_G^\omega}(A_H)$ from Definition~\ref{def:RepA-general}. We fix a set of coset representatives~$\Set{g_i}$ of $H$ in $G$ such that $g_1=1$ and denote the corresponding basis of $A_H$ by $\Set{\delta_{g_i}}$. We can now define a functor
\[T\colon\ \Vect_H^\omega\to \Rep_{\Vect_G^\omega}(A_H), \qquad T(V)= A_H\otimes V,\qquad T(f)=\ide_{A_H}\otimes f,\]
where $A_H\otimes V$ is a $\Bbbk G$-comodule via the coaction
\[\delta(\delta_{g_i}\otimes v)=g_i|v|g_i^{-1}\otimes (\delta_{g_i}\otimes v),\]
and a right $A_H$-module via
\[(\delta_i\otimes v)\cdot \delta_j= \delta_{i,j}(\delta_i\otimes v).\]
Next, consider the canonical isomorphisms $\mu^T_{V,W}$ appearing in
\[(A_H\otimes V)\otimes (A_H\otimes W)\to (A_H\otimes V)\otimes_{A_H} (A_H\otimes W)\xrightarrow{\mu^T_{V,W}} A_H\otimes (V\otimes W),\]
which is given by
\[\mu^T_{V,W}((\delta_{g_i}\otimes v)\otimes (\delta_{g_i}\otimes w))=\gamma(g_i)(|v|,|w|)^{-1}\delta_{i,j}\delta_{g_i}\otimes (v\otimes w).\]
\begin{Lemma}\label{lem:T-monoidal}
 The functor $T$ is monoidal.
\end{Lemma}
\begin{proof}
First, we check that $\mu^T_{V,W}$ is obtained as factorization over the relative tensor product~$\otimes_{A_H}$ as stated and becomes an isomorphism. Further, the coherence diagram making $T$ a~monoidal functor follows from Lemma~\ref{eq-gammacond}.
\end{proof}

We will see in Proposition~\ref{prop:RepAH-equiv} below that this functor gives an equivalence of tensor categories when $|G:H|\neq 0$.

\subsection{Local modules over coset algebras}

In this section, we prove that the functor $\rI$ from Section~\ref{sec:I} induces an equivalence of braided monoidal categories between
$\cZ( \Vect_H^\omega)$ and $\locmod_{\cZ( \Vect_G^\omega)}(A_H)$, where
$A_H=\rI(\one)\cong \Bbbk(G/H)$ is the algebra of functions on left cosets of $H$ in $G$.

\begin{Lemma}\label{lem:AH-action}
For any object $V$ in $\cZ\big(\Vect_H^\omega\big)$, $\rI(V)$ is a right local module over the algebra $A_H$ from Example~{\rm \ref{ex:AH}}. The right action is given by
\[
a_{\rI(V)}^r\colon\ \rI(V)\otimes A_H\to \rI(V), \qquad (g\otimes v)\cdot \delta_{kH}=
\begin{cases}
g\otimes v & \text{if $k^{-1}g\in H$,}\\
0 & \text{otherwise.}
\end{cases}
\]
\end{Lemma}
\begin{proof}
The fact that $\rI(V)$ is a right $A_H=\rI(\one)$-module follows from Proposition~\ref{prop:Frob-perserved}\,(c) and Lemma~\ref{lem:lax}.

We check that $\rI(V)$ is a local module. By applying the braiding twice, we obtain
\[
c_{A_H,\rI(V)}c_{\rI(V),A_H}((g\otimes v_d)\otimes \delta_{kH}) = c_{A_H,\rI(V)}(\delta_{gdg^{-1}kH} \otimes (g\otimes v_d)) = (g\otimes v_d) \otimes \delta_{gdg^{-1}kH}.
\]
Applying the right action gives us
\[
(g\otimes v_d) \otimes \delta_{gdg^{-1}kH}=
\begin{cases}
g\otimes v_d & \text{if $k^{-1}gd^{-1}\in H$,}\\
0 & \text{otherwise.}
\end{cases}
\]
As $d\in H$, this is exactly the result of applying the right action only.
\end{proof}

The following result is independent of the choice of ribbon structure for $\cZ\big(\Vect_G^\omega\big)$ and we may use the ribbon structure from Proposition~\ref{prop:Zribbon}.
\begin{Lemma}\label{lem:AH-rigidFrob}
Assume $|G:H|\in \Bbbk^\times$. Then the algebra $A_H$ from Example~{\rm \ref{ex:AH}} is a rigid Frobenius algebra in $\cZ\big(\Vect_G^\omega\big)$.
\end{Lemma}
\begin{proof}
The trivial algebra $\one$ is a commutative Frobenius algebra in $\cZ\big(\Vect_H^\omega\big)$. Hence, ${A_H\!=\!R(\one)}$ is a commutative Frobenius algebra in $\cZ\big(\Vect_G^\omega\big)$ by Corollary~\ref{cor:Ipreserves}. The comultiplication and counit are given by
\[\Delta_{A_H}(\delta_{gH})=\delta_{gH}\otimes \delta_{gH}, \qquad \varepsilon_{A_H}(\delta_{gH})=1.\]
We first check that $A_H$ is a connected algebra. Indeed, as $A_H$ is concentrated in $G$-degree $1$, it is a $G$-module and
\smash{$\Hom_{\cZ(\Vect_G^\omega)}(\one,A_H)\subseteq (A_H)^G$}. The latter space of $G$-invariant elements in $A_H$ is one-dimensional since $A_H$ is given by functions on a transitive $G$-set.
Now, we compute that
\begin{align*}
 m_{A_H} \Delta_{A_H}(\delta_{gH})=\delta_{gH}\delta_{gH}=\delta_{gH}, \qquad \varepsilon_{A_H}(1_{A_H})=|G:H|.
\end{align*}
Since, by assumption, $|G:H|\in \Bbbk^\times$, $A_H$ is a rigid Frobenius algebra of dimension $\dim_j(A_H)=|G:H|$, cf.\ Proposition~\ref{prop:rigidFrobcharacterization}.
\end{proof}

\begin{Proposition}\label{prop:RepAH-equiv}
Assume $|G:H|\in \Bbbk^\times$. The functor $T$ from Lemma~{\rm \ref{lem:T-monoidal}} induces an equivalence of tensor categories from $\Vect_H^\omega$ to $\Rep_{\Vect_G^\omega}(A_H)$.
\end{Proposition}
\begin{proof}
We first check that $T$ is fully faithful. For this, we note that every object $X$ in $\Rep_{\Vect_G^\omega}(A_H)$ has a direct sum decomposition
$X=\oplus_iX^i$, where $X^i$ is the image of the action of the idempotent $\delta_{g_i}\in A_H$. Any morphism of $A_H$-modules preserves this direct sum decomposition. Thus, for given objects $V$, $W$ in $\Vect_H^\omega$, a morphism $f\colon T(V)\to T(W)$ and any $v\in V$,
\[f(g_i\otimes v)=g_i\otimes g(v),\]
for a unique vector $g(v)\in W$. The mapping $g\colon V\to W$ preserves the $H$-grading since conjugation by $g_i$ is bijective. Thus, $f=T(g)$ and $T$ is full. Further, $T$ is faithful as the tensor product~${(-)\otimes_\Bbbk A_H}$ is faithful.

Now, $\Vect_G^\omega$ is a finite tensor category and, as $A_H$ is a rigid Frobenius algebra by Lemma~\ref{lem:AH-rigidFrob} provided that $|G:H|\neq 0$, $\Rep_{\Vect_G^\omega}(A_H)$ is a finite tensor category by \cite[Corollary 4.21]{LW3}. We conclude that $T\colon \Vect_H^\omega\to \Rep_{\Vect_G^\omega}(A_H)$ is a fully faithful tensor functor. Now, equation~\eqref{eq:FPdim-Rep} specifies to
\[\FPdim\big(\Rep_{\Vect_G^\omega}(A_H)\big)=\frac{\FPdim\big(\Vect_G^\omega\big)}{\FPdim(A_H)}=\frac{|G|}{\dim_\Bbbk(A_H)}=\frac{|G|}{\frac{|G|}{|H|}}=|H|=\FPdim\big(\Vect_H^\omega\big).\]
Thus, \cite[Proposition~6.3.3]{EGNO} implies that $T$ gives an equivalence.
\end{proof}

Next, we will extend the equivalence of $\Vect_H^\omega$ and $\Rep_{\Vect_G^\omega}(A_H)$ to Drinfeld centers and local modules using the functor $\rI$ from Section~\ref{sec:I}.

\begin{Lemma}\label{lem:Imonoidal}
The functor $\rI$ induces a monoidal functor
from $\cZ\big(\Vect_H^\omega\big)$ to $\locmod_{\cZ(\Vect_G^\omega)}(A_H)$, the category of local modules over $A_H$. This functor is a ribbon functor if $|G:H|\in \Bbbk^\times$.
\end{Lemma}
\begin{proof}
By Lemma~\ref{lem:AH-action}, it is clear that $\rI$ induces a functor $\rI \colon \cZ\big(\Vect_H^\omega\big) \to \locmod_{\cZ(\Vect_G^\omega)}(A_H)$.
The tensor product of $X,Y \in \locmod_{\cZ(\Vect_G^\omega)}(A_H)$ is the relative tensor product $X\otimes_{A_H} Y$ defined in equation \eqref{eq:rel-tensor}, with the left action given by $a_W^l := a^r_W c_{A_H,W}$.
In $\rI(U) \otimes_{A_H} \rI(V)$, this gives us that $(g \otimes u_d) \otimes (l \otimes v_e) = 0 \text{ if and only if } gH \neq lH$, for non-zero $u_d$, $v_e$.

This suggests that \[\rI(U) \otimes_{A_H} \rI(V) = \rI(U) \otimes \rI(V)/S \qquad\text{with}\quad S = \mathrm{span}_\Bbbk\{(g\otimes u_d) \otimes (l\otimes v_f) \mid l^{-1}g \notin H\}.\]

In order to be compatible with the twisted YD module and local module structure, we need~$S$ to be a subobject in \smash{$\locmod_{\cZ(\Vect_G^\omega)}(A_H)$}.
To see this, we first note that $S$ is $G$-graded as \[\delta((g\otimes u_d) \otimes (l\otimes v_f)) = gdg^{-1}lfl^{-1} \otimes \big( (g\otimes u_d) \otimes (l\otimes v_f) \big)\] gives a $G$-homogeneous spanning set. Secondly, $S$ is closed under the twisted $G$-action. This follows as
\begin{gather*}
k \vartriangleright ((g\otimes u_d) \otimes (l\otimes v_f)) = \gamma(k)\big(gdg^{-1},lfl^{-1}\big)\tau(k,g)(d)\tau(k,l)(f)(kg\otimes u_d) \otimes (kl\otimes v_f) \!\in \! S
\end{gather*}
because $(kl)^{-1}(kg) \notin H$ if and only if $l^{-1}g\notin H$.
Finally, $S$ is closed under the right action of~$A_H$ since\looseness=1
\[((g\otimes u_d) \otimes (l\otimes v_f) ) \cdot \delta_{kh} =
\begin{cases}
(g\otimes u_d) \otimes (l\otimes v_f) & \text{if }kH = lH,\\
0 & \text{else},
\end{cases}
\]
is clearly in $S$. Hence $S$ is a subobject.

As a consequence of this quotient, the op-lax monoidal structure from Lemma \ref{lem:oplax} extends to\looseness=-1
\begin{align*}
 \bar{\nu}_{U,V}\colon\ \rI(U\otimes V) &\longrightarrow \rI(U) \otimes_{A_H} \rI(V), \\
 g \otimes (u_d \otimes u_f) &\mapsto \gamma(g)(d,f) (g \otimes u_d) \otimes (g \otimes v_f),
\end{align*}
also giving an op-lax monoidal structure.

The natural transformation $\ov{\nu}$ is in fact an isomorphism. To observe this, we define the morphism
\begin{align*}
 \Lambda_{U,V} \colon\ \rI(U) \otimes \rI(V) &\longrightarrow \rI(U \otimes V), \\
 (g \otimes u_d) \otimes (l \otimes v_f) &\mapsto
 \begin{cases}
 \lambda(g,d,l,f) g \otimes \big(u_d \otimes g^{-1}l \cdot v_f\big) & \text{if } gH=lH,\\
 0 & \text{else},
 \end{cases}
\end{align*}
where
\[\lambda(g,d,l,f) =\frac{1}{ \tau\big(g,g^{-1}l\big)(f)\gamma(g)\big(d,g^{-1}lfl^{-1}g\big)}. \]
For this morphism to be a morphism of YD modules, it needs the equality
\[
\lambda(g,d,l,f)\tau(k,g)\big(dg^{-1}lfl^{-1}g\big) = \gamma(k)\big(gdg^{-1},lfl^{-1}\big)\tau(k,g)(d)\tau(k,l)(f)\lambda(kg,d,kl,f)
\]
to hold, which follows straightforwardly from Lemmas~\ref{eq-taucond} and \ref{gammatau}. As the kernel of this morphism $\Lambda_{U,V}$ contains $S$, it induces a quotient morphism $\bar{\Lambda}_{U,V}\colon \rI(U) \otimes_{A_H} \rI(V) \to \rI(U \otimes V)$ in $\locmod_{\Vect_G^\omega}(A_H)$.

{\samepage The morphism $\bar{\Lambda}_{U,V}$ is inverse to $\ov{\nu}_{U,V}$. Indeed, we compute that
\begin{gather*}
 \bar{\nu}_{U,V}\bar{\Lambda}_{U,V}((g\otimes u_d) \otimes (l \otimes v_f)) = \lambda(g,d,l,f)\bar{\nu}_{U,V}\big(g \otimes (u_d \otimes g^{-1}l\cdot v_f)\big) \\
 \phantom{\bar{\nu}_{U,V}\bar{\Lambda}_{U,V}((g\otimes u_d) \otimes (l \otimes v_f)) }{}=\gamma(g)\big(d,g^{-1}lfl^{-1}g\big)\lambda(g,d,l,f)\big((g \otimes u_d) \otimes \big(g \otimes g^{-1}l\cdot v_f\big)\big)\\
\phantom{\bar{\nu}_{U,V}\bar{\Lambda}_{U,V}((g\otimes u_d) \otimes (l \otimes v_f)) }{} =\tau\big(g,g^{-1}l\big)(f)\gamma(g)\big(d,g^{-1}lfl^{-1}g\big)\lambda(g,d,l,f)\\
\phantom{\bar{\nu}_{U,V}\bar{\Lambda}_{U,V}((g\otimes u_d) \otimes (l \otimes v_f))= }{} \cdot ((g \otimes u_d) \otimes (l \otimes v_f))\\
\phantom{\bar{\nu}_{U,V}\bar{\Lambda}_{U,V}((g\otimes u_d) \otimes (l \otimes v_f)) }{} =((g \otimes u_d) \otimes (l \otimes v_f)),\\
 \bar{\Lambda}_{U,V}\bar{\nu}_{U,V}(g \otimes (u_d \otimes v_f)) =\gamma(g)(d,f) \bar{\Lambda}_{U,V}((g \otimes u_d) \otimes (g \otimes v_f)) \\
\phantom{\bar{\Lambda}_{U,V}\bar{\nu}_{U,V}(g \otimes (u_d \otimes v_f))}{} = \lambda(g,d,g,f)\gamma(g)(d,f)g \otimes (u_d \otimes v_f)\\
\phantom{\bar{\Lambda}_{U,V}\bar{\nu}_{U,V}(g \otimes (u_d \otimes v_f))}{} = g \otimes (u_d \otimes v_f).
\end{gather*}
Hence $\bar{\nu}$ is a natural isomorphism and thus $I$ is a monoidal functor.}

To see that $I$ is a braided monoidal functor, consider the diagram
\[
\vcenter{\hbox{\xymatrix{
\rI(U) \otimes \rI(V)\ar[r]\ar[d]^{c_{\rI(U),\rI(V)}} & \rI(U) \otimes_{A_H} \rI(V) \ar[r]^-{\bar{\nu}_{U,V}}\ar[d]^{c'_{U,V}} & \rI(U\otimes V) \ar[d]^{\rI(c_{U,V})}\\
\rI(V) \otimes \rI(U)\ar[r] & \rI(V) \otimes_{A_H} \rI(U) \ar[r]^-{\bar{\nu}_{V,U}} & \rI(V\otimes U).
}}}
\]
The left-most square commutes by definition of the braiding in $\locmod_{\cZ(\Vect_G^\omega)}(A_H)$, and the perimeter commutes by naturality. Hence the right-most square commutes, which is exactly the condition for the functor $\rI$ to be compatible with the braiding.

Now assume $|G:H|\in \Bbbk^\times$. Then as, by Lemma~\ref{lem:AH-rigidFrob}, $A_H$ is a rigid Frobenius algebra, $\locmod_{\cZ(\Vect_G^\omega)}(A_H)$ is a ribbon category by \cite{KO,LW3}. To check compatibility with the twist, recall that $I$ is a ribbon functor to $\cZ\big(\Vect_G^\omega\big)$, see Proposition~\ref{prop:braidedFrob}. Further recall the explicit form of the ribbon twist $\hat{\theta}_V$ on categories of local modules over $A=A_H$ from \cite[Proposition~4.23]{LW3},
\[\hat{\theta}_V= a^r_V(\theta_V\otimes \ide_A)(a_V^r\otimes \ide_A)(\ide_V\otimes q),\]
where $d=\dim_j(A)$ and $q\colon \one \to A\otimes A$ is an inverse to the pairing $p=\varepsilon_A m$. In the case of $A=A_H$,
\[p(\delta_{gH}\otimes \delta_{kH})=\begin{cases}1 & \text{if $g^{-1}k\in H$},\\
0 & \text{else},\end{cases} \qquad q=\sum_i \delta_{g_iH}\otimes \delta_{g_iH}.\]
Thus, we can evaluate the twist $I(V)$, to obtain
\begin{align*}
\hat{\theta}_V(g_j\otimes v_d)&=\sum_{i}\theta_{I(V)}\bigg(\sum_{i} (g_j\otimes v_d)\cdot \delta_{g_iH}\bigg)\cdot \delta_{g_iH}\\
&=(g_j\otimes d\cdot v_d)\cdot \delta_{g_jH}=g_j\otimes d\cdot v_d=I(\theta_V)(g_j\otimes v_d),
\end{align*}
where we use the right $A_H$-action on $I(V)$ from Lemma~\ref{lem:AH-action}, the twist from Remark~\ref{rem:ribbon}, and Proposition~\ref{prop:braidedFrob} in the final step. This proves that $I$ is a ribbon functor to the category of local modules over $A_H$ as claimed.
\end{proof}

The following result was proved in \cite[Theorem 3.7]{DS}. We give a proof using the functor $\rI$ defined above.

\begin{Theorem}\label{thm:Iequiv}
The functor $\rI$ defines an equivalence of monoidal categories between $\cZ\big(\Vect_H^\omega\big)$ and $\locmod_{\cZ(\Vect_G^\omega)}(A_H)$.
\end{Theorem}
\begin{proof}
We first show that $I$ is faithful. On morphisms, $I$ is given by the map
\[\Hom_{\cZ(\Vect_H^\omega)}(V,W)\to \Hom_{\locmod_{\cZ(\Vect_G^\omega)}(A_H)}(\Bbbk G \otimes V,\Bbbk G \otimes W), \qquad p\mapsto I(p)=\ide_{A_H}\otimes p.\]
As this map is injective, seen, for example, by restricting to the subspace $1\otimes V$ of $I(V)$, we see that $I$ is faithful.

To prove that $I$ is full, suppose $q\colon \Bbbk G \otimes_{A_H} V\to \Bbbk G \otimes_{A_H} W$ is a morphism in $\locmod_{\Vect_G^\omega}(A_H)$.
Then,
for all $g,k\in G$, $v\in V$, we have
 \[q(g\otimes v) \cdot \delta_{kH} = q((g\otimes v) \cdot \delta_{kH}) =
\begin{cases}
q(g\otimes v) & \text{if $k^{-1}g \in H$,}\\
0 & \text{otherwise}.
\end{cases}
\]
We choose a set of coset representatives $\{g_i\}$ for $H$ in $G$ such that $g_1=1$.
Using the above and equation~\eqref{eq:relationsIV}, we see that the $q(g_i\otimes v)$ is contained in the subspace $g_i\otimes W$ of $I(W)$. Hence, there exists a vector $w_i\in W$ such that
$q(g_i\otimes v) = g_i\otimes w_i$. However, $q$ commutes with the twisted left $G$-action, which implies that
\[
g_i\otimes w_i=q(g_i\otimes v)= q(g_i\cdot (1\otimes v))=g_i\cdot q(1\otimes v)=g_i\cdot (1\otimes w_1)=g_i\otimes w_1.
\]
Thus, $w_i=w_1$ for all $i$. Hence, we obtain a $\Bbbk$-linear map $q'\colon V\to W$, $v\mapsto w_1$. This map satisfies $I(q')=q$, i.e.,
\[q(g\otimes v)=g\otimes q'(v)\in I(W) \qquad \forall g\in G,\quad v\in V.\]
By restricting to $g\in H$, it follows that $q'\colon V\to W$ is a morphism of twisted YD modules over~$H$. This proves that $I$ is full.

It remains to show that $I$ is essentially surjective. For this, take a local module $L \in \locmod_{\cZ(\Vect_G^\omega)}(A_H)$. The actions of the idempotent elements $\delta_{gH}$ of $A_H$ define a family of idempotent endomorphisms
\[e_g\colon\ L\to L, \qquad l\mapsto l\cdot \delta_{gH},\qquad e_g\in \End_{\cZ(\Vect_G^\omega)}(L).\]
Setting $L^i = \Img(e_{g_i})$ gives a direct sum decomposition $L=\bigoplus\limits_{i}L^{i}$. Here, we use that \[1_{A_H}=\sum\limits_{i}\delta_{g_iH}.\]

We now observe that $l\in L^i$ if and only if~$g_jg_i^{-1}\cdot l\in L^j$. In particular, $l\in L^1$ if and only if~$g_il \in L^i$. Further, $L^1$ is a submodule of $L$ under the left twisted $H$-action. The assumption that $L$ is a local module implies that if $l_d\in L^1$ has degree $|l|=d$, then
$l=l\cdot \delta_{dH}$. We can write~$d=g_i h$, with $h\in H$, and find that $l\in L^i$. However, as the subspaces $L^i$ intersect trivially it follows that $d\in H$. Thus, $L^1$ correspond to an object in $\cZ\big(\Vect_H^\omega\big)$.

Using the twisted right $G$-action on $L$, we define a map
\begin{align*}
\pi\colon\ \Bbbk G \otimes L_1 \longrightarrow L, \qquad
g \otimes l \mapsto g\cdot l.
\end{align*}
The map $\pi$ is surjective. Indeed, $L_1$ is given by all elements of the form $l\cdot \delta_{H}$, with $l\in L$. Now,
\begin{align*}
g_i \left(\frac{\big(g_i^{-1}l\big)}{\gamma(g_i)(g_i,|l|)}\cdot \delta_{H}\right)&=\frac{\tau(g_i)\big(g_i|l|g_i^{-1},1\big)}{\gamma(g_i)(g_i,|l|)}\big(g_i\big(g_i^{-1}l\big)\big)\cdot (g_i\delta_{H})\\
&=\frac{\gamma(g_i)(g_i,|l|)}{\gamma(g_i)(g_i,|l|)}l\cdot \delta_{g_iH}=l\cdot \delta_{g_iH}.
\end{align*}
Thus, $l\cdot \delta_{g_iH}$ is in the image of $\pi$ and hence, for any $l\in L$,
$l=\sum_i l\cdot \delta_{g_iH}\in \Img(\pi).$
It follows that
\[\pi(gh \otimes v_d)=(gh)l=\tau(g,h)(d)^{-1}g(hl)= \tau(g,h)(d)^{-1}g \otimes h v_d.\]
Thus, by equation~\eqref{eq:relationsIV}, $\pi$ descents to a quotient map
$\ov{\pi}\colon I\big(L^1\big) \to L$
which is still surjective. The right twisted action by $g\in G$ gives an isomorphism of vector spaces and hence $\dim(L^i)=\dim\big(L^1\big)$ for all $i$. This shows that
\[\dim L = |G:H|\dim_\Bbbk L^1=\dim_\Bbbk I\big(L^1\big).\]
Thus, $\ov{\pi}$ is injective and hence gives an isomorphism $I\big(L^1\big)\cong L$.
\end{proof}

\begin{Corollary}
If $|G:H|\in \Bbbk^\times$, then the equivalence from Theorem~{\rm \ref{thm:Iequiv}} is an equivalence of ribbon categories.
\end{Corollary}
\begin{proof}
This statement is now a direct consequence of Theorem~\ref{thm:Iequiv} and Lemma~\ref{lem:Imonoidal}.
\end{proof}

\begin{Example}
Let $H=\{1\}$ be the trivial subgroup of $G$. Then it follows that \[\locmod_{\cZ(\Vect_G^\omega)}(A_{\{1\}})\simeq \Vect_{\Bbbk},\]
i.e., $A_{\{1\}}$ is a trivializing algebra in $\cZ\big(\Vect_G^\omega\big)$ provided that $|G|\in \Bbbk^\times$. In fact, for any subgroup~$H$ of $G$ with $|G:H|\in \Bbbk^\times$, we obtain $\cZ\big(\Vect_H^\omega\big)$ as local modules over an algebra in $\cZ\big(\Vect_G^\omega\big)$. This provides a correspondence between ribbon categories, cf.\ \cite[Section~1.4]{FFRS}.
\end{Example}

\subsection{The classification of rigid Frobenius algebras}

In this section, we apply the Frobenius monoidal functors from Section~\ref{sec:I} in order to recover the classification of rigid Frobenius algebras in $\cZ(\lmod{\Bbbk G})\simeq \cZ(\Vect_G)$ from \cite[Theorem 3.15]{DS}, and generalize this result to algebraically closed fields of arbitrary characteristic. For the case of a trivial $3$-cocycle $\omega=1$, this recovers \cite[Theorem~3.5.1]{Dav3}, for $\cha \Bbbk=0$, and \cite[Theorem~6.14]{LW3} for general algebraically closed fields. In fact, as in \cite{DS}, we obtain a classification of all connected \'etale algebras in $\cZ\big(\Vect_G^\omega\big)$ which turn out to have trivial twist and are, hence, rigid Frobenius algebras if their quantum dimension is non-zero.

\begin{Notation}[input data $H$, $N$, $\omega$, $\kappa$, $\epsilon$] \label{not:cocycledata}
Let $H$ be a finite group with a $3$-cocycle $\omega\in H^3(H,\Bbbk^\times)$, $N\triangleleft H$ a normal subgroup.
Further, let $\kappa \colon N\times N\to \Bbbk^\times$ satisfy
\begin{align}\label{eq:tau}
 \omega(n,m,k) =\kappa(n,m)\kappa(m,k)^{-1}\kappa(nm,k)\kappa(n,mk)^{-1}, \qquad \kappa(n,1)=\kappa(1,n)=1,
\end{align}
for all $n,m,k\in N$. In addition, let
\[\epsilon\colon\ H\times N\to \Bbbk^\times, \qquad (h,n)\mapsto \epsilon_h(n)\]
be a map satisfying, for all $h,k\in H$ and $n,m\in N$, that
\begin{gather}
 \tau(h,k)(n)=\frac{\epsilon_h\big(knk^{-1}\big)\epsilon_k(n)}{\epsilon_{hk}(n)}, \label{eq:Davcond1}\\
 \gamma(h)(n,m)=\frac{\epsilon_h(nm)}{\epsilon_h(n)\epsilon_h(m)}\cdot\frac{\kappa\big(hnh^{-1},hmh^{-1}\big)}{\kappa(n,m)}, \label{eq:Davcond2}\\
 \kappa\big(nmn^{-1},n\big)=\epsilon_n(m)\kappa(n,m).
 \label{eq:Davcond3}
\end{gather}
In particular, the normalized condition on $\kappa$, along with \eqref{eq:Davcond2} and \eqref{eq:Davcond3}, respectively, imply that
\begin{align*}
 \epsilon_{h}(1) = 1, \qquad \text{and} \qquad \epsilon_{1}(n) = 1. 
\end{align*}
\end{Notation}

\begin{Remark}\label{rem:Ntriv}
 The maps $\epsilon(h,n):=\epsilon_{h}(n)$ and $\kappa$ define a normalized element $\epsilon\oplus \kappa$ in the truncated total complex $\wF^2(H,N,\Bbbk^\times)$, where $H$ acts on $N$ by conjugation, see Appendix~\ref{appendix:coh-crossed}. Equations \eqref{eq:tau}--\eqref{eq:Davcond2} are equivalent to
 \begin{equation*}
 \deri_{\Tot}^2(\epsilon\oplus \kappa)=\tau\oplus \gamma \oplus \omega,
 \end{equation*}
 where $\tau(h_1,h_2,n)=\tau(h_1,h_2)(n)$, $\gamma(h,n_1,n_2)=\gamma(h)(n_1,n_2)$.
 In particular, the restriction of $\omega$ to a $3$-cocycle on $N$ is trivial in $H^3(N,\Bbbk^\times)$.
\end{Remark}

Now assume given a finite group $G$ with a $3$-cocycle $\omega\in H^3(G,\Bbbk^\times)$.
The isomorphism classes of rigid Frobenius algebras in $\cZ\big(\Vect_G^\omega\big)$ will be parametrized by the data in Notation~\ref{not:cocycledata} for $H$ a subgroup of $G$ and the restriction of $\omega$ to $H$. We will denote such algebras by $A=A(H,N,\kappa,\epsilon)$ in Theorem~\ref{thm:classification} below. These algebras $A$ will be equal to $\rI(B)$ for a rigid Frobenius algebra $B=B(N,\kappa,\epsilon)$ in $\cZ\big(\Vect_{H}^\omega\big)$ using the Frobenius monoidal functor $\rI$ from Propositions~\ref{prop:Frob} and~\ref{prop:braidedFrob}.

\begin{Proposition}[algebras $B(N,\kappa,\epsilon)$] \label{prop:B-algebra} Assume given a tuple $(H,N,\omega,\kappa,\varepsilon)$ as in Notation~{\rm \ref{not:cocycledata}}.
Consider the $\Bbbk$-vector space $B(N,\kappa, \epsilon)$ with $\Bbbk$-basis $\{ e_n \mid n\in N\}$, and define
 \begin{enumerate}\itemsep=0pt
 \item[$(i)$] $h \cdot e_n=\epsilon_h(n) e_{hnh^{-1}}$, for $h\in H$;
 \item[$(ii)$] $\delta(e_n)=n\otimes e_n$, that is, $e_n$ is homogeneous of degree $n\in H$;
 \item[$(iii)$] multiplication $m_B$ given by $e_n e_m = \kappa(n,m)^{-1} e_{nm}$ for all $ n,m\in N$;
 \item[$(iv)$] unit $1_B = e_{1}$.
 \end{enumerate}
 Then $B(N,\kappa, \epsilon)$ is a connected, commutative algebra in $\cZ\big(\Vect_H^\omega\big)$ described as a twisted YD module.
\end{Proposition}

The following is an analogue of \cite[Proposition~3.11]{DS}, \cite[Proposition~3.4.2]{Dav3}.
\begin{Proposition}\label{prop:B-classification}
Recall the data from Notation~{\rm \ref{not:cocycledata}}.
 Let $B$ be an \'etale algebra in $\cZ\big(\Vect_H^\omega\big)$ such that $B_1=\Bbbk$ and $\dim_j(B)\neq 0$.
 Then $B$ is isomorphic as an algebra in $\cZ\big(\Vect_H^\omega\big)$ to~$B(N,\kappa,\epsilon)$ for $N=\operatorname{Supp}(B)=\Set{h\in H \mid B_h\neq 0 }$.
\end{Proposition}

\begin{proof}
As $B$ is \'etale, it is commutative and separable by definition. Separability implies that the restriction of the multiplication defines a non-degenerate pairing $B_h\otimes B_{h^{-1}}\to B_1=\Bbbk$, e.g., by Proposition~\ref{prop:rigidFrobcharacterization}.
This implies that any non-zero element $b\in B_h$ is a unit in $B$. Thus, $ab\neq 0$ in $B_{hk}$ provided that $a\in B_h$, $b\in B_k$ are non-zero. This shows that $N=\operatorname{Supp}(B)$ is a subgroup of $H$.
Further, $N$ is a normal subgroup of $H$ by the twisted YD condition.
One argues as in \cite[Lemma~3.4.1]{Dav3} that $\dim_\Bbbk B_h\leq 1$ for any $h\in H$.

Now, we can choose a $\Bbbk$-basis $\Set{e_n}_{n\in N}$ for $B$. Then, as $\dim_\Bbbk B_h\leq 1$, the multiplication in $B$ is determined by scalars $\kappa(n,m)\in \Bbbk^\times$ satisfying
\begin{equation*}e_n e_m = \kappa(n,m)^{-1}e_{nm} \qquad \forall n,m\in N.
\end{equation*}
Further, the left $\Bbbk H$-action is determined by scalars $\epsilon_h(n)\in \Bbbk^\times$ which satisfy
\begin{equation*}
 h\cdot e_n = \epsilon_h(n)e_{hnh^{-1}} \qquad \forall h\in H, \quad n\in N.
\end{equation*}
Together with the given $3$-cocycle $\omega$ this gives us a tuple $(H,N,\omega,\kappa,\varepsilon)$ as in Notation~\ref{not:cocycledata}, where it follows from $B$ being an algebra in $\cZ\big(\Vect_H^\omega\big)$ that the conditions in equations \eqref{eq:tau}--\eqref{eq:Davcond3} hold. In particular,
\begin{itemize}\itemsep=0pt
 \item equation~\eqref{eq:tau} corresponds to $m_B$ being associative and unital,
 \item equation~\eqref{eq:Davcond1} corresponds to $B$ being a YD module,
 \item equation~\eqref{eq:Davcond2} corresponds to $m_B$ being a morphism of YD modules,
 \item equation~\eqref{eq:Davcond3} corresponds to $m_B$ being commutative in $\cZ\big(\Vect_H^\omega\big)$.
\end{itemize}
This completes the proof.
\end{proof}

\begin{Proposition}\label{prop:Brigid}
Assume that $|N| \in \Bbbk^\times$.
The algebras $B=B(N,\kappa, \epsilon)$ defined in Proposition~{\rm \ref{prop:B-algebra}} are rigid Frobenius algebras in $\cZ\big(\Vect_{H}^\omega\big)$ with coalgebra structure given by
 \[\Delta_B(e_n) = \sum_{m\in N}\kappa\big(m,m^{-1}n\big) \;e_{m}\otimes e_{m^{-1}n}, \qquad \varepsilon_B(e_n) = \delta_{n,1}, \qquad \text{for all}\quad n\in N.\]
\end{Proposition}
\begin{proof}

Consider the algebra $B=B(N,\kappa, \epsilon)$. One readily verifies that the conditions in Notation~\ref{not:cocycledata} are sufficient to ensure that $B(N,\kappa,\varepsilon)$ is a commutative algebra in $\cZ\big(\Vect_H^\omega\big)$ (cf.\ the bullet points in the proof of Proposition~\ref{prop:B-algebra}).

It remains to check that such an algebra $B$ in, in fact, a rigid Frobenius algebra. This is argued as in \cite[Proposition~6.12\,(2)]{LW3}. First, $B$ is connected since \[\Hom_{\cZ\big(\Vect_H^\omega\big)}(\one,B)\subseteq \Hom_{\lcomod{\Bbbk H}}(\one,B)\subseteq B_1,\]
with the containing space being $1$-dimensional.

Next, we define $\varepsilon_B$ and check that it is indeed a morphism of twisted $H$-YD modules. Further, one checks, using the map
\[q=\sum_{n\in N}\frac{\kappa\big(n^{-1},n\big)}{\omega\big(n^{-1},n,n^{-1}\big)}e_n\otimes e_{n^{-1}}\]
that the pairing $p:= \varepsilon_B m_B\colon B\otimes B\to \one$ is non-degenerate.
Then, the coproduct
\[\Delta_B(e_n) = \sum_{m\in N}\frac{\kappa\big(m^{-1},m\big)}{\kappa\big(m^{-1},n\big)}\frac{\omega^{-1}\big(m,m^{-1},n\big)}{\omega\big(m^{-1},m,m^{-1}\big)}\;e_{m}\otimes e_{m^{-1}n} \stackrel{\text{\eqref{eq:tau}}}{=}\sum_{m\in N}\kappa\big(m,m^{-1}n\big) \;e_{m}\otimes e_{m^{-1}n}\]
is obtained from the multiplication and the pairing
$\varepsilon$ following Remark~\ref{rem:Frobpairing}.
This way, $\Delta_B$, $\varepsilon_B$ make $B$ a coalgebra in $\cZ\big(\Vect_H^\omega\big)$.

In fact, the algebra and coalgebra structures satisfy the Frobenius conditions from Definition~\ref{def:co-alg-Frob}\,(3). Verifying that $B$ is a special Frobenius algebra amounts to the computations that~$m_B \Delta_B (e_n)=|N|\ide_B$, and $\varepsilon_B(1_B)=1$.
Hence, $B$ is a rigid Frobenius algebra since~$|N|\neq 0$.
\end{proof}

\begin{Remark}
By forgetting the twisted YD module structure, we can view $B(N,\kappa,\epsilon)$ as a~special Frobenius algebra in $\Vect_G^{\omega}$. The images under the forgetful functor are twisted group algebras $A(N,\psi)$ \cite{Nat,Ost}. These twisted group algebras were used to classify indecomposable module categories over $\Vect_G^\omega$ by Ostrik and Natale, cf.\ \cite[Example~9.7.2]{EGNO}. An explicit Frobenius algebra structure for $A(N,\psi)$ was given in \cite[Proposition 5.7]{WINART}.
Under the identifications $n=g$, $m=gh$ and $\kappa = \psi^{-1}$, the multiplication, unit and counit of $B(N,\kappa,\epsilon)$ match those of $A(N,\psi)$ up to normalisation.
The coproduct formula for $B(N,\kappa,\epsilon)$ becomes
\[
\Delta_B(e_g) = \sum_{h\in N} \kappa\big(gh,h^{-1}\big) \;e_{gh}\otimes e_{h^{-1}}.
\]
Compared to that of $A(N,\psi)$, the coproduct is the same, up to normalisation by $1/|N|$.
 The above results hence show that $\epsilon\colon H\times N \to \Bbbk^\times $ parametrize lifts of the algebras $A(N,\kappa)$ to rigid Frobenius algebras $B(N,\kappa,\epsilon)$ in the center. Such a lift can only exist if $\epsilon$, $\kappa$ satisfy the conditions of Notation~\ref{not:cocycledata}.
\end{Remark}

We will now give an explicit description of the commutative Frobenius algebras $A=\rI(B)$ in~$\cZ\big(\Vect_G^\omega\big)$ for $B=B(N,\kappa,\epsilon)$ as above. Note that Corollary~\ref{cor:Ipreserves} ensures that $A$ is a~commutative Frobenius algebra, but we will see that it is, in fact, also a rigid Frobenius algebra.

\begin{Definition}[$A(H,N,\kappa,\epsilon)$]\label{def:A}
Let $G$ be a group with $\omega\in C^3(G,\Bbbk^\times)$, a subgroup $H$ of $G$,
and a tuple $(N,\kappa,\varepsilon)$ as in Notation~\ref{not:cocycledata}.
We define $A=A(H,N,\kappa,\epsilon)$ to be the commutative Frobenius algebra $\rI(B)$ for $B=B(N,\kappa,\varepsilon)$ from Proposition~\ref{prop:Brigid}.

Further, we fix a coset decomposition $G=\bigsqcup_{i\in I} g_i H$.
\end{Definition}

\begin{Lemma}\label{lem:AFrobenius}
 Explicitly, we can describe the structure of $A=A(H,N,\kappa,\epsilon)$ as a Frobenius algebra in $\cZ\big(\Vect_G^\omega\big)$ as follows.
\begin{enumerate}\itemsep=0pt
 \item[$(a)$] $A$ is the quotient $\Bbbk$-vector space spanned by $\{ a_{g,n} \mid g \in G, n\in N\}$, subject to the relations
 \begin{align}
 a_{gh,n}=\tau(g,h)(n)^{-1}\epsilon_h(n)a_{g,hnh^{-1}}, \qquad \forall h\in H.\label{eq:Davquotrel}
\end{align}
\item[$(b)$] The twisted YD module structure is given by
 \begin{enumerate}\itemsep=0pt
 \item[$(i)$] left $\Bbbk G$-coaction given by $\delta(a_{g,n})=g ng^{-1}\otimes a_{g,n}$;
 \item[$(ii)$] twisted $G$-action given by $k\cdot a_{g,n}=\tau(k,g)(n)a_{kg,n}$ for $k\in G$.
 \end{enumerate}
\item[$(c)$] The Frobenius algebras structure is given by the
 \begin{enumerate}
 \item[$(iii)$] multiplication $m_A$ given by \[a_{g,n}a_{g,m}=\gamma(g)(n,m)^{-1}\kappa(n,m)^{-1}a_{g,nm},
 \] for $g\in G$ and $n,m\in N$, and $a_{g,n}a_{k,m}=0$ if $kH\neq gH$;
 \item[$(iv)$] unit $u_A$ given by $1_A =\sum\limits_{i\in I}a_{g_i,1}$;
 \item[$(v)$] coproduct $\Delta_A$ given by
 \[\Delta_A(a_{g,n}) = \sum_{m\in N}\gamma(g)\big(m,m^{-1}n\big)\kappa\big(m,m^{-1}n\big) \;a_{g,m}\otimes a_{g,m^{-1}n},\]
 for all $g\in G$ and $n\in N$;
 \item[$(vi)$] counit $\varepsilon_A$ given by
 $\varepsilon_A(a_{g,n}) = \delta_{n,1}.$
 \end{enumerate}
\end{enumerate}
\end{Lemma}
\begin{proof}We set $a_{g,n}:=g\otimes e_n\in A=\rI(B)$.
The relations on $A$ in \eqref{eq:Davquotrel} are then derived from equation~\eqref{eq:relationsIV} using the twisted $H$-action from Proposition~\ref{prop:B-algebra}\,(i). This proves (a). To obtain the formulas in (b) we apply the twisted YD module structure on $A=\rI(B)$ from Lemma~\ref{lem:IV-twistedYD}.

To find the Frobenius algebra structure on $A$ displayed in (c) we use Corollary~\ref{cor:Ipreserves}. Thus, computing multiplication and unit involves the lax monoidal structure of $\rI$, see Lemma~\ref{lem:lax}. It suffices to evaluate the product on elements $a_{g,n}a_{g,m}$ as a product $a_{g,n}a_{k,n}=0$ if $kH\neq gH$. If~$kH=gH$, we can find $n'$ such that $a_{k,n}=a_{g,n'}$ as argued before Lemma~\ref{lem:lax}.
We compute
\[a_{g,n}a_{g,m}=\gamma(g)(n,m)^{-1}(g\otimes e_n\cdot e_m)=\gamma(g)(n,m)^{-1}\kappa(n,m)^{-1}a_{g,nm}.\]
The unit is given by $\Bbbk \to A, 1\mapsto 1_A= \sum_{i}g_i\otimes 1=\sum_ia_{g_i,1}$, using equation~\eqref{eq:lax-unit}. Finally, the coproduct and unit are computed using the oplax monoidal structure on $\rI(A)$ from Lemma~\ref{lem:oplax}, i.e., $\Delta_A=\nu_{B,B}\rI(\Delta_B)$. This gives the claimed formulas.
\end{proof}

We obtain the following theorem generalizing \cite[Theorem 3.15]{DS} to arbitrary characteristic.

\begin{Theorem} \label{thm:classification} Let $G$ be a finite group with $\omega\in C^3(G,\Bbbk^\times)$, a subgroup $H$ of $G$ and a tuple $(N,\kappa,\varepsilon)$ as in Notation~{\rm \ref{not:cocycledata}}.
\begin{enumerate}\itemsep=0pt
 \item[$(a)$] If $|N|\cdot|G:H| \in \Bbbk^{\times}$, then the algebra $A=A(H,N,\kappa, \epsilon)$ from Definition~{\rm \ref{def:A}} is a rigid Frobenius algebra in $\cZ\big(\Vect_G^\omega\big)$ of dimension $\dim_j(A)=|N| |G:H|$.
 \item[$(b)$] Every connected \'etale algebra in $\cZ\big(\Vect_G^\omega\big)$ is of the form $A(H,N,\kappa, \epsilon)$ for some choice of data $H$, $N$, $\gamma$, $\epsilon$ and has trivial twist $\theta_A$.
 \item[$(c)$] Every rigid Frobenius algebra in $\cZ\big(\Vect_G^\omega\big)$ is isomorphic to one of the form $A(H,N,\kappa, \epsilon)$ for some choice of data $H$, $N$, $\gamma$, $\epsilon$ as above, with $|N| \cdot |G:H| \in \Bbbk^{\times}$.
\end{enumerate}
\end{Theorem}
\begin{proof}
To prove part (a), observe that $A=A(H,N,\kappa,\epsilon)$ is a commutative Frobenius algebra in $\cZ\big(\Vect_G^\omega\big)$ by Corollary~\ref{cor:Ipreserves} and Proposition~\ref{prop:Brigid}. It remains to check that $A$ is connected and special. First, $A$ is connected since
$\Hom_{\cZ(\Vect_G^\omega)}(\one,A)= (A_1)^G$, the space of $G$-invariant elements in the $\Bbbk G$-module $A_1$. This space is $1$-dimensional as $A_1=\Bbbk(G/H)$ and $G$ acts by left translation. Next, to see that $A$ is special we compute
$m_A\Delta_A(a_{g,n})=|N| a_{g,n}$ as in Proposition~\ref{prop:Brigid}. Further, $\varepsilon_A(1_A)=|G:H|\ide_{\one}$,
and by assumption, both scalars $|N|$ and $|G:H|$ are non-zero. Thus, $A$ is a rigid Frobenius algebra of the claimed quantum dimension.

To prove part (b), assume $A$ is a connected \'etale algebra in $\cZ\big(\Vect_G^\omega\big)$. Following the strategy from \cite[Corollary~3.3.5]{Dav3}, we consider the subalgebra $A_1$. We note that $A_1$ is a subobject of $A$ in $\cZ\big(\Vect_G^\omega\big)$. The twisted $\Bbbk G$-action on $A$ restricts to an (untwisted) $\Bbbk G$-module structure on $A_1$.
Thus, $A_1$ corresponds to an algebra in the symmetric monoidal category of $\Bbbk G$-modules. As the braiding of $A$ restricted to $A_1\otimes A_1$ is symmetric, $A_1$ is, in fact, a commutative algebra over $\Bbbk$.
Now, $A$ is separable by Proposition~\ref{prop:rigidFrobcharacterization} and this implies that $A_1$ is also separable. Indeed, $A_1$ is also a connected commutative algebra in $\cZ\big(\Vect_G^\omega\big)$ and as such it is separable if and only if the pairing $\varepsilon_\sharp m$ is non-degenerate, cf.\ \cite[Section 2.2]{Dav3} and \cite[Section 3.3]{LW3}. But non-degeneracy of this pairing on $A$ implies non-degeneracy of the restriction to $A_1$.
Hence, $A_1$ is a connected \'etale algebra in $\lmod{\Bbbk G}$ given that its $G$-grading is trivial. Viewing $A$ as a $\Bbbk$-algebra, it follows that~$A_1\cong \Bbbk ^n$ for some $n$ since $A$ is algebraically closed. The primitive central idempotents of $A_1$ are a $G$-set by restricting the $\Bbbk G$-action on $A_1$. Thus, using indecomposability of $A$, $A_1\cong \Bbbk(G/H)$ is of the form $A_H$ from Example~\ref{ex:AH} for some subgroup $H\leq G$. This argument appears in \cite[Theorem~2.2]{KO} in the semisimple case.

The multiplication of $A$ restricts to a right action of $A_1$ on $A$, which makes $A$ a local module over $A_1$ using commutativity of $A$ in $\cZ\big(\Vect_G^\omega\big)$. Thus, by the equivalence in Theorem~\ref{thm:Iequiv}, $A\cong \rI(B)$ for a connected \'etale algebra $B$ in $\cZ\big(\Vect_H^\omega\big)$.
Now, $\dim_\Bbbk B_1=1$ as \[\dim_\Bbbk A_1\geq (\dim_\Bbbk B_1)(\dim_\Bbbk A_H)=(\dim_\Bbbk B_1)(\dim_\Bbbk A_1).\]
Thus, $B$ is isomorphic, as an algebra in $\cZ\big(\Vect_H^\omega\big)$ to an algebra of the form $B(N,\kappa, \epsilon)$ by Proposition~\ref{prop:B-classification}.

To prove part (c), assume that $A$ is any rigid Frobenius algebra in $\cZ\big(\Vect_G^\omega\big)$, then $A$ is connected \'etale by Proposition~\ref{prop:rigidFrobcharacterization}. Thus, as in part (b), $A\cong A(H,N,\kappa, \varepsilon)$ for some choice of data as in Notation~\ref{not:cocycledata}.
We compute that
\begin{align*}
& m_A\Delta_A(a_{g,n})=\sum_{m\in N}a_{g,n}=|N| a_{g,n},\qquad
 \varepsilon_A 1_A = |G : N|.
\end{align*}
Hence,
\[\dim_j(B)=\evr_B \coev_B=\varepsilon_B\Delta_Bm_B(1_B)=|N|\cdot |G : N|\]
computes the \emph{quantum dimension} of $B$ in $\cZ\big(\Vect_H^\omega\big)$, cf.\ \cite[equation (3.7)]{LW3}, independently of choice of a pivotal structure for $\cZ\big(\Vect_H^\omega\big)$. Now, $A$ is rigid Frobenius if $|N|\cdot |G : N|\neq 0$.
\end{proof}

\begin{Remark}\label{rem:ribbon}
Note that the classification results of this section do not depend on the choice of a particular ribbon structure on $\cZ\big(\Vect_G^\omega\big)$ and different choices of ribbon structures may be used in Theorem~\ref{thm:classification}. By default, we use the ribbon structure on $\cZ(\Vect_{G}^\omega)$ detailed in Proposition~\ref{prop:Zribbon}.
\end{Remark}

\begin{Corollary}\label{cor:modular}
Let $A:=A(H,N,\kappa,\epsilon)$ be an algebra in $\cZ\big(\Vect_G^\omega\big)$ as defined in Definition~{\rm \ref{def:A}} and assume $|N|\cdot |G:H|\in \Bbbk^\times$.
Then the category $\locmod_{\cZ(\Vect_{G}^\omega)}(A)$ is a modular category.
\end{Corollary}
\begin{proof}
By Theorem~\ref{thm:classification}, $A$ is a rigid Frobenius algebra. Hence, by \cite[Theorem 4.12]{LW3}, $\locmod_{\cZ(\Vect_{G}^\omega)}(A)$ is a modular category. Given a ribbon structure on $\cZ\big(\Vect_G^\omega\big)$, cf.\ Remark~\ref{rem:ribbon}, $\locmod_{\cZ(\Vect_{G}^\omega)}(A)$ is a ribbon category by \cite[1.17.~Theorem]{KO} or \cite[Proposition~4.18]{LW3}.
\end{proof}

Note that if $\cha \Bbbk$ does not divide $|G|$, then $\cZ\big(\Vect_G^\omega\big)$ is semisimple (see, e.g.,~\cite[Corollary~13.2.3]{Rad}). Hence, in this case $\locmod_{\cZ(\Vect_{G}^\omega)}(A)$ is a modular fusion category.

\begin{Lemma}\label{lem:AH-subalg} Assume given a datum $(H,N,\kappa,\epsilon)$ as in Notation~{\rm \ref{not:cocycledata}}.
\begin{enumerate}\itemsep=0pt
 \item[$(a)$]
The algebra $A_H$ is isomorphic to the subalgebra of $A=A(H,N,\kappa,\epsilon)$ generated by the elements $g\otimes 1$ as an algebra in $\cZ\big(\Vect_G^\omega\big)$.
\item[$(b)$]
The subalgebra generated by the elements $1\otimes e_n$ is isomorphic to $B=B(N,\kappa,\epsilon)$ as an algebra in $\cZ\big(\Vect_H^\omega\big)$.
\end{enumerate}
\end{Lemma}

\begin{Proposition}\label{prop:LocmodA}
Assume $|N|\cdot |G:H|\in \Bbbk^\times$. The induced functor
\[I\colon\ \locmod_{\cZ\big(\Vect_H^\omega\big)}(B)\to \locmod_{\cZ(\Vect_G^\omega)}(A), \qquad V\mapsto I(V)\]
defines an equivalence of ribbon categories.
\end{Proposition}
\begin{proof}
Using Proposition~\ref{prop:braidedFrob} and Corollary~\ref{cor:Ipreserves}, $I$ defines a functor
\[\Rep_{\cZ\big(\Vect_H^\omega\big)}(B)\to \Rep_{\cZ(\Vect_G^\omega)}(A).\]
The right $A=I(B)$-action is defined using the lax monoidal structure of $I$. As $I$ is a braided lax monoidal functor, it preserves local modules.

Note that, as $I$ is fully faithful, we know
\[I\colon\ \Hom_{\cZ\big(\Vect_H^\omega\big)}(V,V')\to \Hom_{\locmod_{\cZ(\Vect_G^\omega)}(A_H)}(I(V),I(V')) \]
is fully faithful.
Thus, we need to show that a morphism $I(\alpha)\colon I(V)\to I(V')$ is a morphism of $A$-modules if and only if $\alpha\colon V\to V'$ is a morphism of $B$-modules. Indeed, if $I(\alpha)$ is a morphism of $A$-modules,
\[I(\alpha)((1\otimes v)\cdot (1\otimes e_n))=1\otimes \alpha(v\cdot e_n)=1\otimes \alpha(v)\cdot e_n.\]
Thus, $\alpha$ is a morphism of $B$-modules. The converse implication is clear.

Now, by Lemma~\ref{lem:AH-subalg}, we see that local $A$-module $W$ is also a local $A_H$-module. Thus, by Theorem~\ref{thm:Iequiv}, $W\cong I(V)$ as an object in $\locmod_{\cZ(\Vect_G^\omega)}(A_H)$ for some object $V$ in $\cZ\big(\Vect_H^\omega\big)$. By construction, $V=W^1$, which is the image of the idempotent element $1\otimes 1\in A$. This idempotent is central and hence defines an $A$-submodule which is also local. In particular, $V$ is a local $B$-module in $\cZ\big(\Vect_H^\omega\big)$.
We need to show that $I(V)\cong W$ as $A$-modules.

By \ref{lem:AH-subalg}, the $A_H$-module structure of both $W$ and $I(V)$ are induced from the respective $A$-module actions, $\rho_W\colon W \otimes A \to W$, $\rho_{I(V)}\colon I(V) \otimes A \to I(V)$.
Thus we have the following commutative diagram
\begin{align*}
 \xymatrix{
 I(V) \otimes A_H \ar@{^{(}->}[r] \ar[d]^-{\pi} & I(V) \otimes A \ar[r]^-{\rho_{I(V)}} \ar[d]^-{\pi} & I(V) \ar[d]^-{\pi} \\
 W \otimes A_H \ar@{^{(}->}[r] & W \otimes A \ar[r]^-{\rho_w} & W. }
\end{align*}
The perimeter commutes as $I(V) \cong W$ as $A_H$-modules, and the left square clearly commutes. As the action is induced by the embedding of the subalgebra $A_H$ into $A$, the right square commutes. Thus $I(V) \cong W$ as $A$-modules and the functor is essentially surjective.

The induced functor $I$ is a monoidal functor, with the monoidal structure being inherited directly.
The functor $I$ is compatible with braidings as braidings of local modules are induced from the braidings of the underlying categories.
Finally, as the functor $I$ is compatible with the ribbon twist, so is the induced functor on local modules, whose ribbon category structure is induced from that of the underlying category, see the proof of Proposition~\ref{prop:braidedFrob}.
\end{proof}

The above proposition shows that, up to equivalence of braided monoidal categories, it suffices to consider the algebra objects $B(N,\kappa,\epsilon)$ in $\cZ\big(\Vect_H^\omega\big)$, i.e., it suffices to consider the case $G=H$. The next proposition addresses when such algebras are isomorphic.

\begin{Proposition} 
Fix $H$ and $\omega\in H^3(H,\Bbbk^\times)$ and let $(N,\kappa,\epsilon)$ and $(N',\kappa',\epsilon')$ be tuples satisfying the conditions of Notation~{\rm \ref{not:cocycledata}}. Then $B=B(N,\kappa,\epsilon)$ and $B'=B(N',\kappa',\epsilon')$ are isomorphic as algebras in $\cZ\big(\Vect_H^\omega\big)$ if and only if $\epsilon'\epsilon^{-1}\oplus \kappa'\kappa^{-1}$ is zero in $\widetilde{H}^2_{\Tot}(H,N,\Bbbk^\times)$.
\end{Proposition}
\begin{proof}
Assume that $\phi\colon H\to H'$ is an isomorphism of algebras in $\cZ\big(\Vect_H^\omega\big)$. Thus, $\dim A=\dim A'$ and hence $|N|=|N'|$.
Then $\phi$ is a morphism of twisted YD modules over $H$ and, in particular, an isomorphism of $G$-graded vector spaces.
This implies that $N=N'$ and $\phi(e_n)=\sigma(n) e_n$ for some scalars $\sigma(n)\in \Bbbk^\times$. Now, as $\phi$ is a morphism of algebras,
\[\phi(e_{n}e_{m})=\phi\big(\kappa(n,m)^{-1}e_{nm}\big)=\kappa(n,m)^{-1}\sigma(nm)e_{nm}=\sigma(n)\sigma(m)\kappa'(n,m)^{-1}e_{nm}.\]
Thus, $\phi$ is a morphism of algebras if and only if
\[\frac{\kappa'(n,m)}{\kappa(n,m)}=\frac{\sigma(n)\sigma(m)}{\sigma(nm)},\]
$\kappa'\kappa^{-1}=\partial^{0,1}(\sigma)$ as claimed.

Further, $\phi$ is a morphism of twisted YD modules. Thus,
\[\phi(h\cdot e_n)=\epsilon_h(n)\sigma\big(hnh^{-1}\big)e_{hnh^{-1}}=\epsilon'_h(n)\sigma(n)e_{hnh^{-1}}=h\cdot \phi(e_n).\]
Thus, $\phi$ is a morphism of twisted YD modules if and only if
\[\frac{\epsilon'_h(n)}{\epsilon_h(n)}=\frac{\sigma\big(hnh^{-1}\big)}{\sigma(n)}.\]
This condition gives that $\epsilon^{-1}\epsilon'=\deri^{0,1}(\sigma)$. Combining, we see that
 \[\big(\epsilon'\oplus \kappa'\big)\cdot (\epsilon\oplus \kappa)^{-1}=\frac{\epsilon'}{\epsilon}\oplus \frac{\kappa'}{\kappa} \]
equals $\deri^{1}_{\Tot}(\sigma)$ and hence is zero in $\widetilde{H}^2_{\Tot}(H,N,\Bbbk^\times)$ as claimed.
\end{proof}

\begin{Remark}\label{rem:simplify}
For $N\triangleleft H$, consider the short exact sequence of groups
\[1\to N\xrightarrow{\iota}H\xrightarrow{\pi} H/N\to 1.\] This induces, via pullback maps, an exact sequence of cochain complexes of abelian groups
\[0\to F^\bullet(H/N,\Bbbk^\times)\xrightarrow{\pi^*} F^\bullet(H,\Bbbk^\times)\xrightarrow{\iota^*} F^\bullet(N,\Bbbk^\times)\to 0,\]
implying that
$H^\bullet(H/N,\Bbbk^\times)=\ker H^\bullet(\iota^*)$. Thus, any $n$-cocycle on $f\colon H^n\to \Bbbk^\times$ such that $ f|_N=0$ in $H^n(N,\Bbbk^\times)$ is equal to a $n$-cocycle $\pi^* \ov{\omega}$, where $\ov{\omega}$ is a normalized $n$-cocycle on the quotient group $H/N$ up to coboundary.
Thus, if $(H,N,\omega,\kappa, \epsilon)$ is a tuple as in Notation~\ref{not:cocycledata}, then by Lemma~\ref{lem:omega-equiv} we can assume, without loss of generality, that $\omega=\pi^*\ov{\omega}$. Then $\tau$, $\gamma$ are also trivial when restricted to inputs from $N$. This follows since
\[\pi^*\ov{\omega}(h_1,h_2,h_3)=\omega(\pi(h_1),\pi(h_2),\pi(h_3))=1\]
as soon as one of the $\pi(h_i)=1\in H/N$, i.e., as soon as one of the $h_i\in N$. However, the definition of $\gamma$, $\tau$ as elements of $\wF^2(H,N,\Bbbk^\times)$ involves at least one input from $N$.
Thus, without loss of generality, isomorphism classes of rigid Frobenius algebras $B(N,\kappa,\epsilon)$ are parametrized by elements $\epsilon\oplus \kappa\in \tilde{H}^2_{\Tot}(H,N,\Bbbk^\times)$.

As a special case, by Corollary~\ref{cor:gen-equiv} we have that $A=R(\one)$ is an algebra in $\cZ(\Vect_G^{\omega})$ such that $\Rep_{\cZ(\Vect_G^{\omega})}(A)\simeq \cZ(\Vect_{H/N}^{\ov{\omega}})$. We observe that
$A=B(N,1,1)$, where $\kappa$ and $\epsilon$ are trivial (i.e., constant functions with value $1\in \Bbbk^\times$). As all conditions in Notation~\ref{not:cocycledata} are trivial for this data, $A$ is a rigid Frobenius algebra in $\cZ(\Vect_G^{\omega})$, cf.\ Lemma~\ref{lem:AH-rigidFrob}.
\end{Remark}

Davydov--Simmons prove the following result on local modules over the Frobenius algebras studied in this section.

\begin{Theorem}[{\cite[Theorem~3.16]{DS}}]\label{thm:DSlocal}
Let $A$ be a rigid Frobenius algebra as in Theorem~{\rm \ref{thm:classification}} and $\pi\colon H\to H/N$ the quotient homomorphism. Then there exists a $3$-cocycle $\ov{\omega}\in C^3(H/N, \Bbbk^\times)$ such that $\pi^*\ov{\omega}=\omega|_{H}$ and an equivalence of ribbon categories
between $\cZ(\Vect^{\ov{\omega}}_{H/N})$ and $\locmod_{\cZ(\Vect_G^\omega)}(A)$.
\end{Theorem}
\begin{proof}[Proof (sketch)]
Using Proposition~\ref{prop:LocmodA} and Remark~\ref{rem:simplify}, it suffices to show that the braided monoidal category $\locmod_{\cZ(\Vect_H^{\pi^*\ov{\omega}})}(B)$ is equivalent to $\cZ(\Vect_{H/N}^{\ov{\omega}})$ for $B=B(N,\kappa,\epsilon)$ an algebra as in Proposition~\ref{prop:B-algebra}. Article \cite{DS} produces a braided monoidal functor from the latter category to a~category of YD-compatible $H/N$-modules and comodules involving further cocycle data from $\wF^\bullet(H/N,H/N,\Bbbk^\times)$. It is then shown in \cite[Proposition A.1]{DS} that any such deformed monoidal category, when braided, is equivalent to $\cZ(\Vect_{H/N}^{\ov{\omega}})$. The proof does not rely on the assumption $\cha \Bbbk=0$.
\end{proof}

As by \cite{Sch}, $\cZ(\Rep_{\Vect_{G}^{\omega}}(A))\cong \locmod_{\cZ(\Vect_{G}^{\omega})}(A)$ one can ask if the equivalences of Theorem~\ref{thm:DSlocal} stems from an equivalence of the monoidal categories
$\Vect_{H/N}^{\ov{\omega}}$ and $\Rep_{\Vect_{G}^{\pi^*\ov{\omega}}}(A)$, see Definition~\ref{def:RepA-general}.
To the knowledge of the authors, this remains an open question in general, but see Proposition~\ref{prop:trivialdata} below for the case of trivial cocycle data $\epsilon\oplus\kappa $, and Section~\ref{sec:expl} for the case of odd dihedral groups.

\subsection{Special cases}

The following corollary expresses two extreme cases of Corollary~\ref{cor:modular}, when $N$ is as large or as small as possible. For this, we recall the \emph{Frobenius--Perron dimension} $\FPdim(\cC)$ of a finite tensor category $\cC$ \cite[Section~4.5]{EGNO} and objects within it, see Section~\ref{subsec:local}. It is well known that
\[\FPdim(\Vect_G)=\FPdim\big(\Vect_G^\omega\big)=|G| \qquad \text{and} \qquad \FPdim(\cZ\big(\Vect_G^\omega\big))=\FPdim\big(\Vect_G^\omega\big)^2=|G|^2.\]
It follows from \cite[Corollary 4.21]{LW3} that
\[\FPdim \big(\locmod_{\cZ(\Vect_G^\omega)}(A)\big)=\frac{|G|^2}{\FPdim_{\cZ(\Vect_G^\omega)}(A)^2}= \frac{|H|^2}{|N|^2},\]
using that $\cZ\big(\Vect_G^\omega\big)$ is non-degenerate \cite[Proposition~8.6.3]{EGNO}. Here, in order to compute $\FPdim_{\cZ(\Vect_G^\omega)}(A)$, we use the forgetful \emph{quasi}-tensor functor $\cZ\big(\Vect_G^\omega\big)\to \Vect$. In fact, \[\FPdim_{\cZ(\Vect_G^\omega)}(A)=\dim_\Bbbk (A)=\frac{|G||N|}{|H|},\]
using \cite[Proposition~4.5.7]{EGNO} in the first equality and the basis $a_{g_i,n}$ of $A$ in the second equality.
Moreover,
\[\FPdim \big(\Rep_{\cZ(\Vect_G^\omega)}(A)\big)=\frac{|G|^2}{\dim_\Bbbk(A)}=\frac{|G||H|}{|N|}.\]
Note that this shows that the categories
$\Rep_{\cZ(\Vect^\omega_H)}(B(N,\kappa,\epsilon))$ and $\Rep_{\cZ(\Vect^\omega_G)}(A(H,N,\kappa,\epsilon))$ are inequivalent if $G\neq H$.

\begin{Corollary} 
Let $A:=A(H,N,\kappa,\epsilon)$ be an algebra in $\cZ\big(\Vect_G^\omega\big)$ as defined in Definition~{\rm \ref{def:A}}.
\begin{enumerate}\itemsep=0pt
\item[$(a)$] Then $A$ is \emph{trivializing}, i.e., $\locmod_{\cZ(\Vect_{G}^\omega)}(A)\simeq \Vect$, if and only if $N=H$.
\item[$(b)$] If $N=\{1\}$, then $\locmod_{\cZ(\Vect_G^\omega)}(A)$ and $\cZ\big(\Vect_H^\omega\big)$ are equivalent ribbon categories.
\end{enumerate}
\end{Corollary}
\begin{proof}
With the above computations of FP dimensions, this follows as in \cite[Corollary~6.18]{LW3}, where part (b) uses the equivalence in Theorem~\ref{thm:Iequiv}.
\end{proof}

Next, we consider the special case when $\kappa$ and $\epsilon$ are both trivial.
\begin{Proposition}\label{prop:trivialdata}
 Let $N\triangleleft H\leq G$ be subgroups with $|N|\cdot |G:H|\in \Bbbk^\times$ and $\omega\in C^3(G,\Bbbk^\times)$ such that $\omega|_H=\pi^*\ov{\omega}$ for a $3$-cocycle $\ov{\omega}$ of $H/N$. Then the equivalence of tensor categories $T$ from Proposition~{\rm \ref{prop:RepAH-equiv}} induces an equivalence of tensor categories between $\Rep_{\Vect_G^\omega}(A(H,N,1,1))$ and~$\Vect_{H/N}^{\ov{\omega}}$.
\end{Proposition}
\begin{proof}
Denote $B=B(N,1,1)$ and $A=A(H,N,1,1)$. Then $T(B)\cong A$ as algebras in $\Vect_G^\omega$ via the algebra morphism that sends $\delta_{g_i}\otimes e_n$ to $a_{g_i,n}$. Moreover, both $T(B)$ and $A$ have the same $G$-grading. Thus, $T$ induces an equivalence of categories
\[T\colon\ \Rep_{\Vect_H^\omega}(B)\to \Rep_{\Vect_G^\omega}(A).\]
Explicitly, a right $B$-module $V$ in $\Vect_H^\omega$ is mapped to the right $A$-module, defined on the $G$-graded vector space $T(V)=A_H\otimes V$ with right $A$-action given by
\begin{align*} a_{T(V)}^r((\delta_{g_i}\otimes v)\otimes(\delta_{g_j}\otimes n))&=T(a^r_B)\mu_{V,B}^T((\delta_{g_i}\otimes v)\otimes(\delta_{g_j}\otimes n))\\
&=\frac{\delta_{i,j}}{\gamma(g_i)(|v|,n)}(\delta_{g_i}\otimes (v\cdot n)).
\end{align*}
We will equip this functor with a monoidal structure induced from $\mu_{V,W}^T$ in Lemma~\ref{lem:T-monoidal}. This way, $\mu_{V,W}^T$ is a morphism of right $A$-modules. Here, we regard $T(B)\cong A$ as a commutative algebra in $\cZ\big(\Vect_G^\omega\big)$ with the twisted YD module structure defined in Lemma~\ref{lem:AFrobenius}.
Explicitly, we compute the half-braiding $c^A$ of $A$ with $T(W)$, cf.\ Proposition~\ref{prop:twistedYD-ZVect}, as
\begin{align*}
 c^A_{T(W)}((\delta_{g_i}\otimes n)\otimes (\delta_{g_j}\otimes w)={}& (\delta_{g_j}\otimes w)\otimes \big(g_j|w|^{-1}g_j^{-1}\cdot (\delta_{g_i}\otimes n)\big)\\
 ={}&\tau\big(g_j|w|^{-1}g_j^{-1},g_i\big)(n)\tau(g_k,h)(n)^{-1}\\
 &\cdot \big((\delta_{g_j}\otimes w)\otimes \big(\delta_{g_k}\otimes hnh^{-1}\big)\big),
\end{align*}
where $g_{k}$ satisfies $g_j|w|^{-1}g_j^{-1}g_i=g_kh\in g_kH$, i.e., $h=g_k^{-1}g_j|w|^{-1}g_j^{-1}g_i\in H$. Thus, we find that the \emph{left} $T(B)$-action is given by
\begin{align*}
a^l_{T(W)}((\delta_{g_i}\otimes n)\otimes (\delta_{g_j}\otimes w))={}&a^r_{T(W)}c^{A}_{T(W)}((\delta_{g_i}\otimes n)\otimes (\delta_{g_j}\otimes w))\\
={}&T(a^r_B)\mu^T_{B,W} c^A_{T(W)}((\delta_{g_i}\otimes n)\otimes (\delta_{g_j}\otimes w))\\
={}&\frac{\tau\big(g_j|w|^{-1}g_j^{-1},g_i\big)(n)}{\tau(g_k,h)(n)}T(a_B^r)\mu^T_{N,W}\\
&\cdot \big((\delta_{g_j}\otimes w)\otimes \big(\delta_{g_k}\otimes hnh^{-1}\big)\big)\\
 ={}&\delta_{j,k}\frac{\tau\big(g_j|w|^{-1}g_j^{-1},g_i\big)(n)}{\tau(g_k,h)(n)^{-1}\gamma(g_j)\big(|w|,hnh^{-1}\big)}\\
 &\cdot \big(\delta_{g_j}\otimes \big(w\cdot hnh^{-1}\big)\big)\\
 ={}&\delta_{i,j}\frac{\tau\big(g_i|w|^{-1}g_i^{-1},g_i\big)(n)}{\tau\big(g_i,|w|^{-1}\big)(n)\gamma(g_i)\big(|w|,|w|^{-1}n|w|\big)}\\
 &\cdot \big(\delta_{g_i}\otimes \big(w\cdot|w|^{-1}n|w|\big)\big)\\
 ={}&\frac{\delta_{i,j}}{\gamma(g_i)(n,|w|)}\big(\delta_{g_i}\otimes \big(w\cdot|w|^{-1}n|w|\big)\big).
\end{align*}
Here, we used that if $k=j$ then $h=|w|^{-1}g_j^{-1}g_i\in H$ which implies that $g_i=g_j$ in the second equality. The last equality follows from equation~\eqref{eq:gamma-braid2}.

Now, we check that $\mu_{V,W}^T$ descents to a morphism
\[T(V)\otimes_A T(W)\to T(V\otimes_B W).\]
This follows by comparing
\begin{align*}
 &\mu^{T}_{V,W}\alpha_{T(V),T(B),T(N)}(\ide_{T(V)}\otimes a^l_{T(W)})(((\delta_{g_i}\otimes v)\otimes (\delta_{g_j}\otimes n))\otimes (\delta_{g_k}\otimes w))\\
 &\quad=\frac{\delta_{j,k}\omega\big(g_i|v|g_i^{-1},g_jng_j^{-1},g_k|w|g_k^{-1}\big)^{-1}}{\gamma(g_j)(n,|w|)}\mu^{T}_{V,W}\big((\delta_{g_i}\otimes v)\otimes_B \big(\delta_{g_j}\otimes \big(w\cdot|w|^{-1}n|w|\big)\big)\big)\\
 &\quad=\frac{\delta_{i,j}\delta_{j,k}\omega\big(g_i|v|g_i^{-1},g_ing_i^{-1},g_i|w|g_i^{-1}\big)^{-1}}{\gamma(g_i)(n,|w|)\gamma(g_i)(|v|,n|w|)}(\delta_{g_i}\otimes \big(v\otimes_B \big(w\cdot|w|^{-1}n|w|\big)\big)\\
 &\quad\stackrel{\text{Lemma~\ref{eq-gammacond}}}{=} \frac{\delta_{i,j}\delta_{j,k}\omega(|v|,n,|w|)^{-1}}{\gamma(g_i)(|v|,n)\gamma(g_i)(|v|n,|w|)}(\delta_{g_i}\otimes \big(v\otimes_B 0 \big(w\cdot |w|^{-1}n|w|\big)\big)\\
 &\quad=\frac{\delta_{i,j}\delta_{j,k}}{\gamma(g_i)(|v|,n)\gamma(g_i)(|v|n,|w|)}(\delta_{g_i}\otimes (v\cdot n\otimes_B w))\\
 &\quad=\mu_{V,W}^T(a_{T(V)}^r\otimes \ide_{T(W)})(((\delta_{g_i}\otimes v)\otimes (\delta_{g_j}\otimes n))\otimes (\delta_{g_k}\otimes w)),
\end{align*}
where the second-last equality uses the compatibility condition of the relative tensor product~${V\otimes_B W}$. Now, the induced morphism $\mu_{V,W}^T\colon T(V)\otimes_A T(W)\to T(V\otimes_B W)$ is directly checked to be an isomorphism. Coherence of $\mu^T$ with associators follows as in Proposition~\ref{prop:RepAH-equiv}. Thus, we have shown that $T$ gives an equivalence of tensor categories between $\Rep_{\Vect_H^\omega}(B)$ and~$\Rep_{\Vect_H^\omega}(A)$.

Finally, $B=R(\one)$ for the right adjoint functor $R$ used in Corollary~\ref{cor:gen-equiv}. Thus, $\Vect_{H/N}^{\ov{\omega}}$ is equivalent to $\Rep_{\Vect_H^\omega}(B)$ by \cite[Proposition 6.1]{BN11}. Composing these two tensor equivalences proves the claim.
\end{proof}

\subsection{Examples for odd dihedral groups}\label{sec:expl}
In this section, we provide a full list of isomorphism classes of rigid Frobenius algebras (or, connected \'etale algebras) in $\cZ\big(\Vect_G^\omega\big)$ in the case when $G$ is an odd dihedral group and $\omega$ any $3$-cocycle valued in $\Bbbk=U(1)\subseteq \mC$. Moreover, we determine the tensor categories of representations of these algebras.

Let $G = D_{2m+1}$, the dihedral group of odd degree $2m +1$ with presentation
\[\big\langle s,r \mid s^2 = r^{2m+1} = e ,\, sr = r^{-1}s \big\rangle.\]
\begin{Notation}
 Consider $g \in G=D_{2m+1}$. We will write $g$ in terms of the group generators, namely $g=s^{g_0}r^{g_1}$, where $g_0\in \lbrace 0,1 \rbrace$ and $g_1 \in \lbrace -m, -m+1, \dots , m-1, m \rbrace$.
\end{Notation}

We are going to use classify the rigid Frobenius algebras in $\cZ\big(\Vect_G^\omega\big)$.
By Theorem~\ref{thm:classification}, these rigid Frobenius algebras are of the form $A(H,N,\kappa,\epsilon)$ as defined in Definition~\ref{def:A}.

First, we determine the $3$-cocycle $\omega\colon G\times G \times G \to U(1)$.
By \cite[equation~3.2.8]{dWP}, there are $4m+2$ independent $3$-cocycles classes in $H^3(G,U(1))$, parametrized by $p \in \lbrace 0,1,\dots , 4m+1 \rbrace$. The explicit formula for the $3$-cocycle $\omega_p$ is, for $a,b,c \in G$, given by{\samepage
\begin{gather}
 \omega_p(a,b,c) :=\exp\Big(\tfrac{2\pi {\rm i} p}{(2m+1)^2}\big((-1)^{b_0+c_0}a_1((-1)^{c_0}b_1+c_1 - [(-1)^{c_0}b_1+c_1]) \nonumber\\
 \hphantom{\omega_p(a,b,c) :=\exp\Big(}{} + \tfrac{(2m+1)^2}{2}a_0b_0c_0\big) \Big).\label{eq:omega_p}
\end{gather}}

\noindent
Here, the rectangular bracket reduces the quantity modulo $2m+1$ in the range $\lbrace -m,\dots ,m \rbrace$.
We thus observe that $(-1)^{c_0}b_1+c_1 - [(-1)^{c_0}b_1+c_1] = l(2m+1)$ for $l\in \lbrace -1,0,1 \rbrace$. This allows us to simplify the above formula to
\begin{equation*}
 \omega_p(a,b,c) :=\exp\Big(\tfrac{2\pi {\rm i} p}{(2m+1)}\big((-1)^{b_0+c_0}a_1(l + \tfrac{(2m+1)}{2}a_0b_0c_0\big) \Big).
\end{equation*}
By Remark~\ref{rem:Ntriv}, we need to find values for $p$ such that $\omega_p$ is trivial when restricted to a normal subgroup $N\triangleleft H \subseteq G$. We now discuss the possible choices of $H$, $N$.

The subgroups of the odd degree dihedral group $D_{2m+1}$ are split into two types; either a~dihedral subgroup of odd degree $D_{(2m+1)/d}$, or a cyclic group of the form $\mZ_{(2m+1)/d} \cong \langle r^{d}\rangle$. Here, $d$ is a divisor of $2m+1$.
The normal subgroups of $D_{2m+1}$ are exactly the group itself, or the subgroups of cyclic form. Thus, we get three cases:
\begin{itemize}\itemsep=0pt
 \item $H= \mZ_{(2m+1)/d}$, $N= \mZ_{(2m+1)/(df)}$,
 \item $H=D_{(2m+1)/d}$, $N = \mZ_{(2m+1)/(df)}$,
 \item $H=N=D_{(2m+1)/d}$,
\end{itemize}
where $f$ is a divisor of $(2m+1)/d$. For ease of notation, we shall set
\[x:=(2m+1)/d \qquad \text{and}\qquad y:=(2m+1)/(df).\]
We shall now determine for which values of $p$, the cocycle $\omega_p$ will become trivial when restricted to $N$ in each case.

\begin{Lemma}
 In the cases such that $N= \mZ_{(2m+1)/(df)}$, $\omega_p|_N$ is trivial when
 \[p\equiv 0 \mod (2m+1)/(df).\]
\end{Lemma}
\begin{proof}
When we restrict to $N$, we can have that $g_0=0$, $g_1 = df g_2$ for all $g\in N$, where $g_2\in \lbrace -(y-1)/2, -(y+1)/2,\dots ,(y-3)/2 ,(y-1)/2 \rbrace$. Thus $\omega_p$ becomes \[\omega_p|_{\mZ_y}(a,b,c) = \exp\Big(2 \pi {\rm i}\tfrac{p l d f a_2}{(2m+1)} \Big).
\]
We require this restriction to be trivial for all values of $a_2$, $l$. This occurs only when $p\equiv 0 \mod y$. There are $2df$-choices of $p$ in the applicable range.
\end{proof}

\begin{Lemma}
In the case such that $N= D_{(2m+1)/d}$, $\omega_p|_N$ is trivial when $p\equiv 0 \mod (2m+1)/d$.
\end{Lemma}

\begin{proof} When considering $g\in N$, we observe that $g_1 = dg_2$, where $g_2\in \lbrace -(x-1)/2, -(x+1)/2,\dots ,(x-3)/2 ,(x-1)/2 \rbrace \rbrace$.
 Thus we get that
 \begin{equation*}
 \omega_p|_{D_x}(a,b,c) =\exp\Big(\tfrac{2\pi {\rm i} p d}{(2m+1)}\big((-1)^{b_0+c_0}a_2(l + \tfrac{(2m+1)}{2 d}a_0b_0c_0\big) \Big).
 \end{equation*}
 It can now be seen that this $3$-cocycle is trivial everywhere on $H$ only when $p\equiv 0 \mod x$. Thus there are $2d$-choices for $p$.
\end{proof}

From these lemmas, we are now in a position to classify all rigid Frobenius algebras in $\cZ(\Vect^{\omega_p}_{D_{2m+1}})$, where $p \equiv 0 \mod y$ for some divisors $d|(2m+1)$ and $f|x$, by finding all possible data for $\kappa$, $\epsilon$.
\begin{Lemma}
 In all cases, $\kappa$ is a trivial $2$-cocycle in $H^2(N,U(1))$.
\end{Lemma}
\begin{proof}
 By equation~\eqref{eq:tau}, $\kappa$ is a $2$-cocycle on $N$. In the case that $N=\mZ_{y}$, \cite[equation~2.3.14]{dWP} gives us that $H^2(\mZ_y,U(1)) \cong \{0\}$, so $\kappa$ is trivial up to coboundary.

In the case that $N=D_x$, we shall use the dual universal coefficient theorem \cite[Theorem~3.6.5]{Wei} to calculate the relevant cohomology group,
 \[H^2(D_x,U(1)) \cong \Hom(H_2(D_x,\mZ),U(1)) \oplus \Ext^1_{\mZ}(H_1(D_x,\mZ),U(1)).\]
By \cite[Example~6.8.5]{Wei}, the involved homology groups are $H_2(D_x,\mZ) \cong \lbrace 0 \rbrace$, $H_1(D_x,\mZ) \cong \mZ_2$. We get that \[H^2(D_x,U(1)) \cong \Ext^1_{\mZ}(\mZ_2,U(1)) \cong \lbrace 0\rbrace,\]
where the last isomorphism follows from \cite[Corollary 3.3.11]{Wei}. Thus $\kappa$ is again trivial up to coboundary.
\end{proof}

To begin determining $\epsilon$, we note that when $N=\mZ_y$ we can use equation~\eqref{eq:omega_p} to calculate that, for $h,g \in H$ and $a,b \in N$,
\begin{align*}
 &\omega_p(a,h,g) = 1 = \omega_p(h,a,b) ,\qquad
 \omega_p(h,g,a) = \omega_p\big(h,gag^{-1},g\big) ,
\end{align*}
and thus $\tau(h,g)(n) = 1 = \gamma(h)(n,m)$. We also get this result when $H=N=D_x$, as $\omega_p$ is trivial everywhere by construction.

Thus, the conditions $\epsilon$ must satisfy from Notation \ref{not:cocycledata} is now
\begin{gather}
 \epsilon_h\big(gng^{-1}\big)\epsilon_g(n)= \epsilon_{hg}(n)\label{excond1}\\
 \epsilon_h(n)\epsilon_h(m) = \epsilon_h(nm) \label{excond2}\\
 \epsilon_n(m) = 1 \label{excond3},
\end{gather}
as well as $\epsilon_h(1)=1$.

Equation~\eqref{excond2} states that, for any $h\in H$, $\epsilon_h$ is a $1$-cocycle valued in $C^1(N,U(1))$, where~$N$ acts trivially on $U(1)$. There are no non-trivial $1$-coboundaries in this construction and so~$C^1(N,U(1))=H^1(N,U(1))$.

We shall now determine the value of $\epsilon$ in all three cases.

\begin{Lemma}
 When $H=N = D_{(2m+1)/d}$, $\epsilon$ is the trivial function $\epsilon\colon H\times N \to U(1)$.
\end{Lemma}
\begin{proof}
 Follows immediately from equation~\eqref{excond3} as $H=N$.
\end{proof}

\begin{Lemma}
 When $H=\mZ_{(2m+1)/d},N=\mZ_{(2m+1)/(df)}$, $\epsilon$ is a $1$-cocycle in $H^1(H,N)\cong N$.
\end{Lemma}
\begin{proof}
 As $H$ is abelian, equation~\eqref{excond1} becomes
 \begin{equation}\label{excond4}
 \epsilon_h(n)\epsilon_g(n) = \epsilon_{hg}(n)
 \end{equation}
 and thus $\epsilon$ is a $1$-cocycle in $H^1\big(H,H^1(N,U(1))\big)$, where $H$ acts trivially.

 By \cite[equation~2.3.13]{dWP}, $H^1(\mZ_{y},U(1)) \cong \mZ_{y}$, and so $\epsilon \in H^1(H,N)$.
 We then use \cite[Theorem~3.6.5, Corollary 3.3.11, Example 6.2.3]{Wei} to calculate that
 \[H^1(\mZ_x,\mZ_y) \cong \Hom(\mZ_x,\mZ_y)\cong \mZ_y,\]
 where the last isomorphism follows as $y$ divides $x$.
\end{proof}

\begin{Lemma}
 When $H=D_{(2m+1)/d},N=\mZ_{(2m+1)/(df)}$, $\epsilon$ is a trivial $1$-cocycle in $H^1(H,N)$.
\end{Lemma}
\begin{proof}
We first note that we can construct a $\mZ_2$-grading on $H$ by forming the quotient group $H/\mZ_{2m+1}\cong \mZ_2$. When $h\in H$ is in the $0$-graded component (i.e, in $\mZ_{2m+1}$), it is clear that equation~\eqref{excond1} becomes equation~\eqref{excond4}.

When $h \in H$ is in the $1$-graded component, equation~\eqref{excond1} becomes \[\epsilon_h(n^{-1})\epsilon_g(n) = \epsilon_{hg}(n).\]
By setting $g=1$, we observe that $\epsilon_h(n)=\epsilon_h(n^{-1})$. Thus, equation~\eqref{excond1} becomes equation~\eqref{excond4} once more. Thus, $\epsilon \in H^1\big(H,H^1(N,U(1))\big)$, where $H$ acts trivially, and as in the previous lemma, $H^1(\mZ_y,U(1)) \cong \mZ_y$. We then use \cite[Theorem~3.6.5, Corollary~3.3.11, Example~6.8.5]{Wei} to calculate that
\[H^1(D_x,\mZ_y)\cong \Hom(\mZ_2,\mZ_y) \cong \lbrace 0 \rbrace,\]
with the last isomorphism following as $y$ is odd and so $N=\mZ_y$ contains no non-identity elements of order $2$. Thus $\epsilon$ is trivial in $H^1(H,\mZ_y)$.
\end{proof}

We have thus found all rigid Frobenius algebras in $\cZ\big(\Vect_{D_{2m+1}}^{\omega_p}\big)$, up to isomorphism of algebras, proving the following proposition.

\begin{Proposition}
 Let $G=D_{2m+1}$, the dihedral group of odd degree $2m+1$, and let $d$, $f$ be a~pair of not necessarily proper divisors of $2m+1$ and $(2m+1)/d$, respectively.
 \begin{enumerate}\itemsep=0pt
 \item[$(a)$]
 Then, whenever $p\equiv 0 \mod (2m+1)/(df)$, there exist rigid Frobenius algebras $\cZ\big(\Vect_{D_{2m+1}}^{\omega_p}\big)$ of the form
 \[A\big(D_{(2m+1)/d},\mZ_{(2m+1)/(df)},1,1\big)\qquad \text{and}\qquad
A\big(\mZ_{(2m+1)/d},\mZ_{(2m+1)/(df)},\epsilon,1\big),\]
 where $\epsilon$ is a $1$-cocycle in $ H^1\big(\mZ_{(2m+1)/d}, \mZ_{(2m+1)/(df)}\big)\cong \mZ_{(2m+1)/(df)}$.
 \item[$(b)$] Additionally, there is a trivializing rigid Frobenius algebra of the form
 \[A\big(D_{(2m+1)/d},D_{(2m+1)/d},1,1\big)\]
 in $\cZ\big(\Vect_{D_{(2m+1)}}^{\omega_p}\big)$ whenever $p\equiv 0 \mod (2m+1)/d$.
 \end{enumerate}
This completely classifies all rigid Frobenius algebras in categories of the form $\cZ\big(\Vect_G^{\omega_p}\big)$, up to an isomorphism of algebras in $\cZ\big(\Vect_G^{\omega_p}\big)$.
\end{Proposition}

These algebras have the structure of
 a $\mC$-vector space with $\mC$-basis ${\lbrace a_{g,n} \mid g \in G, \allowbreak n \in N\rbrace}$ subject to the relations
 \[a_{gh,n} = \epsilon_h(n) a_{g,hnh^{-1}} \qquad \forall h\in H\]
 and with
the following YD module and algebra structures in their respective categories $\cZ\big(\Vect^{\omega_p}_H\big)$:
 \begin{enumerate}\itemsep=0pt
 \item[$(i)$] $G$-action: $k \cdot a_{g,n}=a_{kg,n}$ for $k\in G$;
 \item[$(ii)$] $G$-coaction: $\delta(a_{g,n})=gng^{-1} \otimes a_{g,n}$;
 \item[$(iii)$] multiplication: $a_{g,n} a_{g,m} = a_{g,nm}$ for $g \in G$ and $ n,m\in N$, and $a_{g,n}a_{k,n} =0$ if $kH\neq gH$;
 \item[$(iv)$] unit: $1_A =\sum_{i\in I}a_{g_i,1}$;
 \item[$(v)$] coproduct: $\Delta_A (a_{g,n}) = \sum_{m\in N}a_{g,m}\otimes a_{g,m^{-1}n}$ for all $g\in G$ and $n\in N$;
 \item[$(vi)$] counit:
 $\varepsilon_A(a_{g,n}) = \delta_{n,1}.$
 \end{enumerate}
 We note that all of these algebras are images of group algebras $\Bbbk N$ under the functor $I$, with the subgroup $H$ governing the resulting algebra multiplication in $\cZ\big(\Vect_G^{\omega_p}\big)$.

Furthermore, for all of the cases where $\epsilon$ is trivial, we can utilise Proposition~\ref{prop:trivialdata} to determine their categories of representations when viewed as objects in $\Vect_G^{\omega_p}$, up to tensor equivalence. Explicitly,
\begin{itemize}\itemsep=0pt
 \item for $A=A(D_{(2m+1)/d},\mZ_{(2m+1)/(df)},1,1)$, $\Rep_{\Vect_G^{\omega_p}}(A) \cong \Vect^{\ov{\omega}}_{D_f}$;
 \item for $A=A(\mZ_{(2m+1)/d},\mZ_{(2m+1)/(df)},\epsilon,1)$, $\Rep_{\Vect_G^{\omega_p}}(A) \cong \Vect^{\ov{\omega}}_{\mZ_f}$;
 \item for $A=A(D_{(2m+1)/d},D_{(2m+1)/d},1,1)$, $\Rep_{\Vect_G^{\omega_p}}(A) \cong \Vect^{\ov{\omega}}_{\lbrace 0\rbrace} = \Vect_\mC$.
\end{itemize}
 Even if $\epsilon$ is non-trivial in the second case, since $\kappa$ is trivial, the categories
 $\Rep_{\Vect_G^{\omega_p}}(A)$ do not depend on $\epsilon$, only their associativity isomorphisms does, which, in any case, corresponds to a~$3$-cocycle on $\mZ_f$. By Remark~\ref{rem:simplify}, we obtain all possible $3$-cocycles of $D_f$ and $\mZ_f$ as $\ov{\omega}$, up to coboundary.

 Note that by \cite{Sch}, the corresponding categories of local modules in $\cZ\big(\Vect_G^\omega\big)$ are equivalent to the Drinfeld centers $\cZ\big(\Vect_{D_f}^{\ov{\omega}}\big)$, respectively, $\cZ\big(\Vect_{\mZ_f}^{\ov{\omega}}\big)$ in the first two cases (see Theorem~\ref{thm:DSlocal}), and we recover the fact that the case $H=N=D_{(2m+1)/d}$ gives a trivializing algebra in the third case.

\appendix
\section{Group cohomology}\label{appendix}

\subsection{Definitions}

Here, we collect basic definitions from group cohomology used in the text, see, e.g., \cite[Section~3.4]{Ben}, \cite[Chapter~III]{Bro}.\footnote{These references typically use left module conventions.} Let $G$ be a group, the \emph{bar resolution} is the complex
\[\cdots \mZ G^n \otimes_{\mZ} \mZ G\xrightarrow{\partial_n} \mZ G^{n-1}\otimes_{\mZ}\mZ G \xrightarrow{\partial_{n-1}} \cdots \mZ G^1\otimes_{\mZ}\mZ G \xrightarrow{\partial_1}\mZ G,\]
where $ \mZ G^n\otimes_{\mZ} \mZ G$ is a right $\mZ G$-module via right multiplication. As an $\mZ G$-module, $\mZ G^n\otimes_{\mZ} \mZ G $ is freely generated by $n$-tuples $(g_1,\dots, g_n)$. The differential is the $\mZ G$-module homomorphism determined by
\[\partial^n(g_1,\dots, g_n)=(-1)^n(g_2,\dots, g_n)+ \sum_{i=1}^{n-1}(-1)^{n-i} (g_1,\dots, g_ig_{i+1},\dots, g_n)+ (g_1, \dots, g_{n-1})g_n.\]
Given a right $\mZ G$-module $A$, we obtain the cochain complex $F^\bullet(G,A)$ on abelian groups of functions $F^n(G,A)=\Fun(G^n,A)$ with differentials
\[ M \xrightarrow{\deri^0} F^1( G,A) \xrightarrow{\deri^1} F^2(G,A) \cdots F^{n-1}(G,A)\xrightarrow{\deri^{n-1}}
F^n(G,A)\cdots, \]
where $\deri^n$ is obtained by composing with $\partial_{n+1}$ under the identification \[\Hom_{\mZ G}(\mZ G^n\otimes_{\mZ} \mZ G,A)\cong \Fun(G^n,A)=F^n(G,A),\]
where the latter is simply the $\mZ$-module of maps $G^n\to A$. Explicitly, the differential $\deri=\deri^n$ is given on a map $\omega\colon G^n\to A$ by
\begin{equation*}
 \begin{split}\deri \omega(g_0,\dots, g_{n})={}& (-1)^{n+1}\omega(g_1,\dots, g_{n})\\
 &{}+\sum_{i=0}^{n-1}(-1)^{n-i}\omega(g_0,\dots, g_ig_{i+1}, \dots, g_{n})+ \omega(g_0,\dots, g_{n-1})\cdot g_{n},
 \end{split}
\end{equation*}
where $\cdot$ denotes the action of $G$ on $A$. In practice, we often use the $G$-module $A=\Bbbk^\times$ (or $U(1)$), with trivial $G$-action. In this case, we use multiplicative notation.
We denote $C^n(G,A):=\ker(\deri^n)$ for the space of \emph{$n$-cocycles} and the \emph{$n$-th cohomology group} is
\[H^n(G,A):=C^n(G,A)/\Img \deri^{n-1}.\]
For example, a $3$-cocycle with values in $\Bbbk^\times$ satisfies equation~\eqref{3cocycle}.

\subsection{3-cocycle identities}

Let $\omega\colon G^3\to \Bbbk^\times$ be a $3$-cocycle in group cohomology (computed using the bar resolution).
The $3$-cocycle condition on $\omega$ is
\begin{align}
 \omega ( g_1 g_2,g_3,g_4 ) \omega( g_1,g_2,g_3 g_4 )=\omega ( g_1,g_2,g_3 ) \omega( g_1,g_2 g_3,g_4 ) \omega ( g_2,g_3,g_4 ).
 \label{3cocycle}
\end{align}
We assume that $\omega$ is \emph{normalized}, i.e., $\omega(g,h,k)=1$ as soon as one of the entries is the identity of~$G$.
In what follows we provide proofs for several identities we have used along the way involving cocycles, $\tau$ (as defined in equation \eqref{eq-taudef}) and $\gamma$ (as defined in equation \eqref{eq-gammadef}).

\begin{Lemma}\label{eq-taucond}The map
$\tau (h,k)(d)$ satisfies
\begin{equation*}
 \tau(h,k)(d) \tau(g,hk)(d)=\tau(gh,k)(d) \tau(g,h)\big(kdk^{-1}\big), \qquad \forall g,h,k,d\in G.
\end{equation*}
\end{Lemma}

\begin{proof}
This equation follows from repeatedly applying the $3$-cocycle in equation \eqref{3cocycle} with the following entries:
\begin{itemize}\itemsep=0pt
 \item [--] $g_1=g$, $g_2=h$, $g_3=k$, $g_4=d$,
 \item [--] $g_1=g$, $g_2=h$, $g_3=kdk^{-1}$, $g_4=k$,
 \item [--] $g_1=g$, $g_2=hkdk^{-1}h^{-1}$, $g_3=h$, $g_4=k$,
 \item [--] $g_1=ghkdk^{-1}h^{-1}g^{-1}$, $g_2=g$, $g_3=h$, $g_4=k$.\hfill $\qed$
\end{itemize}\renewcommand{\qed}{}
\end{proof}

\begin{Lemma}\label{eq-gammacond} The map $\gamma (h)(g,g')$ is related to the $3$-cocycle $\omega(g,g',g'')$ via the following identity:
\begin{equation*}
\frac{\gamma(h)(gg',g'') \gamma(h)(g,g')}{\omega\big(hgh^{-1},hg'h^{-1},hg''h^{-1}\big) }=\frac{\gamma(h)(g,g'g'') \gamma(h)(g',g'')}{ \omega(g,g',g'')}.
\end{equation*}
\end{Lemma}

\begin{proof}
This equation follows from applying the $3$-cocycle in equation \eqref{3cocycle} several times with the following entries:
\begin{itemize}\itemsep=0pt
 \item[--] $g_1=h$, $g_2=g$, $g_3=g'$, $g_4=g''$,
 \item[--] $g_1=hgh^{-1}$, $g_2=hg'h^{-1}$, $g_3=hg''h^{-1}$, $g_4=h$,
 \item[--] $g_1=hgh^{-1}$, $g_2=h$, $g_3=g'$, $g_4=g''$, and
 \item[--] $g_1=hgh^{-1}$, $g_2=hg'h^{-1}$, $g_3=h$, $g_4=g''$.\hfill $\qed$
\end{itemize}\renewcommand{\qed}{}
\end{proof}

\begin{Lemma}\label{gammatau} The maps $\tau (h,k)(d)$ and $\gamma(k)(d,g)$ are related via the following identity:
 \begin{align*}
 \gamma ( k ) ( d,g ) \gamma ( h ) \big( kdk^{-1},kgk^{-1} \big) \tau ( h,k ) ( d ) \tau ( h,k ) ( g ) =\tau ( h,k ) ( dg ) \gamma ( hk ) ( d,g ).
 \end{align*}
\end{Lemma}
 \begin{proof}
 Proving this equality amounts to apply the $3$-cocycle in equation \eqref{3cocycle} with the following set of entries:
 \begin{itemize}\itemsep=0pt
 \item[--] $g_1=h$, $g_2=k$, $g_3=d$, $g_4=g$,
 \item[--] $g_1=h$, $g_2=kdk^{-1}$, $g_3=kgk^{-1}$, $g_4=k$,
 \item[--] $g_1=hkd(hk)^{-1}$, $g_2=hkg(hk)^{-1}$, $g_3=h$, $g_4=k$,
 \item[--] $g_1=h$, $g_2=kdk^{-1}$, $g_3=k$, $g_4=g$,
 \item[--] $g_1=hkd(kh)^{-1}$, $g_2=h$, $g_3=k$, $g_4=g$, and
 \item[--] $g_1=hkd(hk)^{-1}$, $g_2=h$, $g_3=kgk^{-1}$, $g_4=k$.\hfill $\qed$
\end{itemize}\renewcommand{\qed}{}
 \end{proof}

\subsection{Cohomology of crossed products of groups}\label{appendix:coh-crossed}

Let $H$, $G$ be groups together with a left action of $H$ on $G$ by group automorphisms, i.e., $H\mapsto \Aut(G)$, $h\mapsto \big(g\mapsto {}^hg\big)$. Then we can form the \emph{crossed product}
$G\rtimes H$, which is $G\times H$ as a~set with multiplication given by
\[(g_1,h_1)\cdot (g_2,h_2)=\big(g_1{}^{h_1}g_2,h_1h_2\big).\]
Let $A$ be a right $\mZ G$-module. Then $F^n(G,A)=\Fun(G^n,A)$ becomes a right $H$-module with action
\[(f\cdot h)(g_1,\dots, g_n)=f\big({}^hg_1,\dots, {}^hg_n\big).\]
Following \cite[Appendix~A]{DS}, define a double complex
\[F^{n,m}(H,G,A)=\Fun(H^{n},\Fun(G^m,A))=F^n(H,F^m(G,A)).\]
The two differentials are denoted by
\[\deri^{n,m}\colon\ F^{n,m}(H,G,A)\to F^{n+1,m}(H,G,A), \qquad \partial^{n,m} \colon\ F^{n,m}(H,G,A)\to F^{n,m+1}(H,G,A),\]
where
\[(\partial^{n,m}(f))(h_1,\dots, h_n)=\deri^n(f(h_1,\dots, h_n)).\]
The differentials commute, i.e., $\deri^{n,m+1}\partial^{n,m}=\partial^{n+1,m}\deri^{n,m}$ making $F^{\bullet,\bullet}(H,G,A)$ a double complex. Hence, one can consider the associated \emph{truncated} double complex
\begin{gather*}
\wF_{\Tot}^{n}(H,G,A)=\bigoplus_{i=0}^{n-1}F^{n-i,i}(H,G,A),\\
\deri_{\Tot}^n(f):=\deri^{n-i,i}(f)+(-1)^i\partial^{n-i,i}(f) \qquad \text{for $f\in F^{n-i,i}(H,G,A)$ with $i<n$}.
\end{gather*}
We will typically denote an element $f\in \wF^{i,n-i}(H,G,A)\subseteq F_{\Tot}^{n}(H,G,A)$ by a function
$f\colon H^i\times G^{n-i} \to A$.

Letting $G\rtimes H$ act on $A$ via the surjective homomorphism $G\rtimes H\to G$, the \emph{untrucated} total complex
\[F_{\Tot}^n(H,G,A)=\bigoplus_{i=0}^{n} F^{n-i,i}(H,G,A)\] is quasi-isomorphic to the complex $F^\bullet(G\rtimes H,A)$ computing group cohomology, see \cite{HS}.

Several cocycles considered in this paper have interpretations as elements of the truncated total complex $\wF_{\Tot}^{\bullet}$ with $A=\Bbbk^\times$.

\begin{Example}
We now let a subgroup $G$ act on itself via conjugation, while $G$ acts on $\Bbbk^\times$ trivially, using multiplicative notation. Consider a triple
\[T(\omega)=\tau\oplus \gamma \oplus \omega\in F^{2,1} \oplus F^{1,2}\oplus F^{0,3}=\wF^3_{\Tot}(G,G,\Bbbk^\times),\]
with $\gamma(h_1,g_1,g_2)=\gamma(h_1)(g_1,g_2)$ and $\tau(h_1,h_2,g_1)=\tau(h_1,h_2)(g_1)$ defined in equations~\eqref{eq-taudef} and~\eqref{eq-gammadef}.
Then
$T(\omega)$ is a $3$-cocycle in the totalized complex $C^3_{\Tot}(G,G,\Bbbk^\times)$ if and only if the following conditions hold
\begin{alignat*}{4}
&\deri^{2,1}(\tau)=1 && \Longleftrightarrow && \text{Lemma~\ref{eq-taucond}},&\\
&\partial^{2,1}(\tau)\deri^{1,2}(\gamma)=1 && \Longleftrightarrow && \text{Lemma~\ref{gammatau}},&\\
&\partial^{1,2}(\gamma)^{-1}\deri^{0,3}(\omega)=1\quad && \Longleftrightarrow\quad && \text{Lemma~\ref{eq-gammacond}},&\\
&\partial^{0,3}(\omega)=1 && \Longleftrightarrow && \text{equation~\eqref{3cocycle}}.&
\end{alignat*}
\end{Example}

\begin{Example}
Let $N\triangleleft H$ be a normal subgroup and let $H$ act on $N$ by conjugation. We consider $2$-boundaries in the complex $\wF_{\Tot}^{\bullet}(H,N,\Bbbk^\times)$. These can be parametrized by pairs
\[\epsilon \oplus \kappa\in F^{1,1}\oplus F^{0,2}=F^2_{\Tot}(H,N,\Bbbk^\times).\]
The total differential has three components, namely
\[\deri_{\Tot}^2(\epsilon \oplus \kappa)=\deri^{1,1}(\epsilon)\oplus \frac{\deri^{0,2}(\kappa)}{\partial^{1,1}(\epsilon)}\oplus\partial^{0,2}(\kappa).\]
Explicit formulas for the components are derived from
\begin{gather*}
\deri^{1,1}\epsilon(h_1,h_2,n_1)=\frac{ \epsilon\big(h_1,h_2n_1h_2^{-1}\big)\epsilon(h_2,n_1)}{\epsilon(h_1h_2,n_1)},\\
\partial^{1,1}\epsilon(h_1,n_1,n_2)=\frac{\epsilon(h_1,n_1)\epsilon(h_1,n_2)}{\epsilon(h_1,n_1n_2)}, \qquad
\deri^{0,2}\kappa(h_1,n_1,n_2)=\frac{\kappa\big(h_1n_1h_1^{-1},h_1n_2h_1^{-1}\big)}{\kappa(n_1,n_2)},\\
\partial^{0,2}\kappa(n_1,n_2,n_3)=\frac{\kappa(n_1,n_1)\kappa(n_1n_2,n_3)}{\kappa(n_1,n_2n_3)\kappa(n_2,n_3)}.
\end{gather*}
\end{Example}

\subsection*{Acknowledgements}

S.H.\ is supported by Engineering and Physical Sciences Research Council.
R.L.\ was supported by a Nottingham Research Fellowship.
 A.R.C.\ is supported by Cardiff University. The authors would like to especially thank the anonymous referees for their helpful comments and suggestions.

\pdfbookmark[1]{References}{ref}
\LastPageEnding


\begin{thebibliography}{99}
\footnotesize\itemsep=0pt

\bibitem{AA}
Aguiar M., Mahajan S., Monoidal functors, species and {H}opf algebras,
 \textit{CRM Monogr. Ser.}, Vol.~29, \href{https://doi.org/10.1090/crmm/029}{American Mathematical Society},
 Providence, RI, 2010.

\bibitem{BDSV}
Bartlett B., Douglas C.L., Schommer-Pries C.J., Vicary J., Modular categories
 as representations of the 3-dimensional bordism 2-category,
 \href{https://arxiv.org/abs/math.AT/1509.06811}{arXiv:math.AT/1509.06811}.

\bibitem{Ben}
Benson D.J., Representations and cohomology.~{I}. {B}asic representation theory
 of finite groups and associative algebras, \textit{Cambridge Stud. Adv.
 Math.}, Vol.~30, Cambridge University Press, Cambridge, 1998.

\bibitem{Bro}
Brown K.S., Cohomology of groups, \textit{Grad. Texts Math.}, Vol.~87,
 \href{https://doi.org/10.1007/978-1-4684-9327-6}{Springer}, New York, 1982.

\bibitem{BN11}
Brugui\`eres A., Natale S., Exact sequences of tensor categories, \href{https://doi.org/10.1093/imrn/rnq294}{\textit{Int.
 Math. Res. Not.}} \textbf{2011} (2011), 5644--5705, \href{https://arxiv.org/abs/1006.0569}{arXiv:1006.0569}.

\bibitem{CKM}
Creutzig T., Kanade S., McRae R., Tensor categories for vertex operator
 superalgebra extensions, \href{https://arxiv.org/abs/1705.05017}{arXiv:1705.05017}.

\bibitem{Dav3}
Davydov A., Modular invariants for group-theoretical modular data.~{I},
 \href{https://doi.org/10.1016/j.jalgebra.2009.11.041}{\textit{J.~Algebra}} \textbf{323} (2010), 1321--1348, \href{https://arxiv.org/abs/0908.1044}{arXiv:0908.1044}.

\bibitem{DMNO}
Davydov A., M\"uger M., Nikshych D., Ostrik V., The {W}itt group of
 non-degenerate braided fusion categories, \href{https://doi.org/10.1515/crelle.2012.014}{\textit{J.~Reine Angew. Math.}}
 \textbf{677} (2013), 135--177, \href{https://arxiv.org/abs/1009.2117}{arXiv:1009.2117}.

\bibitem{DS}
Davydov A., Simmons D., On {L}agrangian algebras in group-theoretical braided
 fusion categories, \href{https://doi.org/10.1016/j.jalgebra.2016.09.016}{\textit{J.~Algebra}} \textbf{471} (2017), 149--175,
 \href{https://arxiv.org/abs/1603.04650}{arXiv:1603.04650}.

\bibitem{DGGPR}
De~Renzi M., Gainutdinov A.M., Geer N., Patureau-Mirand B., Runkel I.,
 3-{D}imensional {TQFT}s from non-semisimple modular categories,
 \href{https://doi.org/10.1007/s00029-021-00737-z}{\textit{Selecta Math.~(N.S.)}} \textbf{28} (2022), 42, 60~pages,
 \href{https://arxiv.org/abs/1912.02063}{arXiv:1912.02063}.

\bibitem{DPR}
Dijkgraaf R., Pasquier V., Roche P., Quasi {H}opf algebras, group cohomology
 and orbifold models, \href{https://doi.org/10.1016/0920-5632(91)90123-V}{\textit{Nuclear Phys.~B Proc. Suppl.}} \textbf{18}
 (1991), 60--72.

\bibitem{dWP}
Dijkgraaf R., Pasquier V., Roche P., Topological interactions in broken gauge
 theories, \href{https://arxiv.org/abs/hep-th/9511195}{arXiv:hep-th/9511195}.

\bibitem{DL}
Dong C., Lepowsky J., Generalized vertex algebras and relative vertex
 operators, \href{https://doi.org/10.1007/978-1-4612-0353-7}{\textit{Progr. Math.}}, Vol. 112, Birkh\"{a}user, Boston, MA, 1993.

\bibitem{DongRenXu}
Dong C., Ren L., Xu F., On orbifold theory, \href{https://doi.org/10.1016/j.aim.2017.09.032}{\textit{Adv. Math.}} \textbf{321}
 (2017), 1--30, \href{https://arxiv.org/abs/1507.03306}{arXiv:1507.03306}.

\bibitem{EGNO}
Etingof P., Gelaki S., Nikshych D., Ostrik V., Tensor categories, \textit{Math.
 Surveys Monogr.}, Vol. 205, \href{https://doi.org/10.1090/surv/205}{American Mathematical Society}, Providence, RI,
 2015.

\bibitem{FHL}
Flake J., Harman N., Laugwitz R., The indecomposable objects in the center of
 {D}eligne's category {$\underline{\rm Re}{\rm p}S_t$}, \href{https://doi.org/10.1112/plms.12509}{\textit{Proc. Lond.
 Math. Soc.}} \textbf{126} (2023), 1134--1181, \href{https://arxiv.org/abs/2105.10492}{arXiv:2105.10492}.

\bibitem{FFRS}
Fr\"ohlich J., Fuchs J., Runkel I., Schweigert C., Correspondences of ribbon
 categories, \href{https://doi.org/10.1016/j.aim.2005.04.007}{\textit{Adv. Math.}} \textbf{199} (2006), 192--329,
 \href{https://arxiv.org/abs/math.CT/0309465}{arXiv:math.CT/0309465}.

\bibitem{FRS}
Fuchs J., Runkel I., Schweigert C., {TFT} construction of {RCFT}
 correlators.~{I}: partition functions, \href{https://doi.org/10.1016/S0550-3213(02)00744-7}{\textit{Nuclear Phys.~B}} \textbf{646}
 (2002), 353--497, \href{https://arxiv.org/abs/hep-th/0204148}{arXiv:hep-th/0204148}.

\bibitem{FSS}
Fuchs J., Schaumann G., Schweigert C., A modular functor from state sums for
 finite tensor categories and their bimodules, \textit{Theory Appl. Categ.}
 \textbf{38} (2022), 436--594, \href{https://arxiv.org/abs/1911.06214}{arXiv:1911.06214}.

\bibitem{FS}
Fuchs J., Stigner C., On {F}robenius algebras in rigid monoidal categories,
 \textit{Arab.~J.~Sci. Eng. Sect. C Theme Issues} \textbf{33} (2008),
 175--191, \href{https://arxiv.org/abs/0901.4886}{arXiv:0901.4886}.

\bibitem{HS}
Hochschild G., Serre J.-P., Cohomology of group extensions, \href{https://doi.org/10.2307/1990851}{\textit{Trans. Amer.
 Math. Soc.}} \textbf{74} (1953), 110--134.

\bibitem{Hua}
Huang Y.-Z., Rigidity and modularity of vertex tensor categories,
 \href{https://doi.org/10.1142/S0219199708003083}{\textit{Commun. Contemp. Math.}} \textbf{10} (2008), 871--911,
 \href{https://arxiv.org/abs/math.QA/0502533}{arXiv:math.QA/0502533}.

\bibitem{HKL}
Huang Y.Z., Kirillov Jr. A., Lepowsky J., Braided tensor categories and
 extens-ions of vertex operator algebras, \href{https://doi.org/10.1007/s00220-015-2292-1}{\textit{Comm. Math. Phys.}}
 \textbf{337} (2015), 1143--1159, \href{https://arxiv.org/abs/1406.3420}{arXiv:1406.3420}.

\bibitem{HLZ}
Huang Y.-Z., Lepowsky J., Zhang L., Logarithmic tensor category theory,~{VIII}:
 {B}raided tensor category structure on categories of generalized modules for
 a conformal vertex algebra, in Conformal {F}ield {T}heories and {T}ensor
 {C}ategories, \textit{Math. Lect. Peking Univ.}, Springer, Heidelberg, 2014, 169--248,
 \href{https://arxiv.org/abs/1110.1931}{arXiv:1110.1931}.

\bibitem{KL}
Kerler T., Lyubashenko V.V., Non-semisimple topological quantum field theories
 for 3-manifolds with corners, \textit{Lecture Notes in Math.}, Vol.~1765,
 \href{https://doi.org/10.1007/b82618}{Springer}, Berlin, 2001.

\bibitem{KO}
Kirillov Jr. A., Ostrik V., On a {$q$}-{A}nalogue of the {M}c{K}ay
 correspondence and the {ADE} classification of {$\mathfrak{sl}_2$} conformal
 field theories, \href{https://doi.org/10.1006/aima.2002.2072}{\textit{Adv. Math.}} \textbf{171} (2002), 183--227,
 \href{https://arxiv.org/abs/math.QA/0101219}{arXiv:math.QA/0101219}.

\bibitem{LW2}
Laugwitz R., Walton C., Constructing non-semisimple modular categories with
 relative monoidal centers, \href{https://doi.org/10.1093/imrn/rnab097}{\textit{Int. Math. Res. Not.}} \textbf{2022}
 (2022), 15826--15868, \href{https://arxiv.org/abs/2010.11872}{arXiv:2010.11872}.

\bibitem{LW3}
Laugwitz R., Walton C., Constructing non-semisimple modular categories with
 local modules, \href{https://doi.org/10.1007/s00220-023-04824-4}{\textit{Comm. Math. Phys.}}, {t}o appear, \href{https://arxiv.org/abs/2202.08644}{arXiv:2202.08644}.

\bibitem{LMSS}
Lentner S., Mierach S.N., Schweigert C., Sommerhaeuser Y., Hochschild
 cohomology, modular tensor categories, and mapping class groups~{I},
 \textit{SpringerBriefs Math. Phys.}, Vol.~44, \href{https://doi.org/10.1007/978-981-19-4645-5}{Springer}, Berlin, 2023.

\bibitem{Len}
Lentner S.D., Quantum groups and {N}ichols algebras acting on conformal field
 theories, \href{https://doi.org/10.1016/j.aim.2020.107517}{\textit{Adv. Math.}} \textbf{378} (2021), 107517, 71~pages,
 \href{https://arxiv.org/abs/1702.06431}{arXiv:1702.06431}.

\bibitem{Maj98}
Majid S., Quantum double for quasi-{H}opf algebras, \href{https://doi.org/10.1023/A:1007450123281}{\textit{Lett. Math. Phys.}}
 \textbf{45} (1998), 1--9, \href{https://arxiv.org/abs/q-alg/9701002}{arXiv:q-alg/9701002}.

\bibitem{McRae}
McRae R., Twisted modules and {$G$}-equivariantization in logarithmic conformal
 field theory, \href{https://doi.org/10.1007/s00220-020-03882-2}{\textit{Comm. Math. Phys.}} \textbf{383} (2021), 1939--2019,
 \href{https://arxiv.org/abs/1910.13226}{arXiv:1910.13226}.

\bibitem{Moeller}
M\"oller S., A cyclic orbifold theory for holomorphic vertex operator algebras
 and applications, Ph.D.~Thesis, {T}echnische Universit{\"a}t Darmstadt, 2016,
 \href{https://arxiv.org/abs/1611.09843}{arXiv:1611.09843}.

\bibitem{WINART}
Morales Y., M\"uller M., Plavnik J., Ros~Camacho A., Tabiri A., Walton C.,
 Algebraic structures in group-theoretical fusion categories, \href{https://doi.org/10.1007/s10468-022-10186-7}{\textit{Algebr.
 Represent. Theory}}, {t}o appear, \href{https://arxiv.org/abs/2001.03837}{arXiv:2001.03837}.

\bibitem{Nat}
Natale S., On the equivalence of module categories over a~group-theoretical
 fusion category, \href{https://doi.org/10.3842/SIGMA.2017.042}{\textit{SIGMA}} \textbf{13} (2017), 042, 9~pages,
 \href{https://arxiv.org/abs/1608.04435}{arXiv:1608.04435}.

\bibitem{Ost}
Ostrik V., Module categories over the {D}rinfeld double of a finite group,
 \href{https://doi.org/10.1155/S1073792803205079}{\textit{Int. Math. Res. Not.}} \textbf{2003} (2003), 1507--1520,
 \href{https://arxiv.org/abs/math.QA/0111139}{arXiv:math.QA/0111139}.

\bibitem{Par}
Pareigis B., On braiding and dyslexia, \href{https://doi.org/10.1006/jabr.1995.1019}{\textit{J.~Algebra}} \textbf{171} (1995),
 413--425.

\bibitem{Rad}
Radford D.E., Hopf algebras, \textit{Ser. Knots Everything}, Vol.~49, \href{https://doi.org/10.1142/8055}{World
 Scientific Publishing}, Hackensack, NJ, 2012.

\bibitem{RT91}
Reshetikhin N., Turaev V.G., Invariants of {$3$}-manifolds via link polynomials
 and quantum groups, \href{https://doi.org/10.1007/BF01239527}{\textit{Invent. Math.}} \textbf{103} (1991), 547--597.

\bibitem{Sch}
Schauenburg P., The monoidal center construction and bimodules, \href{https://doi.org/10.1016/S0022-4049(00)00040-2}{\textit{J.~Pure
 Appl. Algebra}} \textbf{158} (2001), 325--346.

\bibitem{SW}
Schweigert C., Woike L., Homotopy coherent mapping class group actions and
 excision for {H}ochschild complexes of modular categories, \href{https://doi.org/10.1016/j.aim.2021.107814}{\textit{Adv.
 Math.}} \textbf{386} (2021), 107814, 55~pages, \href{https://arxiv.org/abs/2004.14343}{arXiv:2004.14343}.

\bibitem{Shi1}
Shimizu K., Non-degeneracy conditions for braided finite tensor categories,
 \href{https://doi.org/10.1016/j.aim.2019.106778}{\textit{Adv. Math.}} \textbf{355} (2019), 106778, 36~pages,
 \href{https://arxiv.org/abs/1602.06534}{arXiv:1602.06534}.

\bibitem{Shi2}
Shimizu K., Ribbon structures of the {D}rinfeld center of a finite tensor
 category, \href{https://doi.org/10.2996/kmj46106}{\textit{Kodai Math. J.}} \textbf{46} (2023), 75--114,
 \href{https://arxiv.org/abs/1707.09691}{arXiv:1707.09691}.

\bibitem{Tur}
Turaev V.G., Quantum invariants of knots and 3-manifolds, \textit{{D}e Gruyter
 Stud. Math.}, Vol.~18, \href{https://doi.org/10.1515/9783110435221}{{D}e Gruyter}, Berlin, 1994.

\bibitem{Wei}
Weibel C.A., An introduction to homological algebra, \textit{Cambridge Stud.
 Adv. Math.}, Vol.~38, \href{https://doi.org/10.1017/CBO9781139644136}{Cambridge University Press}, Cambridge, 1994.

\bibitem{Wil}
Willerton S., The twisted {D}rinfeld double of a finite group via gerbes and
 finite groupoids, \href{https://doi.org/10.2140/agt.2008.8.1419}{\textit{Algebr. Geom. Topol.}} \textbf{8} (2008),
 1419--1457, \href{https://arxiv.org/abs/math.QA/0503266}{arXiv:math.QA/0503266}.

\bibitem{Yad}
Yadav H., Frobenius monoidal functors from (co){H}opf adjunctions,
 \href{https://arxiv.org/abs/2209.15606}{arXiv:2209.15606}.

\end{thebibliography}
\end{document}